\definecolor{azure}{rgb}{0.0, 0.5, 1.0}
\definecolor{awesome}{rgb}{1.0, 0.13, 0.32}
\tikzset{
commutative diagrams/.cd,
arrow style=tikz,
diagrams={>=latex}} 
\setlist[itemize]{noitemsep, nolistsep}
\setlist[enumerate]{noitemsep, nolistsep}
\newtheorem{theorem}{Theorem}[section]
\newtheorem{proposition}[theorem]{Proposition}
\newtheorem{lm}[theorem]{Lemma}
\newtheorem{corollary}[theorem]{Corollary}
\theoremstyle{definition}
\newtheorem{definition}[theorem]{Definition}
\newtheorem{example}[theorem]{Example}
\theoremstyle{remark}
\newcommand{\m}{\mathfrak{m}}
\newcommand{\C}{\mathbb{C}}
\DeclareMathOperator\codim{codim}
\newcommand{\Or}{\mathcal{O}}
\newcommand{\Jac}{\mathrm{Jac}}
\newcommand{\cid}{\mathrm{cid}}
\newcommand{\ord}{\mathrm{ord}}
\title [The complete intersection discrepancy of a curve II: Families of curves]{The complete intersection discrepancy of a curve II: Families of curves}
\author{Andrei Bengu\textcommabelow{s}-Lasnier}
\author{Terence Gaffney}
\author{Antoni Rangachev}
\begin{document}

\pagestyle{fancy}
\fancyhf{}
\fancyhead[CE]{\textsc{Andrei Bengu\textcommabelow s-Lasnier,
Terence Gaffney and Antoni Rangachev}}
\fancyhead[CO]{\textsc{The complete intersection discrepancy of a curve II}}
\fancyhead[LE,RO]{\thepage}


\address{Andrei Bengu\textcommabelow s-Lasnier\\
Institute of Mathematics and Informatics\\
Bulgarian Academy of Sciences\\
Akad. G. Bonchev, Sofia 1113, Bulgaria \\
email:abengus@math.bas.bg}

\address{Terence Gaffney\\
Department of Mathematics\\
Northeastern University\\
Boston, MA 02115, United States \\
email: t.gaffney@northeastern.edu}

\address{Antoni Rangachev\\
Institut de Math\'ematiques de Jussieu-Paris Rive Gauche \\
Centre national de la recherche scientifique (CNRS) \\
Paris, France\\
Institute of Mathematics and Informatics\\
Bulgarian Academy of Sciences\\
email: rangachev@imj-prg.fr}

\begin{abstract}
We study equisingularity of families of reduced curves over smooth parameter spaces of arbitrary positive dimension, using the difference between two analytic invariants of a curve singularity: the multiplicity of its Jacobian ideal and its complete intersection discrepancy. This difference provides a fiberwise multiplicity criterion for Whitney equisingularity. We prove that Whitney equisingularity (equivalently, strong simultaneous resolution) is characterized by equidimensionality of the fibers of the exceptional locus of either the relative conormal space or the relative Nash blowup. We further show that this condition is equivalent to the emptiness of the relative polar variety of smallest dimension. In addition, we establish that the Milnor number of a reduced curve is Zariski upper semicontinuous. As an application, we show that the constancy of the Milnor number in a family of reduced curves is equivalent to its topological equisingularity.
\end{abstract}

\subjclass[2010]{32S15, 32S30, 32S60, 14C17, 13H15}
\keywords{Complete intersection discrepancy, Milnor number, delta invariant, Jacobian ideal, relative conormal space, relative Nash blowup, polar varieties, integral closure, Hilbert--Samuel and Buchsbaum--Rim multiplicities, Euler characteristic, Zariski upper semicontinuity, linkage, equisingularity}

\maketitle

\tableofcontents

\section{Introduction}
Let $(X_0,0) \subset (\mathbb{C}^n,0)$ be a reduced complex analytic curve singularity. Let us first introduce the two analytic invariants of $(X_0,0)$ at play. Denote by $\mathrm{Jac}(X_0)$ the Jacobian ideal of $(X_0,0)$ in $\mathcal{O}_{X_0,0}$; this is the first Fitting ideal of $\Omega_{X_0,0}^1$. Denote by $e(\mathrm{Jac}(X_0))$ the {\it Hilbert--Samuel (HS) multiplicity} of $\mathrm{Jac}(X_0)$. The HS multiplicity of $\mathrm{Jac}(X_0)$ is the colength of the ideal generated by a general $\mathbb{C}$-linear combination of the $(n-1) \times (n-1)$ minors of the Jacobian matrix of $X_0$, obtained from any set of defining equations for $X_0$.

Let $(Z_0,0) \subset (\mathbb{C}^n,0)$ be a complete intersection curve defined by $n-1$ general $\mathbb{C}$-linear combinations of a set of defining equations for $(X_0,0)$. Then $(Z_0,0)$ contains $(X_0,0)$ as an analytic subset, and $(Z_0,0)$ is reduced (see \cite[Sct.\ 6]{BR24}). Set $W_0:=\overline{Z_0\setminus X_0}$. 

\begin{definition}
Define the {\it complete intersection discrepancy} of $X_0$ at $0$  as the intersection number 
$$\mathrm{cid}(X_0,0):=I_{0}(X_0,W_0)=\dim_{\mathbb{C}} \mathcal{O}_{\mathbb{C}^n,0}/(I_{X_0}+I_{W_0})$$ where $I_{X_0}$ and $I_{W_0}$ are the ideals of $X_0$ and $W_0$ in $\mathcal{O}_{\mathbb{C}^n,0}$, respectively. In \cite{BR24} it is shown that $\mathrm{cid}(X_0,0)$ is an intrinsic invariant of $(X_0,0)$.
\end{definition}



In Thm.\ \ref{constancy} we show that the complete intersection discrepancy behaves well under flat deformations of $(X_0,0)$ over a smooth base. When $(X_0,0)$ is smoothable - that is, when $(X_0,0)$ admits a flat deformation to a smooth curve - in Cor.\ \ref{computing cid} we show how to compute it without explicitly determining the ideal $I_{W_0}$. In \cite{BR24} we use the complete intersection discrepancy as a  correction term to obtain a generalization of a well-known formula in singularity theory due to L\^e, Greuel and Teissier for complete intersection curves, as well as a generalization of the classical genus-degree formula for projective plane curves. Further applications appear in \cite{PR25}, where $\cid$ is used to give a new proof of
the uniqueness of the Milnor fiber of a smoothable curve up to diffeomorphism, and in
\cite{RT24}, where it is used to generalize the Plücker formula for plane curves to
curves of arbitrary codimension.

One of the main results of this paper determines how the difference between the multiplicity of the Jacobian ideal and the complete intersection discrepancy changes as one moves from a given fiber to a nearby fiber in a flat family of curves.

\textbf{Family setup.} Suppose $h \colon (X,0) \rightarrow (Y,0)$ is a flat family of reduced curves such that 
$h$ factors through the composition of maps 
\begin{tikzcd}[column sep=small]
(X,0) \arrow[hookrightarrow]{r} & (\mathbb{C}^n,0) \times (Y,0) \ar[r,"pr_2"] & (Y,0)
\end{tikzcd}
with $(h^{-1}(0),0)=(X_0,0)$ (such a family is called a {{\it flat embedded deformation} of $X_0$). Assume  $Y$ is smooth. Often we will assume that $(X_0,0)$ is contained in a small open $n$-ball $\mathring{B_0}$, $(Y,0)$ is identified with an open ball in $\mathbb{C}^k$ centered at $0$, sufficiently small with respect to $B_0$, and we will view $X$ as an analytic subset of $B_0 \times Y$. 

Let $(Z,0) \rightarrow (Y,0)$ be a flat family of complete intersection curves in $(\mathbb{C}^n,0)$ defined by $n-1$ general $\mathbb{C}$-linear combinations of a set of defining equations for $(X,0)$. Define $W:=\overline{Z \setminus X}$. In Sct.\ \ref{main proof} it is shown that for such $Z$ the  intersection number $I_x(X_y,W_y)$ is an intrinsic invariant of $(X_y,x)$ for each  $y\in (Y,0)$ and $x \in (X_y)_{\mathrm{sing}}$ which equals $\mathrm{cid}(X_y,x)$. Set 

\[e(\mathrm{Jac}(X_y)):=
\sum_{\mathclap{x \in (X_y)_{\mathrm{sing}}}}
e(\mathrm{Jac}(X_y,x))
\quad
\text{and}
\quad
\mathrm{cid}(X_y):=
\sum_{\mathclap{x \in (X_y)_{\mathrm{sing}}}}
\mathrm{cid}(X_y,x).\]
One readily sees that  $\mathrm{cid}(X_y)=0$ if and only if $X_y$ is a local complete intersection.


Assume $(Y,0)$ is of dimension $k$. Consider a linear functional $F$ on $\mathbb{C}^n$ that defines a hyperplane that is not a limiting tangent hyperplane of $X_{0}$ at $0$. Define $\Gamma_{h}^{k}(X)$, the {\it relative polar variety of dimension $k$}, as the closure of the set of critical points of $F$ restricted to the smooth part of the fibers of $h$ (see Sct.\ \ref{polar} for an intersection-theoretic definition). When $Y$ is of dimension one, $\Gamma_{h}^{1}(X)$ is referred to as the {\it relative polar curve}.  Because $(X,0)$ is reduced, $\Gamma_{h}^{k}(X)$ is reduced and so for generic $y \neq 0$, $\Gamma_{h}^{k}(X)_y$ consists of finitely many reduced points.
Denote their number by $\mathrm{deg}_{Y}  \Gamma_{h}^{k}(X)$.

\begin{theorem}\label{main mpt} We have
\begin{equation}\label{mpt-cid}
e(\mathrm{Jac}(X_0)) - \mathrm{cid}(X_0)-\bigl( e(\mathrm{Jac}(X_y))-\mathrm{cid}(X_y)\bigl)=\mathrm{deg}_{Y}  \Gamma_{h}^{k}(X)
\end{equation} 
for $y \neq 0$ in a Zariski open subset of $Y$.
\end{theorem}

The proof of this theorem uses the Buchsbaum--Rim multiplicity and integral closures of modules, the constancy of $y  \mapsto \sum_{x \in X_y \cap W_y}I(X_y,W_y)$ and the Multiplicity--Polar theorem (Thm. \ref{MPT}).


Let $\nu:\overline{(X_0,0)}\to (X_0,0)$ be the normalization morphism. The number $r_0:=|\nu^{-1}(0)|$ is the \textit{number of branches of $X_0$ at} $0$. Define the \textit{delta invariant} of $X_0$ at $0$ as $\delta_{0}:=
\dim_\mathbb{C}\nu_*\mathcal{O}_{\overline{X_0,0}}/\mathcal{O}_{X_0,0}$. Denote by $\mu_0$ the Milnor number of $(X_0,0)$ as defined by Buchweitz and Greuel
in \cite{BG80}. By \cite[Prop.\ 1.2.1]{BG80}, $\mu_0=2\delta_0-r_0+1$, a formula discovered for plane curves by Milnor. Denote by $m_0$ the multiplicity of $X_0$ at $0$.  
Using Grothendieck duality theory the first and the third author derived in \cite[Thm.\ 3.9]{BR24}  the following  Teissier--L\^e--Greuel type formula
\begin{equation}\label{Milnor}
e(\mathrm{Jac}(X_0)) - \mathrm{cid}(X_0)= \mu_0+ m_0 -1.
\end{equation}
When $(X_0,0)$ is a complete intersection, then $\mathrm{cid}(X_0)=0$.
Thus (\ref{Milnor}) recovers a formula derived by Teissier for plane curves \cite[Prop.\ II.1.2]{T73} and by L\^e \cite{Le74} and Greuel \cite{Greuel73} for complete intersection curves. As a direct application of Thm.\ \ref{main mpt} in Cor.\ \ref{main}, we give a new proof of (\ref{Milnor}) for smoothable curves.  It is worth noting that when $(X_0,0)$ is Gorenstein, $e(\mathrm{Jac}(X_0)) - \mathrm{cid}(X_0)$ is the degree of the exceptional divisor of the Nash blowup of $(X_0,0)$ as demonstrated in \cite[Cor.\ 4.3]{BR24}.

By  \cite[Dfn.\ 3.1 and Cor.\ 3.2]{NT08} identity (\ref{Milnor}) gives an algebraic formula for the Milnor number $\mu(F|X_0)$ of a linear function $F: (\mathbb{C}^n,0) \rightarrow (\mathbb{C},0)$ on $X_0$. The Milnor numbers $\mu_0$ and $\mu(F|X_0)$ are calculated using the normalization of $(X_0,0)$, which in general may be hard to find, while (\ref{Milnor}) gives a way to compute $\mu_0$ directly from the defining equations of $(X_0,0) \subset (\mathbb{C}^n,0)$. Moreover, we use (\ref{mpt-cid}), (\ref{Milnor}) and a Riemann--Hurwitz formula for singular curves (Prop.\ \ref{prop:Euler_char}) to give in Thm.\ \ref{ZU of Milnor} a simple proof of the Zariski upper semicontinuity of the two Milnor numbers generalizing a result of Buchweitz and Greuel 
for one-parameter families ($\dim Y=1$).


The main application of Thm.\ \ref{main mpt} in this paper gives a multiplicity characterization of Whitney equisingularity of families of reduced curves over smooth parameter spaces of arbitrary positive dimension using conormal geometry.  

\textbf{Family setup with a section.}
Suppose $h \colon (X,0) \rightarrow (Y,0)$ is an embedded flat deformation of $(X_0,0) \subset (\mathbb{C}^n,0)$ with $(Y,0)$ smooth of dimension $k$. Suppose there exists a section $\sigma: (Y,0) \rightarrow (X,0)$. Identify $Y$ with the image of $\sigma$. Choose an embedding of $(X,0)$ in $\mathbb{C}^{n+k}=\mathbb{C}^{n} \times \mathbb{C}^{k}$, so that $(Y,0)$ is represented by $0 \times U$, where $0 \in \mathbb{C}^n$ and $U$ is an open neighborhood of $0$ in $\mathbb{C}^k$.

\textbf{Whitney equisingularity.}  In the above setup, let $\{x_i\}_{i \in \mathbb{N}}$ be a sequence of points from $X_{\mathrm{sm}}$ and $\{y_i\}_{i \in \mathbb{N}}$ be a sequence of points from $Y$ both converging to $0$. Consider the secant lines $\overline{x_{i}y_{i}}$, the tangent spaces  $T_{x_i}X$ and their limits $$l:=\lim_{i \rightarrow \infty}\overline{x_{i}y_{i}} \ \ \text{and} \ \ T:=\lim_{i\rightarrow \infty}T_{x_i}X$$ in the corresponding Grassmannians, which exist after taking subsequences.  We say that $(X,0) \rightarrow (Y,0)$ is {\it Whitney equisingular}, or that the pair $(X_{\mathrm{sm}},Y)$ satisfies Whitney condition B, if $l \subset T$. Equivalently, one can replace tangent spaces $T_{x_i}X$ by {\it tangent hyperplanes} $H_{x_i}$ (hyperplanes in $\mathbb{C}^{n+k}$ that contain $T_{x_i}X$) and require that $l$ is contained in their limits in $\check{\mathbb{P}}^{n+k-1}$ (see \cite{HM83}).

\textbf{Strong simultaneous resolution.}
Denote by $\nu:\overline{X}\to (X,0)$ the normalization morphism.  We say that $(X,0) \rightarrow (Y,0)$ admits {\it strong simultaneous resolution} if the fibers $\overline{X}_y$ are smooth and $\nu^{-1}(Y) \rightarrow (Y,0)$ is locally analytically trivial. 

In \cite[Prop.\ 4, p.\ 111]{T80} Teissier
proved that for families of hypersurfaces the existence of strong simultaneous resolution implies Whitney equisingularity. In \cite[Sct.\ 5, p.\ 115]{T80} he showed that the converse holds for families of plane curves. 

The definition of Whitney equisingularity can be turned into a geometric condition using integral closures of ideals and modules. Define the {\it relative conormal space} $C_{h}(X)$ of $X$ as the closure in $X \times \check{\mathbb{P}}^{n-1}$ of the set of pairs $(x,H)$ where $x$ is a smooth point of $X_{h(x)}$ and $H$ is a (tangent) hyperplane in $\mathbb{C}^n$ at $x$ containing $T_{x}X_{h(x)}$. Let $c_h: C_{h}(X) \rightarrow X$ be the structure morphism. Let $D$ be the exceptional divisor of $\mathrm{Bl}_{c_h^{-1}(Y)}C_{h}(X)$. Then Teissier and Gaffney (\cite{Teissier} and \cite{Gaf1}) showed that $(X,0) \rightarrow (Y,0)$ is  Whitney equisingular if the fibers $D \rightarrow Y$ are equidimensional. We show that for families of curves it suffices to control only the equidimensionality of the fibers of $c_h$. Concretely, 
$(X,0) \rightarrow (Y,0)$ is a Whitney equisingular family of reduced curves if and only if the fibers of $c_h^{-1}(Y) \rightarrow Y$ are equidimensional and $\bigcup_{y \in Y}(X_y)_{\mathrm{sing}}=Y$. 

Closely related to the relative conormal space is {\it the relative Nash blowup} $b_h:N_h(X) \rightarrow X$ defined as the closure in $X \times \mathrm{Gr}(1,n)$ of the pairs $(x,T_xX_{h(h)})$ where $x$ is a smooth point in $X_{h(x)}$ and $T_{x}X_{h(x)}$ is the tangent space to $X_{h(x)}$ at $x$. In Prop.\ \ref{vertical} we show that when $\dim Y=1$, the right-hand side of (\ref{mpt-cid}) is also equal to the degree of the vertical part of the exceptional divisor of $N_h(X)$. As an application of Thm.\ \ref{main mpt}, we obtain the following result. 
\begin{theorem}\label{Whitney}
The following are equivalent.
\begin{enumerate}
    \item[\rm{(i)}] The function $y \mapsto e(\mathrm{Jac}(X_y,0)) - \mathrm{cid}(X_y,0)$ is constant across $(Y,0)$.
    
    \item[\rm{(ii)}] For each $y \in Y$ the only singular point of $X_y$ is $0$ and $\Gamma_{h}^k(X)$ is empty (the last is equivalent to asking that $c_{h}^{-1}(Y) \rightarrow Y$ and $b_{h}^{-1}(Y) \rightarrow Y$ have equidimensional fibers). 
    \item[\rm{(iii)}] For each $y \in Y$ the only singular point of $X_y$ is $0$ and the family $h: (X,0) \rightarrow (Y,0)$ is  Whitney equisingular.

    \item[\rm{(iv)}]For each $y \in Y$ the only singular point of $X_y$ is $0$ and the family $h: (X,0) \rightarrow (Y,0)$ admits strong simultaneous resolution.
    \end{enumerate}
\end{theorem}

In Prop.\ \ref{jacobian mult.} we show that 
the constancy of $y \mapsto e(\mathrm{Jac}(X_y,0))$ implies $\rm{(iii)}$, however, the converse is not true, as demonstrated by an example.

Thm.\ \ref{ZU of Milnor} and \cite[Thm.\ 3.9]{BR24} imply that \rm{(i)} is equivalent to the constancy of the Milnor number $\mu(X_y,0)$ and the multiplicity $m(X_y,0)$ of $X_y$ at $0$. For one-parameter families of reduced curves it is shown in \cite[Thm.\ III.3]{BGG} that $(X,0) \rightarrow (Y,0)$ is Whitney equisingular if and only if $\mu (X_y,0)$ and $m(X_y,0)$ are constant along $Y$. The proof of this result uses in an essential way results of Buchweitz and Greuel, such as \cite[Thm.\ 4.2.2]{BG80}, which have been known for one-parameter families only. Extending \cite[Thm.\ III.3]{BGG} and \cite[Thms.\ 4.2.2]{BG80} to the case $\dim Y > 1$ appears to have remained an open problem (see \cite[Sct.\ 10.2, p.\ 270]{Gre17}).

Our proof of Thm.\ \ref{Whitney} is algebro-geometric, which allows us to work with $Y$ of arbitrary positive dimension. First, we establish the equivalence of $\rm{(i)}$ and $\rm{(ii)}$ via Thm.\ \ref{main  mpt}. In Thm.\ \ref{equimult} we show that $\rm{(ii)}$ implies that $X$ is equimultiple along $Y$, i.e.\ $y \mapsto m_y$ is constant where $m_y$ is the multiplicity of $X_y$ at $0$. Then using the characterization of Whitney equisingularity in terms of integral closure of modules we showed that $\rm{(ii)}$ implies $\rm{(iii)}$. In Thm.\ \ref{equimult} we also show that $\rm{(ii)}$ implies that the delta invariant and the number of branches of $(X_y,0)$ are constant as $y$ varies and thus one obtains that  $\rm{(ii)}$ implies  $\rm{(iv)}$. Finally,  we show in a straightforward fashion that $\rm{(iv)}$ yields  the constancy of the delta invariant, the number of branches and the multiplicity of $(X_y,0)$ which by \cite[Thm.\ 3.9]{BR24} gives $\rm{(i)}$. 

The main application of Thm.\ \ref{ZU of Milnor} (Zariski upper semicontinuity) is a fiberwise numerical characterization of topological equisingularity. Recall the following definition.

\textbf{Topological equisingularity.} Suppose $h \colon (X,0) \rightarrow (Y,0)$ is a flat family of reduced curves such that 
$h$ factors through \begin{tikzcd}[column sep=small]
(X,0) \arrow[hookrightarrow]{r} & (\mathbb{C}^n,0) \times (Y,0) \ar[r,"pr_2"] & (Y,0).
\end{tikzcd} We say that $h: (X,0) \rightarrow (Y,0)$ is {\it topologically equisingular} if there exists a homeomorphism $\phi:(X,0) \rightarrow (X_0,0) \times (Y,0)$ such that $h=pr_2 \circ \phi$. 


\begin{theorem}\label{top. equis.}
Set $\mu_y:=\sum_{x \in (X_y)_{\mathrm{sing}}}\mu(X_y,x)$. Suppose $y \mapsto \mu_y$ is constant.
Assume that either the fibers $X_y$ are irreducible, or that all irreducible components of each $X_y$ intersect at a single point and have no other points of intersection. Then $h: (X,0) \rightarrow (Y,0)$ is topologically equisingular after possibly shrinking $Y$. Conversely, if  $h:(X,0) \rightarrow (Y,0)$ is topologically equisingular, then $y \mapsto \mu_y$ is constant.
\end{theorem}

Note that the assumptions on $X_y$ are necessary for the family to be topologically equisingular.
Our result generalizes a result of Buchweitz and Greuel (see \cite[Thm.\ 5.2.2]{BG80}) for one-parameter families of curves in two directions. In our treatment, we do not need to assume the existence of a section from $Y$ to $X$ that parametrizes the singular points of the fibers $X_y$ (see Sct.\ \ref{top. equising.}). We also do not impose a restriction on the dimension of the parameter space $Y$.

\textbf{Acknowledgements.} The authors would like to thank Pablo Portilla Cuadrado, Steven Kleiman, and Bernard Teissier for helpful and stimulating discussions. The first author
was supported by the Bulgarian Ministry of Education and Science Scientific Programme
"Enhancing the Research Capacity in Mathematical Sciences (PIKOM)", No.
DO1-67/05.05.2022. The third author was supported by the European Union’s Horizon Europe research and innovation programme under Marie Sk\l{adowska}-Curie Actions project GTSP-$101111114$.

\section{Conormal spaces, polar varieties, and multiplicities}
\subsection{Conormal spaces}\label{Conormal spaces}
Let $h \colon (X,0) \rightarrow (Y,0)$ be a flat embedded deformation of a reduced complex analytic curve $(X_0,0) \subset \mathbb{C}^n$, with $(Y,0)$ smooth of dimension $k$, and $(X,0) \subset (\mathbb{C}^{n+k},0)$. As is conventional when dealing with germs, we will consider affine representatives. First, we describe the {\it relative conormal variety} $C_{h}(X)$ of $X$ in $\mathbb{C}^{n+k}$ using the relative Jacobian module of $X$. Suppose $X$ is reduced. Let $X$ be defined by the vanishing of analytic
functions $f_1, \ldots, f_p$ on a Euclidean neighborhood of $0$ in $\mathbb{C}^{n+k}=\mathbb{C}^n \times Y$. Consider the following conormal exact  sequence:
\begin{equation}\label{normal sequence}
\begin{tikzcd}
I/I^2\ar[r,"\delta"] &  \Omega_{\mathbb{C}^{n}\times Y / Y}^{1}|X \ar[r] &
\Omega_{X/Y}^{1} \ar[r] & 0
\end{tikzcd}
\end{equation}
where $I$ is the ideal of $X$ in $\mathcal{O}_{\mathbb{C}^{n}\times Y,0}$ and the map $\delta$ sends a function $f$ vanishing on $X$ to its differential $df$. Dualizing, we obtain the following  nested sequence of torsion-free sheaves:
\begin{displaymath}
\mathrm{Image}(\delta^{*}) \subset (\mathrm{Image} \ \delta)^{*} \subset (I/I^{2})^{*}.
\end{displaymath}
Observe that locally the sheaf $\mathrm{Image}(\delta^{*})$ can be viewed as the column space of the {\it relative Jacobian matrix} of $X$. If $({\bf x}, {\bf y}):=(x_1, \ldots, x_n, y_1, \ldots, y_k)$ are coordinates on $(\mathbb{C}^{n+k},0)$, where $(y_1, \ldots, y_k)$ are coordinates on $(Y,0)$, then the relative Jacobian matrix is simply the $p\times n$ matrix $(\partial f_{i}({\bf x},{\bf y}) / \partial x_{j})$.

Denote $\mathrm{Image}(\delta^{*})$ by $J_{h}(X)$ and call it {\it the relative Jacobian module} of $X$. Because $\mathcal{O}_{X,0}^p \twoheadrightarrow I/I^2$, we have $(I/I^{2})^{*} \subset \mathcal{O}_{X,0}^p$. So $J_{h}(X)$ is contained in $\mathcal{O}_{X,0}^{p}$. Define the {\it Rees algebra} $\mathcal{R}(J_{h}(X))$ to be the subalgebra of $\mathrm{Sym}(\mathcal{O}_{X,0}^{p})$ generated by $J_{h}(X)$ placed in degree $1$. Define the {\it relative conormal space} $C_{h}(X)$ of $X$ to be the closure in $X \times \check{\mathbb{P}}^{n-1}$ of the set of pairs $(x,H)$ where $x$ is a smooth point of the fiber $X_{h(x)}$ and $H$ is a (tangent) hyperplane in $\mathbb{C}^n$ at $x$ containing $T_{x}X_{h(x)}$. We have 
\begin{displaymath}
 C_{h}(X) = \mathrm{Projan}(\mathcal{R}(J_{h}(X))).
\end{displaymath}
Indeed, observe that both sides are equal over the smooth part of the fibers $(X_y)_\mathrm{sm}$ to the set of pairs $(x,H)$ where $H$ is a tangent hyperplane  in $\mathbb{C}^n$ to the simple point $x \in (X_y)_\mathrm{sm}$. The left-hand side is the closure of this set, and so is the right-hand side, simply because the Rees algebra is by construction a subalgebra of the symmetric algebra $\mathrm{Sym}(\mathcal{O}_{X,0}^{p})$.

\subsection{Polar varieties and the Milnor number}\label{polar}
We preserve the setup from the previous section. Consider the following diagram:
\[\begin{tikzcd}
C_{h}(X)\ar[r,hook] \ar[dr,"\lambda"'] \ar[d,"c_h"]
    & X\times\check{\mathbb{P}}^{n-1} \ar[d,"pr_2"] \\
X & \check{\mathbb{P}}^{n-1}.
\end{tikzcd}\]

The dimension of the generic fiber of $c_h$ is $n-2$, so $\dim C_{h}(X)=\dim X+n-2=n+k-1$.  By Kleiman's transversality theorem \cite{Kl74}
(taking the transitive action $PGL(n,\mathbb{C})$ on $\check{\mathbb{P}}^{n-1}$, considered as
the set of hyperplanes of $\mathbb{C}^n$), for a generic point
$H\in\check{\mathbb{P}}^{n-1}$, the pullback
$\lambda^{-1}(H)$ is either empty or of pure dimension equal to $\dim C_{h}(X)-\text{codim}(H)=
n+k-1-(n-1)=k$. Define the {\it relative polar variety of 
$h \colon (X,0) \rightarrow (Y,0)$ of  dimension $k$} as
$\Gamma_{h}^{k}(X):=c_h(\lambda^{-1}(H))$. 
When $\dim Y=1$, $\Gamma_{h}^{1}(X)$ is called the relative polar curve. Again by Thm.\ 2 \rm{(ii)} and Rmk.\ 7 in \cite{Kl74}, choosing $H$ generic enough guarantees that $\Gamma_{h}^{k}(X)$ is reduced, because by assumption $X$ is reduced and so is $C_h(X)$. An equivalent way to obtain $\Gamma_{h}^{k}(X)$ is to consider a linear functional $F$ on $\mathbb{C}^n$ such that $F^{-1}(0)$ is a hyperplane that is not a limiting tangent hyperplane to $X_{0}$ at $0$. Then $\Gamma_{h}^{k}(X)$ is the closure of the set of critical points of $F$ restricted to the smooth part of the fibers of $h$ (\cite[Prop.\ 3.4]{Flores}).
Denote by $\mathrm{deg}_{Y}  \Gamma_{h}^{k}(X)$ the number of points in the fiber $\Gamma_h^{k}(X)_y$ for $y \neq 0$ generic. Denote by $m_0$ the multiplicity of $(X_0,0)$.

In \cite{BG80} Buchweitz and Greuel define the Milnor number of $(X_0,0)$ as follows. Let $n \colon \overline{(X_0,0)} \rightarrow (X_0,0)$ be the normalization morphism. Denote by $\omega_{X_0,0}$ the dualizing module of Grothendieck. Denote by $d$ the composition of $\Omega_{X_0,0}^{1}\rightarrow n_{*}\Omega_{\overline{X_0,0}}^1 \cong n_{*}\omega_{\overline{X_0,0}} \rightarrow \omega_{X_0,0}$ and the exterior derivation $\mathcal{O}_{X_0,0} \rightarrow \Omega_{X_0,0}^{1}$. Then the Milnor number $\mu$ of $(X_0,0)$ is defined as 
$$\mu_0 = \mu(X_0,0):=\dim_{\mathbb{C}}(\omega_{X_0,0}/d\mathcal{O}_{X_0,0}).$$ When $(X_0,0)$ is smoothable, Bassein \cite{Bas77} proved that $\mu_0$ equals the first Betti number of the smooth nearby fiber; in particular, for complete intersection curves $\mu_0$
coincides with the usual Milnor number.

In the family setup, we choose representatives for $(X,0)$ and $(Y,0)$ as follows. Assume $(X_0,0)$ is contained in an open ball $\mathring{B_0}=\mathring{B}(0,\epsilon)\subset(\C^n,0)$. Identify $Y$ with an open $k$-ball. View $(X,0)$ as analytic subspace of $\mathring{B_0} \times (Y,0)$.  Write $\chi(X_y)$ for the Euler characteristic of the Euclidean closure of $X_y$ in $B_0 \times \{y\}$.

\begin{proposition}\label{Euler} Suppose $h \colon (X,0) \rightarrow (Y,0)$ is an embedded flat deformation of a reduced curve $(X_0,0) \subset (\mathbb{C}^n,0)$ such that $X_y$ is smooth for all $y$ in a Zariski open subset $Y$. Let $F \colon (\mathbb{C}^n,0) \rightarrow (\mathbb{C},0)$ be a general linear functional. For $\mathring{B_0}$ and $(Y,0)$ sufficiently small we have the following  Euler characteristic--degree relation
\begin{equation}\label{Le}
\mathrm{deg}_{Y}  \Gamma_{h}^{k}(X) = \chi(X_y \cap F^{-1}(c))-\chi(X_y) = \mu_0 + m_0-1
\end{equation}
where $y$ is in a Zariski open subset $U$ of $Y$ and $c \in (\mathbb{C},0)$ is generic. Moreover, $1-\chi(X_y) \geq 0$ for $y \in U$ with equality if and only if $X_0$ is smooth, which is equivalent to $\Gamma_{h}^{k}(X)=\emptyset$. 

\end{proposition}
\begin{proof}
Let $(\mathbf{x},\mathbf{y}):=(x_1,\ldots,x_n,y_1, \ldots, y_k)$ be coordinates for $\mathring{B_0}\times (Y,0)$. Set $F(\mathbf{x}):= \sum_{i=1}^{n} \alpha_i x_i$ where the $\alpha_i$ are complex numbers subject to finitely many genericity conditions specified below. Define $\tilde{F}: \mathring{B_0} \times (Y,0) \rightarrow (\mathbb{C},0) \times (Y,0)$ as $\tilde{F}(\mathbf{x},\mathbf{y})=(F(\mathbf{x}),\mathbf{y})$. Choose sufficiently small representatives for $\mathring{B_0}$,  $(Y,0)$ and $(\mathbb{C},0)$ so that the restriction of $\tilde{F}$ to $X$ is a finite map (see \cite[Prop.\ 1), p. 63]{GrR84}). Denote by  $\Sigma_{X_y}(F)$ the critical locus of $F$ on $X_y$. Denote by $\Sigma_{X/Y}(\tilde{F})$ the union of  $\Sigma_{X_y}(F)$ as $y$ varies through $Y$. This set has an analytic structure given by $\mathbb{V}(\mathrm{Fitt}_{1}(\mathcal{O}_{X,0}^{p+1}/J_{h}(X,\tilde{F})))$ where $J_h(X,\tilde{F})$ is the augmented relative Jacobian module obtained by augmenting the relative Jacobian matrix of $(X,0) \rightarrow (Y,0)$ with the gradient of $\tilde{F}$. The locus $\Sigma_{X/Y}$ where $h: (X,0) \rightarrow (Y,0)$ is not smooth is given by $\mathbb{V}(\mathrm{Fitt}_{1}(\mathcal{O}_{X,0}^{p}/J_{h}(X)))$. By \cite[Prop.\ 3.4]{Flores} $\Gamma_{h}^{k}(X)$ is $\overline{\Sigma_{X/Y}(\tilde{F}) \setminus \Sigma_{X/Y}}$. Thus $\mathrm{deg}_{Y} \Gamma_{h}^{k}(X)= \#(\Sigma_{X_y}(F))$ for generic $y \in U$.

Choose $\epsilon$ small enough so that 
$\partial B_0$ intersects $X_0$ transversally; by openness of transversality after possibly shrinking $Y$, we have that $\partial B_0 \times \{y\}$ intersects $X_y$ transversally. Also, for $\epsilon$ small enough we have $(\mathring{B_0}\times \{0\}) \cap \Gamma_{h}^{k}(X)=\{0\}$.
Thus, for $y$ close enough to $0$, the functional $F_{|X_y}$ does not have critical points on $S_{\epsilon}^{2n-1}$. After possibly shrinking $U$ we can assume that this is the case for all $y \in U$. By Sard's theorem and Ehresmann's fibration theorem, we can assume that $X \times_{Y} U \rightarrow U$, where $X$ is identified with its Euclidean closure in $B_0 \times (Y,0)$, is a trivial fibration after further shrinking $U$ if necessary. In particular, $y \mapsto \chi(X_y)$ is constant for $y \in U$. Bertini's theorem and Ehresmann's fibration theorem imply similarly that $y \mapsto \chi(F^{-1}(c) \cap X_y)$ is constant for $y \in U$ after possibly shrinking $U$. Fix $y \in U$. By  \cite[Thm.\ A.5]{NOT13} (cf.\ \cite[Thm.\ 1.4]{Kav04})
\begin{equation}\label{Kaveh}
\#(\Sigma_{X_y}(F))=\chi(F^{-1}(c) \cap X_y)-\chi(X_y) \ \ c \in \mathbb{C} \ \text{generic}
\end{equation}
where $\alpha_i$ are generic so that $F|_{X_y}$ has Morse critical points. By \cite{Bas77} (cf.\ \cite[Cor.\ 4.2.3]{BG80}) $\mu_0 = 1-\chi(X_y)$. Therefore, it remains to show that $$\chi(F^{-1}(c) \cap X_y)=m_0.$$

Set $f:=\tilde{F}|X$ and $f_y:=F|_{X_{y}}$. Because $(X_0,0)$ is a Cohen--Macaulay curve, for generic $\alpha_i$ we have $m_0=\dim_{\mathbb{C}} \mathcal{O}_{X_0,0}/f_0$. Because $h$ is flat and $X_0$ is reduced, then $(y_1, \ldots, y_k,f)$ is a regular sequence in $\mathcal{O}_{X,0}$, and so is $(f,y_1, \ldots, y_k)$, because $\mathcal{O}_{X,0}$ is local. Our choices for $\mathring{B_0}$ and $(Y,0)$ ensure that $\mathcal{O}_{X,0}/f$ is a finite $\mathcal{O}_{Y,0}$-module.
By \cite[Thm.\ 18.16 (b) (Miracle Flatness)]{Eis95} we have $\dim_{\mathbb{C}} \mathcal{O}_{X_0,0}/f_0=\dim_{\mathbb{C}} \mathcal{O}_{X_y}/f_y.$
 
Fix $y \in Y$. Consider the trivial family $X_y \times (\mathbb{C},0) \rightarrow (\mathbb{C},0)$, where for $(\mathbb{C},0)$ we choose the representative fixed in the definition of $\tilde{F}$. We have $X_y \times (\mathbb{C},0) \subset \mathring{B_0} \times (\mathbb{C},0)$. Let $(x_1, \ldots,x_n,t)$ be coordinates on $\mathring{B_0} \times (\mathbb{C},0)$. View $g_t: = \sum_{i=1}^n \alpha_{i}x_i - t$ as an element in $\mathcal{O}_{X_y \times (\mathbb{C},0)}$. Set  $\mathbb{V}(g_t):=\mathrm{Specan}(\mathcal{O}_{X_y \times (\mathbb{C},0)}/(g_t))$. Because $X_y \times (\mathbb{C},0)$ is Cohen--Macaulay, so is $\mathbb{V}(g_t)$. Because $f_y: X_y \rightarrow (\mathbb{C},0)$ is finite we have that $\mathcal{O}_{X_y \times (\mathbb{C},0)}/(g_t)=\mathcal{O}_{X_y}[t]/(g_t)=\mathcal{O}_{X_y}$ is a finite $\mathcal{O}_{\mathbb{C},0}$-module. By Bertini's theorem $\dim_{\mathbb{C}}\mathcal{O}_{X_y}/g_c=\#(f^{-1}(c))=\#(F^{-1}(c) \cap X_y)$ for $c \in (\mathbb{C},0)$ generic. Because  $\mathbb{V}(g_t)\rightarrow (\mathbb{C},0)$ is finite and flat, we get that 

$$\dim_{\mathbb{C}} \mathcal{O}_{X_y}/f_y=\dim_{\mathbb{C}}\mathcal{O}_{X_y}/g_0=\dim_{\mathbb{C}}\mathcal{O}_{X_y}/g_c=\# (F^{-1}(c) \cap X_y)$$
for $c$ close enough to $0$. But $\# (F^{-1}(c) \cap X_y)=\chi(F^{-1}(c) \cap X_y)$ for $y \in U$. Therefore, $\chi(F^{-1}(c) \cap X_y)=m_0$. This proves (\ref{Le}). 
Identify $X_y$ with its Euclidean closure in the closed ball $B_0$. Denote by $g(X_y)$ the genus of $X_y$ for $y \in U$. By \cite[Thm.\ 8.2]{Gre17} and because $X \times_U U\rightarrow U$ is a topological fibration, we have that $X_y$ is a connected manifold. By Ehresmann's fibration theorem applied to the boundary $(\partial B_0 \times (Y,0)) \cap X \rightarrow (Y,0)$ we get that $X_y$ has $r_0$ boundary components, where $r_0$ is the number of branches of $X_0$. Thus $1-\chi(X_y)=2g(X_y)+r_0-1 \geq 0$ because $r_0 \geq 1$. We have $1-\chi(X_y)=0$ if and only if $g(X_y)=0$, $r_0=1$ ($X_y$ is a disk). Thus $1-\chi(X_y)=0$ is equivalent to $\delta_0=0$, where $\delta_0$ is the delta invariant of $(X_0,0)$, which is equivalent to  $X_0$ smooth. But $X_0$ smooth is equivalent to $m_0=1$. So by (\ref{Le}) $X_0$ smooth is equivalent to $\Gamma_{h}^{k}(X)=\emptyset$.
\end{proof}
Note that the proof above establishes $\mathrm{deg}_{Y}  \Gamma_{h}^{k}(X) = (-1)^{d+1}\chi(F^{-1}(c)\cap X_y)+(-1)^{d}\chi(X_y)$ for a smoothable $(X_0,0)$  of dimension $d$. In the case of curves considered in this paper, instead of (\ref{Kaveh}) one can apply 
Prop.\ \ref{prop:Euler_char} to get directly $\#(\Sigma_{X_y}(F))=m_0-\chi(X_y)$. Also, instead of applying Ehresmann's theorem twice to obtain the constancy of $\chi(X_y)$ and $\chi(F^{-1}(c) \cap X_y)$, one can use Prop.\ \ref{relative Morse} to ensure that $F_{|X_{y}}$ has Morse critical points for all $ y\in U$, after possibly shrinking $U$, in which case (\ref{Kaveh}) will hold for all $y \in U$.

Denote by $\mu(F|X_0)$ the Milnor number of $F|X_0$ (see \cite[Dfn.\ 2.1]{MvS01}). Suppose $k=1$. The equality between $\mathrm{deg}_{Y}  \Gamma_{h}^{1}(X)$ and $\mu_0 + m_0-1$ for curves  can be obtained by observing that $\mathrm{deg}_{Y}  \Gamma_{h}^{1}(X)=\mu(F|X_0)$ by \cite[Prop.\ 2.2]{MvS01} and $\mu(F|X_0)=\mu_0 + m_0-1$ by  \cite[Dfn.\ 3.1 and Cor.\ 3.2]{NT08}.


\subsection{Integral closure and multiplicities}
Let $(V,0)$ be a reduced complex analytic variety of pure dimension $d$. Let $M$ be an $\mathcal{O}_V$-module such that its rank at the generic point of each irreducible component of $V$ is $e$. Assume $M$ is contained in a free module $\mathcal{O}_{V}^p$ for some $p$. Denote by $[M] \in \mathrm{Mat}(p \times c, \mathcal{O}_V)$ a presentation matrix of $\mathcal{O}_{V}^p/M$. Let $A\in\text{Mat}(e\times p,\mathbb{C})$ be a generic matrix such that $[M_e]:=A[M]$ has rank $e$ at the generic point of each irreducible component of $(V,0)$. Let $M_e$ be the $\mathcal{O}_V$-module generated by the columns of $[M_e]$. Then $M_e$ is contained in a free module $\mathcal{O}_{V}^e$. Denote by $\mathcal{R}(M)$ and $\mathcal{R}(M_e)$ the Rees algebras of $M$ and $M_e$, respectively.
\begin{proposition} \label{conormal}
We have
$$\mathrm{Projan}(\mathcal{R}(M))=\mathrm{Projan}(\mathcal{R}(M_e)).$$
\end{proposition}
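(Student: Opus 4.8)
The plan is to exhibit both sides as closed subschemes of one and the same product $V\times\mathbb{P}^{c-1}$ and then to prove that the two subschemes coincide. Let $v_1,\dots,v_c\in\mathcal{O}_V^p$ denote the columns of $[M]$; they generate $M$, so $\mathcal{R}(M)$ is generated in degree one by $v_1,\dots,v_c$ and we get a graded surjection $\phi\colon\mathcal{O}_V[S_1,\dots,S_c]\twoheadrightarrow\mathcal{R}(M)$ with $\phi(S_j)=v_j$. Likewise the columns $Av_1,\dots,Av_c$ of $[M_e]=A[M]$ generate $M_e$ and give $\phi_e\colon\mathcal{O}_V[S_1,\dots,S_c]\twoheadrightarrow\mathcal{R}(M_e)$ with $\phi_e(S_j)=Av_j$. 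Through $\phi$ and $\phi_e$ the two spaces $\mathrm{Projan}(\mathcal{R}(M))$ and $\mathrm{Projan}(\mathcal{R}(M_e))$ become closed subschemes of $V\times\mathbb{P}^{c-1}=\mathrm{Projan}(\mathcal{O}_V[S_1,\dots,S_c])$ referred to the very same homogeneous coordinates $S_1,\dots,S_c$, so that the assertion is literally an equality of subschemes there.

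First I would settle the inclusion $\mathrm{Projan}(\mathcal{R}(M_e))\subseteq\mathrm{Projan}(\mathcal{R}(M))$. The constant matrix $A$ induces an $\mathcal{O}_V$-algebra map $\mathrm{Sym}(A)\colon\mathrm{Sym}(\mathcal{O}_V^p)\to\mathrm{Sym}(\mathcal{O}_V^e)$ which in degree one is $w\mapsto Aw$; it sends $v_j$ to $Av_j$, hence restricts to a graded surjection $\mathcal{R}(M)\twoheadrightarrow\mathcal{R}(M_e)$ and satisfies $\phi_e=\mathrm{Sym}(A)\circ\phi$. Consequently $\ker\phi\subseteq\ker\phi_e$, which is exactly the claimed inclusion of closed subschemes of $V\times\mathbb{P}^{c-1}$.

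The substance of the proposition is the reverse inclusion, i.e.\ that $\mathrm{Sym}(A)$ does not collapse $\mathcal{R}(M)$, equivalently $\ker\phi_e\subseteq\ker\phi$. Here I would combine a generic computation with a dimension argument. At the generic point $\eta$ of each irreducible component of $V$ both $M$ and $M_e$ are free of rank $e$ (for $M_e$ this is the standing hypothesis that $A[M]$ has rank $e$ there), so $\mathcal{R}(M)_\eta$ and $\mathcal{R}(M_e)_\eta$ are polynomial algebras on $e$ variables and the genericity of $A$ makes $A\colon M_\eta\to(M_e)_\eta$ an isomorphism; thus $\mathrm{Sym}(A)$ is an isomorphism over $\eta$ and the closed immersion $\mathrm{Projan}(\mathcal{R}(M_e))\hookrightarrow\mathrm{Projan}(\mathcal{R}(M))$ restricts to an isomorphism of the two $\mathbb{P}^{e-1}$-fibres over each such $\eta$. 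To turn this generic statement into a global one I would use that $V$ is reduced: $\mathcal{R}(M)$ is an $\mathcal{O}_V$-subalgebra of the $\mathcal{O}_V$-free algebra $\mathrm{Sym}(\mathcal{O}_V^p)$, hence torsion-free over $\mathcal{O}_V$, so every minimal prime of $\mathcal{R}(M)$ contracts to a minimal prime of $\mathcal{O}_V$ and no component of $\mathrm{Projan}(\mathcal{R}(M))$ is contracted to a proper subvariety of $V$. Since the fibre over $\eta$ is the irreducible $\mathbb{P}^{e-1}$, there is exactly one component of $\mathrm{Projan}(\mathcal{R}(M))$ over each component of $V$; the generic point of each of these lies over some $\eta$, where $\mathrm{Projan}(\mathcal{R}(M_e))$ and $\mathrm{Projan}(\mathcal{R}(M))$ already agree, so the closed subscheme $\mathrm{Projan}(\mathcal{R}(M_e))$ contains every generic point of the reduced scheme $\mathrm{Projan}(\mathcal{R}(M))$. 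A reduced closed subscheme containing all the generic points is the whole scheme, giving $\ker\phi_e\subseteq\ker\phi$ and hence the equality.

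The main obstacle is exactly this reverse inclusion: one must rule out that the generic projection $A$ creates new relations among the generators, i.e.\ that $\mathrm{Projan}(\mathcal{R}(M_e))$ is a proper closed subscheme. The rank hypothesis on $A[M]$ is what forces $\mathrm{Sym}(A)$ to be a fibrewise isomorphism over the generic points, while reducedness of $V$, through torsion-freeness of $\mathcal{R}(M)$ over $\mathcal{O}_V$, is what guarantees there are no vertical components to spoil the density of those fibres; together they upgrade the generic isomorphism to a global one.
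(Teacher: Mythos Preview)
Your proof is correct and follows essentially the same approach as the paper: both argue that the two $\mathrm{Projan}$'s agree over the dense open locus where $[M]$ has rank $e$, and then invoke reducedness to pass to the closure. The paper does this in three sentences (identifying the fibre over that open set with the projectivized row space and asserting that agreement on a dense open implies equality), whereas you spell out more carefully why neither side can have vertical components---via torsion-freeness of $\mathcal{R}(M)\subset\mathrm{Sym}(\mathcal{O}_V^p)$ over the reduced base---which is exactly the justification the paper leaves implicit; your final clause ``giving $\ker\phi_e\subseteq\ker\phi$'' slightly overstates what the Proj-level argument yields, but the Projan equality is all that is claimed.
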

\begin{proof}
By construction the row spaces of  $[M]$ and of $[M_e]$ agree on the Zariski open dense subset of $V$ where the rank of these matrices is $e$. Over this Zariski open set the spaces $\mathrm{Projan}(\mathcal{R}(M))$ and $\mathrm{Projan}(\mathcal{R}(M_e))$
are just the projectivization of these row spaces, hence they are equal. Since the spaces agree on a Zariski open dense set, they agree everywhere.
\end{proof}

A submodule $M'$ of $M$ is called a {\it reduction} of $M$ if $\mathcal{R}(M)$ is an integral extension of $\mathcal{R}(M')$. By \cite[Cor.\ 16.4.7]{Huneke} there exists a reduction of $M$ generated by $d+e-1$ generic linear combinations of the columns of $[M]$.

To simplify notation set $\mathcal{I}_{0}(M_e):=\mathrm{Fitt}_{0}(\mathcal{O}_{V}^e/M_e)$ and $\mathcal{I}_{p-e}(M):= \mathrm{Fitt}_{p-e}(\mathcal{O}_{V}^p/M)$. These ideals are generated by the $e \times e$ minors of $[M_e]$ and $[M]$. By the Cauchy--Binet formula we have $\mathcal{I}_{0}(M_e) \subset \mathcal{I}_{p-e}(M)$. Denote by $\overline{\mathcal{I}_{0}(M_e)}$ and $\overline{\mathcal{I}_{p-e}(M)}$ the integral closures of  $\mathcal{I}_{0}(M_e)$ and $\mathcal{I}_{p-e}(M)$ in $\mathcal{O}_{V,0}$, respectively.

\begin{proposition}\label{int.clos.} Let $S_1, \ldots, S_q$ be irreducible analytic subsets in $(V,0)$ of codimension $1$. For a generic $A$ we have $S_i \not \in \mathrm{Supp}_{\mathcal{O}_{V,0}}(\overline{\mathcal{I}_{p-e}(M)}/\overline{\mathcal{I}_{0}(M_e)})$ for each $i=1, \ldots, q$.
\end{proposition}
\begin{proof} Denote by $M^*$ the $\mathcal{O}_{V,0}$-submodule of $\mathcal{O}_{X}^c$ generated by the columns of $[M]^{\mathrm{tr}}$. We have $\mathcal{I}_{p-e}(M)=\mathrm{Fitt}_{c-e}(\mathcal{O}_{X}^c/M^{*})$. Denote by $M_{e}^*$ the $\mathcal{O}_{V,0}$-submodule of $\mathcal{O}_{X}^c$ generated by the columns of $[M_e]^{\mathrm{tr}}$. Note that $M_{e}^{*}$ is a submodule of $M^{*}$ generated by $e$ elements. Moreover, $\mathcal{I}_{0}(M_e)=\mathrm{Fitt}_{c-e}(\mathcal{O}_{X}^c/M_{e}^{*})$. 

Fix a $S_i$ and consider the stalk $\mathcal{O}_{V,S_i}$ of $\mathcal{O}_{V,0}$ at the generic point of $S_i$.  Because the formation of integral closure and Fitting ideals commute with localization, it is enough to prove that $\mathcal{I}_{p-e}(M)$ and $\mathcal{I}_{0}(M_e)$ have the same integral closure after replacing  $\mathcal{O}_{V,0}$ by $\mathcal{O}_{V,S_i}$ and $M$ by $M \otimes_{\mathcal{O}_{V,0}} \mathcal{O}_{V,S_i}$. 
By assumption $\dim \mathcal{O}_{V,S_i}=1$. So, a generic $\mathbb{C}$-linear $e$ combinations of generators of $M \otimes_{\mathcal{O}_{V,0}} \mathcal{O}_{V,S_i}$ generate a reduction of the latter module. By \cite[Thm.\ 16.3.1]{Huneke}, for a generic $A$, the ideals $\mathcal{I}_{p-e}(M)$ and $\mathcal{I}_{0}(M_e)$ have the same integral closure in $\mathcal{O}_{V,S_i}$, because $M_{e}^{*} \otimes_{\mathcal{O}_{V,0}} \mathcal{O}_{V,S_i}$ is a reduction of $M \otimes_{\mathcal{O}_{V,0}} \mathcal{O}_{V,S_i}$. Because there are finitely many $S_i$, there are finitely many genericity conditions on $A$. The proof is complete.
\end{proof}

Suppose now that $(V,0)$ is a reduced curve. Set $F:=\mathcal{O}_{V}^e$. Then $\mathrm{Supp}_{V}(F/M_e)=\{0\}$. The inclusion $M_e \subset F$ induces an inclusion of the Rees algebra $\mathcal{R}(M_e)$ in the symmetric algebra $\mathrm{Sym}(F)$. Denote by $M_{e}^l$ and $F^l$ the $l$th graded components of 
$\mathcal{R}(M_e)$ and $\mathrm{Sym}(F)$, respectively. Then by results of \cite{Buch} for $l$ large enough 

$$\dim_{\mathbb{C}} F^l/M_{e}^l = e(M_e,F)l^e/e! + O(l^{e-1}).$$
The coefficient $e(M_e,F)$ is called the {\it Buchsbaum--Rim (BR) multiplicity} of the pair $(M_e,F)$. The BR-multiplicity generalizes the HS-multiplicity of ideals to modules.  By \cite[Cor.\ 16.4.7]{Huneke} there exists a reduction $M_e'$ of $M_e$ generated by $e$ generic linear combinations of the columns of $[M_e]$, or in other words for a generic matrix $B\in\text{Mat}(c\times e,\mathbb{C})$ such that the module generated by the columns of $[M_e]B$ is a reduction of $M_e$. Set $\mathcal{I}_{0}(M_{e}'):=\mathrm{Fitt}_{0}(F/M_{e}')$. Note that $\mathcal{I}_{0}(M_{e}')$ is the determinant of the $e \times e$ matrix $[M_{e}']$.

By \cite[Cor.\ 16.5.7]{Huneke} $e(M_e,F)=e(M_{e}',F)$. Because $(V,0)$ is Cohen--Macaulay by \cite[4.5, p.\ 223]{Buch} $e(M_{e}',F)=\dim_{\mathbb{C}}(\mathcal{O}_{V,0}/\mathcal{I}_{0}(M_{e}'))$. The Fitting ideal $\mathcal{I}_{p-e}(M)$ is generated by the $e \times e$ minors of $[M]$. It is an ideal in $\mathcal{O}_{V,0}$ primary to the maximal ideal; thus it has a well-defined HS-multiplicity $e(\mathcal{I}_{p-e}(M))$.

\begin{corollary}\label{jacobian ideal}
For a generic choice of $A$ we have $e(M_e,F)=e(\mathcal{I}_{p-e}(M)).$
\end{corollary} 
\begin{proof}
Applying Prop.\ \ref{int.clos.} with $S_1=\{0\}$, we get that $\overline{\mathcal{I}_{p-e}(M)}=\overline{\mathcal{I}_{0}(M_e)}$. Therefore, by the Rees criterion (\cite[Theorem 11.3.1]{Huneke}) we have an equality of Hilbert--Samuel multiplicities $e(\mathcal{I}_{p-e}(M))=e(\mathcal{I}_{0}(M_e))$. Because $M_{e}'$ is a reduction of $M_e$ the ideals $\mathcal{I}_{0}(M_e)$ and $\mathcal{I}_{0}(M_e')$ have the same integral closures. Thus $e(\mathcal{I}_{0}(M_e))=e(\mathcal{I}_{0}(M_e'))$. But $\mathcal{I}_{0}(M_e')$ is principal. So $e(\mathcal{I}_{0}(M_e'))=\dim_{\mathbb{C}}\mathcal{O}_{V,0}/\mathcal{I}_{0}(M_e').$ But $e(M_e,F)=\dim_{\mathbb{C}}\mathcal{O}_{V,0}/\mathcal{I}_{0}(M_e')$ which proves the result.
\end{proof}
To obtain the equality in Cor.\ \ref{jacobian ideal}, selecting $A$ so that Prop.\ \ref{conormal} holds is not enough as indicated in Ex.\ \ref{3,4,5}. Finally, we remark that $e(M_e,F)$ was considered in a more general setting in \cite[Sct.\ 7]{KT-Al}. To define  $e(M_e,F)$ from the point of view of Kleiman and Thorup, one intersects $\mathrm{Projan}(\mathrm{Sym}(\mathcal{O}_{V}^p))=V \times \mathbb{P}^{p-1}$ with $p-e$ general hyperplanes from $\mathbb{P}^{p-1}$, blowing up this intersection with the image of the ideal generated by $M$ in $\mathrm{Sym}(\mathcal{O}_{V}^p)$, and then deriving $e(M_e,F)$ as a sum of certain intersection numbers of the exceptional divisor. 

\section{A flatness result}

Let $(X_0,0) \subset (\mathbb{C}^n,0)$ be a complex analytic curve such that the origin is not an embedded point of $X_0$. Let $(X,0) \rightarrow (Y,0)$ be a flat deformation of $(X_0,0)$ with $(Y,0)$ smooth. Let $(Z,0) \rightarrow (Y,0)$ be a flat family of analytic curves such that $(Z,0)$ contains  $(X,0)$ as a closed analytic subspace. Assume that $(Z,0)$ is Gorenstein (e.g.\ a complete intersection). Further, assume that $X_y$ is a union of irreducible components of $Z_y$ for each $y \in Y$. Set $W:=\overline{Z\setminus X}$ and $S:=X \times_{Z} W$. Define $I(X_y,W_y):= \sum_{s_y \in S_y} I_{s_y}(X_y,W_y)$, where $I_{s_y}(X_y,W_y)$  is the intersection multiplicity of $X_y$ and $W_y$ at $s_y$, provided that $X_y$ and $W_y$ intersect.

\begin{theorem}\label{constancy} In the setup above, $S$ is flat over $Y$ and $W_y=\overline{Z_y\setminus X_y}$
for all $y$. In particular, the function $y\mapsto I(X_y,W_y)$ is constant for all $y$ sufficiently close to $0$. 
\end{theorem}
\begin{proof}
Denote by $I_X$ and $I_W$ the ideals of $X$ and $W$ in $\mathcal{O}_{Z,0}$. Denote by $\mathfrak{n}$ the maximal ideal of $\mathcal{O}_{Y,0}$. Identify $\mathfrak{n}$ with its image in $\mathcal{O}_{Z,0}$. We have $\mathcal{O}_{Z_0,0}=\mathcal{O}_{Z,0}/\mathfrak{n}\mathcal{O}_{Z,0}$. Denote by $I_{X_0}$ and $I_{W_0}$ the images of $I_X$ and $I_W$ in $\mathcal{O}_{Z_0,0}$, respectively. Set $W_{0}':=\overline{Z_0\setminus X_0}$. Note that the primary decompositions of $I_{X_0}$ and $I_{W_0'}$ provide a primary decomposition of $(0)$ in $\mathcal{O}_{Z_0,0}$ via the identity $(0)= I_{X_0} \cap I_{W_0'}$. 
Since $(Z,0) \rightarrow (Y,0)$ is flat, and $(Z,0)$ and $(Y,0)$ are Cohen--Macaulay, by \cite[Thm.\ 23.3]{Matsumura} we have $\mathrm{depth}(\mathcal{O}_{Z_0,0})=1$. Hence the associated primes of $\mathcal{O}_{Z_0,0}$ are all minimal. Thus, by prime avoidance, there exists $\overline{a} \in I_{X_0}$ such that $\overline{a}$ avoids the minimal primes of $I_{W_0'}$. So if $\bar{a}\bar{b}=0$ in $\mathcal{O}_{Z_0,0}$, then $\bar{b} \in I_{W_0'}$. But $I_XI_W=0$, so $I_{X_0}I_{W_0}=\bar{0}$ in $\mathcal{O}_{Z_0,0}$. Therefore, $I_{W_0} \subset I_{W_0'}$. We will prove that $S$ is flat over $Y$ and that the formation of $W$ specializes with passage to the fibers of $(X,0) \rightarrow (Y,0)$ by induction on $\dim Y$. 

Assume $\dim Y=1$. Set $\mathfrak{n}=(t)$. If $S$ is empty, there is nothing to prove, so suppose otherwise. Observe that the ideal of $(S,0)$ in $\mathcal{O}_{Z,0}$ is $I_X+I_W$. We will show that $t$ is a nonzerodivisor in $\mathcal{O}_{S,0}=\mathcal{O}_{Z,0}/I_X+I_W$. 
Suppose there exists $z \in \mathcal{O}_{Z,0}$ such that $tz=x+w$, with $x \in I_X$ and $w \in I_W$. Denote by $\bar{x}$ and $\bar{w}$ the images of $x$ and $w$ in $\mathcal{O}_{Z_0,0}$. We have $\bar{x}+\bar{w}=\bar{0}$. Note that $\bar{w} \in I_{W_0} \subset I_{W_0'}$ and $\bar{x}+\bar{w} \in I_{W_0'}$. So $\bar{x} \in I_{W_0'}$. But $\bar{x} \in I_{X_0}$, so $\bar{x}=\bar{0}$. Thus we can write $x=tx_1$ for some $x_1 \in \mathcal{O}_{Z,0}$. Because $X$ is flat over $Y$, we have that $t$ is a nonzerodivisor in $\mathcal{O}_{X,0}$. Hence $x_1 \in I_X$. Now $tz=x+w$ gives $t(z-x_1)=w$. Because $(Z,0) \rightarrow (Y,0)$ is flat, $t$ avoids the associated primes of $\mathcal{O}_{Z,0}$. Because $I_W$ is obtained from the primary decomposition of $(0)$ by removing $I_X$, we get that $t$ avoids the associated primes of $I_W$. Thus, there exists $w_1 \in I_W$ such that $z-x_1=w_1$, i.e.\ $z=x_1+w_1 \in I_X+I_W$. So, $t$ is a nonzerodivisor in $\mathcal{O}_{S,0}$ and hence $(S,0)$ is flat over $(Y,0)$.

Next we prove that the formation of $W$ specializes with passage to the fibers of $(X,0) \rightarrow (Y,0)$.
We will prove it below for $y=0$, i.e.\  we will show that $W_0=W_{0}'$. For generic $y \neq 0$ close enough to $0$ using results of \cite{Frisch} we can base change to the flat morphism $(X,S_y) \rightarrow (Y,y)$ and $(Z,S_y) \rightarrow (Y,y)$ observing that the depth of $X_y$ at $S_y$ is at least one and $Z_y$ is Gorenstein, and proceed as with the case $y=0$. We have already established above the inclusion $I_{W_0} \subset I_{W_0'}$. Thus to prove that $W_0=W_{0}'$ it is enough to show  $I_{W_0'} \subset I_{W_0}$.

The ring $\mathcal{O}_{X,0}$ is Cohen--Macaulay because $\mathrm{depth}(\mathcal{O}_{X_0,0})=1$ and $(X,0) \rightarrow (Y,0)$ is flat. By \cite[Thm.\ 21.23 (b) (Linkage)]{Eis95} $\mathcal{O}_{W,0}$ is Cohen--Macaulay because $\mathcal{O}_{Z,0}$ is Gorenstein. So $\mathrm{depth}(\mathcal{O}_{W,0}) =2$. Because $\mathcal{O}_{W,0}$ is a flat $\mathcal{O}_{Y,0}$-module, we have that $t$ is a nonzerodivisor in $\mathcal{O}_{W,0}$ and $\mathrm{depth}(\mathcal{O}_{W_0,0})=1$. In particular, the maximal ideal in $\mathcal{O}_{Z_0,0}$ is not an associated prime of $I_{W_0}$ because $\mathcal{O}_{W_0,0}=\mathcal{O}_{Z_0,0}/I_{W_0}$.  Observe that set-theoretically $Z_0 = X_0 \cup W_0$. But $S$ has no irreducible components in $Z_0$. This shows that $W_0$ and $W_{0}'$ are set-theoretically equal. By prime avoidance we can select $r \in I_{X}$ such that $r$ avoids the minimal primes of $I_W$ and such that its image $\bar{r}$ in $\mathcal{O}_{Z_0,0}$ avoids the associated primes of $I_{W_0}$. Thus $\bar{r}$ is a nonzerodivisor in $\mathcal{O}_{W_0,0}$. Therefore, $(t,r)$ is a regular sequence in $\mathcal{O}_{W,0}$ and so is $(r,t)$ because $\mathcal{O}_{W,0}$ is local. 

We claim that $t$ is a nonzerodivisor in $\mathcal{O}_{Z,0}/r\mathcal{O}_{Z,0}$. Suppose there exists $z \in \mathcal{O}_{Z,0}$ such that $tz \in r\mathcal{O}_{Z,0}$. Because $r \in I_X$ and $t$ is a nonzerodivisor in $\mathcal{O}_{X,0}$ we have $z \in I_X$. Also, we have $tz=0$ in $\mathcal{O}_{Z,0}/(r,I_{W})=\mathcal{O}_{W,0}/r\mathcal{O}_{W,0}$. Because $(r,t)$ is a regular sequence in $\mathcal{O}_{W,0}$, we get that $z=rz_1+w$ where $z_1 \in \mathcal{O}_{Z,0}$ and $w \in I_W$. But $z$ and $r$ belong to $I_X$. Thus $w \in I_X$. But $I_X \cap I_W = (0)$. Therefore, $w=0$ and so $z=rz_1$ which establishes our claim. 

Let $v \in \mathcal{O}_{Z,0}$ such that its image $\bar{v}$ in $\mathcal{O}_{Z_0,0}$ is in $I_{W_0'}$. By our choice of $r$ we have $\bar{r}\bar{v}=0$ in $\mathcal{O}_{Z_0,0}$. Thus $rv=tz_2$ for some $z_2 \in  \mathcal{O}_{Z,0}$. By what we have just proved above, there exists $z_3 \in  \mathcal{O}_{Z,0}$ such that $z_2=rz_3$. Hence $r(v-tz_3)=0$. By our choice of $r$ we must have $v-tz_3 \in I_W$. Thus $\bar{v} \in I_{W_0}$ and so $I_{W_0'} \subset I_{W_0}$, which finishes the proof. 

Assume $\dim Y>1$. Because $(X,0) \rightarrow (Y,0)$ and $(Z,0) \rightarrow (Y,0)$ are flat, by \cite[Thm.\ 18.16 (a)]{Eis95} we have $\mathrm{depth}(\mathfrak{n},\mathcal{O}_{X,0})= \mathrm{depth}(\mathfrak{n},\mathcal{O}_{Z,0})=\dim Y$. Again, because  $\mathrm{depth}(\mathcal{O}_{X_0,0})=1$ and $(X,0) \rightarrow (Y,0)$ is flat, we have that $\mathcal{O}_{X,0}$ is Cohen--Macaulay  and so is $\mathcal{O}_{W,0}$  because $\mathcal{O}_{Z,0}$ is Gorenstein (\cite[Thm.\ 21.23 (b)]{Eis95}).  Since $\dim (W,0) = \dim (Z,0)=\dim Y+1$, by Krull's height theorem $\dim (W_0) \geq 1$. But $W_0 \subset Z_0$ and $\dim Z_0=1$. So $\dim W_0=1$. Then \cite[Thm.\ 18.16 (b) (Miracle Flatness)]{Eis95}  implies that $(W,0) \rightarrow (Y,0)$ is flat. In particular, $\mathrm{depth}(\mathfrak{n},\mathcal{O}_{W,0})=\dim Y$. Therefore, the local cohomology modules $\mathrm{H}_{\mathfrak{n}}^{i}(\mathcal{O}_{X,0})$, $\mathrm{H}_{\mathfrak{n}}^{i}(\mathcal{O}_{W,0})$  and  $\mathrm{H}_{\mathfrak{n}}^{i}(\mathcal{O}_{Z,0})$ vanish for $i<\dim Y$. Consider the short exact sequence of finitely generated $\mathcal{O}_{Z,0}$-modules
$$0 \rightarrow \mathcal{O}_{Z,0} \rightarrow \mathcal{O}_{X,0} \oplus \mathcal{O}_{W,0} \rightarrow \mathcal{O}_{S,0} \rightarrow 0.$$
It yields an exact sequence on local cohomology
$$\mathrm{H}_{\mathfrak{n}}^{i}(\mathcal{O}_{X,0} \oplus \mathcal{O}_{W,0}) \rightarrow \mathrm{H}_{\mathfrak{n}}^{i}(\mathcal{O}_{S,0})\rightarrow \mathrm{H}_{\mathfrak{n}}^{i+1}(\mathcal{O}_{Z,0}).$$
Thus $\mathrm{H}_{\mathfrak{n}}^{i}(\mathcal{O}_{S,0})=0$ for all $i \leq \dim Y-2$. In particular, $\mathrm{depth}(\mathfrak{n},\mathcal{O}_{S,0})\geq 1$ because $\dim Y \geq 2$.
For $y_1 \in \mathfrak{n}$ set $Y_1=\mathrm{Spec}(\mathcal{O}_{Y,0}/(y_1))$. By prime avoidance, we can select $y_1$ with $y_{1} \not \in \mathfrak{n}^2$ so that $Y_1$ is smooth, $X_1:=X \times_{Y} Y_1$, $W_1:=W \times_{Y} Y_1$ are  Cohen--Macaulay,  $Z_1:=Z \times_{Y} Y_1$ is Gorenstein, and $y_1 \not \in \mathrm{z.div}(\mathcal{O}_{S,0})$. Thus $S_1=S \times_{Y} Y_1$ is of dimension strictly less than $\dim X_1=\dim W_1$, and so $X_1$ and $W_1$ do not share an irreducible component. As $X_1$,  $W_1$ and $Z_1$ are Cohen--Macaulay, the same argument as in the case $\dim Y=1$ shows that we can construct a regular sequence $(r,y_1)$ in $\mathcal{O}_{W,0}$ that is regular sequence in $\mathcal{O}_{Z,0}$ which yields that ${W_1}=\overline{Z_1\setminus X_1}$ in $\mathcal{O}_{Z_1,0}$. By induction, $S_1$ is flat over $Y_1$. Because $y_1 \not \in \mathrm{z.div}(\mathcal{O}_{S,0})$, by \cite[Cor.\ 6.9]{Eis95} $S$ is flat over $Y$. By induction, $(W_1)_{0}=\overline{(Z_1)_{0}\setminus (X_1)_{0}}$. But $X_{0}=(X_1)_{0}$, $W_{0}=(W_1)_{0}$ and $Z_{0}=(Z_1)_{0}$, so $W_{0}=\overline{Z_{0}\setminus X_{0}}$.

By construction $\dim S < \dim Z=\dim Y+1$. Thus $\dim S \leq \dim Y$. Because $(S,0) \rightarrow (Y,0)$ is flat, by \cite[\href{https://stacks.math.columbia.edu/tag/00R5}{Tag 00R5}]{Stacks} $\dim (S,0)=\dim(Y,0)$ and $(S,0)$ is Cohen--Macaulay. So $(S,0) \rightarrow (Y,0)$ is quasi-finite. Therefore, by taking sufficiently small representatives of the germs $(X,0)$ and $(Y,0)$ we can ensure that $(S,0) \rightarrow (Y,0)$ is finite (see \cite[Prop.\ 1), p. 63]{GrR84}). So $\mathcal{O}_{S,0}$ is a finite and a flat $\mathcal{O}_{Y,0}$-module. Therefore, $y \mapsto  I(X_y,W_y)$ is constant for all $y$ close to $0$.
\end{proof}

In our applications $(X_0,0)$ is a reduced curve defined in $(\mathbb{C}^n,0)$ by the vanishing of analytic functions $\tilde{f_1}, \ldots, \tilde{f_p}$.  Let $A \in \mathrm{Mat}((n-1)\times p,\mathbb{C})$ and let $(Z_0,0)\subset (\mathbb{C}^n,0)$ be the analytic set defined by the vanishing of $A(\tilde{f_1}, \ldots, \tilde{f_p})^{T}$. Obviously, $(Z_0,0)$ contains $(X_0,0)$ set-theoretically.  For a generic $A$ we show below in Prop.\ \ref{reducedness} that is $(Z_0,0)$ is a complete intersection which is reduced along $(X_0,0)$.

Fix such an $A$. Observe that any $(X,0) \rightarrow (Y,0)$ flat embedded deformation of $(X_0,0)$ with $(Y,0)$ smooth and $(X,0) \subset (\mathbb{C}^n\times Y,0)$ induces an embedded deformation on the complete intersection $(Z_0,0)$ which we denote by $(Z,0) \rightarrow (Y,0)$. Alternatively, if $(X,0)$ is cut out from $(\mathbb{C}^n\times Y,0)$ by $f_1, \ldots, f_p$, then $(Z,0)$ is cut out from  $(\mathbb{C}^n\times Y,0)$ by $A(f_1, \ldots, f_p)^{T}$.
One can readily check that $(Z,0)$ is reduced along $(X,0)$ because $(Z_0,0)$ is reduced along $(X_0,0)$ using the Jacobian criterion for smoothness, for example.
\begin{corollary}\label{computing cid} Suppose $(X,0) \rightarrow (Y,0)$ is a smoothing, i.e.\ $X_y$ is smooth for generic $y \neq 0$. Identify the Jacobian ideal $\mathrm{Jac}(Z_y)$ of $Z_y$ with its image in $\mathcal{O}_{X_y}$. Then $I_0(X_0,W_0)=\dim_{\mathbb{C}}\mathcal{O}_{X_y}/\mathrm{Jac}(Z_y).$
\end{corollary} 
\begin{proof} By Thm.\ \ref{constancy} $I_0(X_0,W_0)=I(X_y,W_y)$ and by \cite[Cor.\ 3.10]{BR24} we have $I(X_y,W_y)=\dim_{\mathbb{C}}\mathcal{O}_{X_y}/\mathrm{Jac}(Z_y)$.
\end{proof}


\begin{proposition}\label{reducedness}
Suppose $(X_0,0)$ is reduced and suppose $A$ is generic. Then $(Z_0,0)$ is a complete intersection which is reduced along $(X_0,0)$.
\end{proposition}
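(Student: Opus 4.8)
The plan is to prove the two assertions — that $(Z_0,0)$ is a complete intersection and that it is reduced along $(X_0,0)$ — separately, in each case by a dimension count on an incidence variety followed by a genericity argument over the parameter space $M:=\mathrm{Mat}((n-1)\times p,\mathbb{C})$ of matrices $A$. Throughout I would fix a small representative, so that $X_0\setminus\{0\}$ is smooth, and use that the $\tilde f_j$ generate $I_{X_0}$; hence $V(\tilde f_1,\dots,\tilde f_p)=X_0$ set-theoretically, every $\tilde f(x):=(\tilde f_1(x),\dots,\tilde f_p(x))$ with $x\notin X_0$ is nonzero, and $0\in Z_0$ for every $A$.

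For the complete intersection claim the point to establish is that $\dim Z_0=1$; once this is known, the $n-1$ functions $g_i=\sum_j A_{ij}\tilde f_j$ cut out exactly the codimension $n-1$ in the regular ring $\mathcal{O}_{\mathbb{C}^n,0}$, hence form a regular sequence and exhibit $(Z_0,0)$ as a complete intersection. To bound the dimension I would introduce, on a neighborhood $U$ of $0$, the incidence variety
$$\tilde Z':=\{(x,A)\in (U\setminus X_0)\times M : A\,\tilde f(x)=0\}.$$
For fixed $x\notin X_0$ the vector $\tilde f(x)$ is nonzero, so $A\tilde f(x)=0$ imposes $n-1$ independent linear conditions on $A$; thus the projection $\tilde Z'\to U\setminus X_0$ is a vector bundle of rank $(n-1)(p-1)$ over the connected manifold $U\setminus X_0$, making $\tilde Z'$ irreducible of dimension $n+(n-1)(p-1)=\dim M+1$. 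Projecting instead to $M$ and invoking the fiber–dimension theorem, the fiber over a generic $A$ — which is precisely $Z_0\setminus X_0$ — has dimension at most $1$: it is empty if the projection is not dominant, and otherwise of dimension $\dim\tilde Z'-\dim M=1$. Since $X_0\subset Z_0$ has dimension $1$ while $Z_0$ is cut out by $n-1$ equations in $\mathbb{C}^n$, I conclude $\dim Z_0=1$.

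For reducedness along $X_0$ I would argue with the Jacobian criterion at a general point of each branch. Write $X_0=\bigcup_{i=1}^{r}X_0^{(i)}$ for the decomposition into branches and choose a general, hence smooth, point $x_i$ on each $X_0^{(i)}$. There the $p\times n$ Jacobian $\mathrm{Jac}(\tilde f)(x_i)$ has rank $n-1$, because $X_0$ is smooth of codimension $n-1$ at $x_i$; let $V_{x_i}\subset\mathbb{C}^p$ be its $(n-1)$-dimensional column space. Since $\mathrm{Jac}(g)(x_i)=A\,\mathrm{Jac}(\tilde f)(x_i)$, the scheme $Z_0$ is smooth of dimension $1$ at $x_i$ exactly when $A$ restricts to an isomorphism on $V_{x_i}$, i.e.\ when $\Ker A\cap V_{x_i}=0$; this is the complement of a codimension-one determinantal condition on $A$, hence holds for generic $A$. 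Intersecting these finitely many open conditions over $i=1,\dots,r$ with the open condition of the previous paragraph, a generic $A$ makes $Z_0$ smooth of dimension one at a general point $x_i$ of every branch. Smoothness at $x_i$ forces $Z_0$ to coincide near $x_i$ with the irreducible curve $X_0^{(i)}$ it contains, so $Z_0$ is reduced at the generic point of each branch — which is the assertion that $(Z_0,0)$ is reduced along $(X_0,0)$.

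The main obstacle is the dimension count in the complete intersection step: one must guarantee that the $n-1$ generic combinations do not produce a component of $Z_0$ of dimension $\geq 2$, and this is exactly the content of the equality $\dim\tilde Z'=\dim M+1$ together with the irreducibility of $\tilde Z'$. The reducedness step is, by contrast, pointwise linear algebra once one restricts to the finitely many branches; the only care needed there is the passage from smoothness at the chosen point $x_i$ to reducedness at the generic point of that branch.
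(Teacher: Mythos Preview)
Your argument is correct. For the reducedness along $(X_0,0)$ you do exactly what the paper does: pick a smooth point $x_i$ on each branch, use the relation $\mathrm{Jac}(g)=A\,\mathrm{Jac}(\tilde f)$, and observe that the rank-$(n-1)$ condition at $x_i$ is a Zariski-open condition on $A$ (the paper phrases this via Cauchy--Binet, you via $\Ker A\cap V_{x_i}=0$; these are the same condition). The paper then finishes by remarking that a complete intersection has no embedded components, while you argue directly that smoothness at $x_i$ forces $(Z_0,x_i)=(X_0^{(i)},x_i)$; either way one obtains reducedness at the generic point of every branch of $X_0$.

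Where you genuinely diverge is in the complete-intersection step. The paper dispatches this in one line by successive prime avoidance in $\mathcal{O}_{\mathbb{C}^n,0}$: since $(\tilde f_1,\dots,\tilde f_p)$ has height $n-1$, at each stage a generic $\mathbb{C}$-linear combination of the $\tilde f_j$ lies outside the finitely many minimal primes over the partial sequence, so after $n-1$ steps one has a regular sequence. Your incidence-variety argument over $(U\setminus X_0)\times M$ reaches the same conclusion by a global fiber-dimension count. The prime-avoidance route is shorter and stays inside the local ring; your route is more geometric and makes the dependence on $A$ transparent, at the mild cost of having to appeal to upper semicontinuity of fiber dimension in the analytic category to extract a genuinely open condition on $A$ rather than merely a dense one.
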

\begin{proof}
Successive applications of prime avoidance imply that $(Z_0,0)$ is a complete intersection for generic $A$. Next, observe that the following relation holds between the Jacobian matrices of $(X_0,0)$ and $(Z_0,0)$: $J(Z_0)_{|X_0}=AJ(X_0)$. Let $(X,0)=\bigcup_{i=1}^s X_i$ be the decomposition of $(X,0)$ into irreducible
components. For each $i=1,\ldots,s$ pick a smooth point $x_i\in X_i$. By the Cauchy--Binet formula for generic $A$ we have that $J(Z_0)$ evaluated at $x_i$ is of maximal rank $n-1$. But $(Z_0,0)$ is a complete intersection, so it does not have embedded components. Thus $(Z_0,0)$ is reduced along $(X_0,0)$.
\end{proof}
\textbf{Remarks.} In \cite[Sct.\ 6.3]{BR24} we show that Bertini's theorem yields a stronger result: for generic $A$ the complete intersection $Z_0$ is reduced.
In \cite[Cor.\ 5.4]{BR24} we prove that $y \mapsto I(X_y,W_y)$ is constant for projective curves using a formula for the arithmetic genus of a projective curve.


\section{Proof of Theorem \ref{main mpt}. The relative Nash blowup}\label{main proof}

\subsection{Proof of Theorem \ref{main mpt}}\label{proof mpt} Let $h \colon (X,0) \rightarrow (Y,0)$ be an embedded flat deformation of a reduced curve $(X_0,0) \subset (\mathbb{C}^n,0)$ with $Y$ smooth of dimension $k$. Suppose $(X,0)$ is cut out from $(\mathbb{C}^n \times Y,0)$ by the vanishing of the analytic functions $f_1, \ldots, f_p$. Let $Z$ be the complete intersection curve in $(\mathbb{C}^n \times Y,0)$ with equations given by 
$A(f_1, \ldots, f_p)^{T}$ for some general $A \in \mathrm{Mat}((n-1)\times p,\mathbb{C})$.  Set $W:=\overline{Z\setminus X}$ and set $S:=X \times_{Z} W$. By Prop.\ \ref{reducedness}, $Z_0$ is reduced along $X_0$, and so is $Z_y$  along $X_y$ for each $y \in Y$ after $Y$ is replaced by a smaller neighborhood of $0$.

Denote by $J_{h}(Z)$ the $\mathcal{O}_X$-module generated by the columns of the relative Jacobian matrix of $Z$ restricted to $X$. By Prop.\ \ref{conormal} $C_{h}(X)=\mathrm{Projan}(\mathcal{R}(J_{h}(Z))$ where $\mathcal{R}(J_{h}(Z))$ is the Rees algebra of $J_{h}(Z)$. 

In terms of presentation matrices, we have $[J_{h}(Z)]= A[J_{h}(X)]$.
For each $y$ denote by $J(Z_y)$ the image of $J_{h}(Z)$ in $\mathcal{O}_{X_y}^{n-1}$. It is the $\mathcal{O}_{X_y}$-module generated by the columns of the Jacobian matrix of $Z_y$ restricted to $X_y$. Denote by $e(J(Z_y),\mathcal{O}_{X_y}^{n-1})$ the Buchsbaum--Rim multiplicity of the pair $J(Z_y) \subset \mathcal{O}_{X_y}^{n-1}$. By Thm.\ \ref{MPT}

\begin{equation}\label{MPT-CID}
e(J(Z_0),\mathcal{O}_{X_0}^{n-1})-\sum_{x \in (Z_y)_{\mathrm{sing}} \cap X_y}e(J(Z_y,x),\mathcal{O}_{X_y,x}^{n-1})= \mathrm{deg}_{Y}  \Gamma_{h}^{k}(X)
\end{equation}
where $J(Z_y,x)$ is the stalk of $J(Z_y)$ at $x$.
By Prop.\ \ref{int.clos.} for general $A$ we have $e(J(Z_y,x))=e(\mathrm{Jac}(X_y,x))$ for each $y$ close enough to $0$ and $x \in (X_{y})_{\mathrm{sing}}$ where $e(\mathrm{Jac}(X_y,x))$ is the Hilbert--Samuel multiplicity of the Jacobian ideal of $(X_y,x)$. By \cite[Cor.\ 3.10]{BR24} 
$$e(J(Z_y,x))=I_{x}(X_y,W_y) \ \text{for} \ x \in (X_y)_{\mathrm{sm}}.$$ Therefore, we can rewrite (\ref{MPT-CID}) as

\begin{equation}\label{MPT2}
e(\mathrm{Jac}(X_0,0))-\sum_{x \in (X_y)_{\mathrm{sing}}}e(\mathrm{Jac}(X_y,x))-\sum_{x \in (X_{y})_{\mathrm{sm}}\cap W_y}I_x(X_y,W_y)=\mathrm{deg}_{Y}  \Gamma_{h}^{k}(X).
\end{equation}
By Thm.\ \ref{constancy} we have

\begin{equation}\label{flatness}
I_{0}(X_0,W_0)=\sum_{x \in X_y \cap W_y}I(X_y,W_y)
=\sum_{x \in (X_{y})_{\mathrm{sm}}\cap W_y}I_x(X_y,W_y)+\sum_{x \in (X_{y})_{\mathrm{sing}}\cap W_y}I_x(X_y,W_y).
\end{equation}
Because $e(J(Z_y,x))=e(\mathrm{Jac}(X_y,x))$ for each $y$ close enough to $0$ and $x \in (X_{y})_{\mathrm{sing}}$, by 
\cite[Thm.\ 3.9]{BR24} $I_0(X_0,W_0)=\mathrm{cid}(X_0)$ and $\sum_{x \in (X_{y})_{\mathrm{sing}}\cap W_y}I_x(X_y,W_y)=\mathrm{cid}(X_y)$. Thus we can rewrite (\ref{flatness}) as
\begin{equation}\label{flatness-cid}
\mathrm{cid}(X_0)=\sum_{x \in (X_{y})_{\mathrm{sm}}\cap W_y}I_x(X_y,W_y)+\mathrm{cid}(X_y).
\end{equation}
Set $e(\mathrm{Jac}(X_y)):=\sum_{x \in (X_y)_{\mathrm{sing}}}e(\mathrm{Jac}(X_y,x))$. Combining (\ref{MPT2}) and (\ref{flatness-cid}), we obtain
$$
e(\mathrm{Jac}(X_0)) - \mathrm{cid}(X_0)-\bigl( e(\mathrm{Jac}(X_y))-\mathrm{cid}(X_y)\bigl)=\mathrm{deg}_{Y}  \Gamma_{h}^{k}(X)$$
which proves (\ref{mpt-cid}).
\begin{corollary}\label{main} Suppose $(X_0,0) \subset (\mathbb{C}^n,0)$
is a reduced smoothable curve. Then 
\begin{equation}
e(\mathrm{Jac}(X_0)) - \mathrm{cid}(X_0)= \mu_0+ m_0 -1.
\end{equation}
\end{corollary}
\begin{proof} Using Kleiman's transversality theorem as in \cite[Prop.\ 6.4]{BR24} we can select $Z$ such that the points $X_y \cap  W_y$ are ordinary double points in $Z_y$ with $X_y$ smooth and $y$ in a Zariski open subset of $(Y,0)$. Consider (\ref{MPT-CID}). For general $Z$ we have $e(J(Z_0,0))=e(\mathrm{Jac}(X_0))$. Because $X_y$ is smooth we have $(Z_y)_{\mathrm{sing}} \cap X_y=X_y \cap W_y$. Because $X_y \cap  W_y$ are ordinary double points in $Z_y$ we have $e(J(Z_y,x),\mathcal{O}_{X_y,x}^{n-1})=1$ for each $x \in X_y \cap W_y$ (see the last paragraph in the proof of \cite[Prop.\ 6.4]{BR24}. But $I_{x}(X_y,W_y)=1$. So $\sum_{x \in (Z_y)_{\mathrm{sing}} \cap X_y}e(J(Z_y,x),\mathcal{O}_{X_y,x}^{n-1})=I(X_y,W_y)$. Combining Thm.\ \ref{constancy} and (\ref{MPT-CID}), we obtain
$e(\mathrm{Jac}(X_0))-\mathrm{cid}(X_0)= \mathrm{deg}_{Y}  \Gamma_{h}^{k}(X)$. The proof now follows from Prop.\ \ref{Euler}.
\end{proof}


\begin{example}\label{3,4,5}\rm{Consider the space curve $X_0$ given by the following equations $f_1=x^2y-z^2,f_2=x^3-yz,f_3=y^2-xz$, which come from the $2\times2$ minors of the matrix
$\begin{pmatrix} x & y & z \\ y & z & x^2 \end{pmatrix}.$ 
The parametrization for $X_0$ is given by $\{(t^3,t^4,t^5)|t\in \mathbb{C}\}\subset \mathbb{C}^3$. It is a well-known
example of a non-planar curve, which is Cohen--Macaulay of codimension $2$, but not Gorenstein. Its Jacobian matrix is 
\[\begin{pmatrix}
    2xy & x^2 & -2z \\ 3x^2 & -z & -y \\ -z & 2y & -x
\end{pmatrix}.\]
Let $f_0=2y^2+xz+3yz+z^2-4x^3-7x^2y-2xy^2-7x^2z-2xyz-3x^4$. It is a generic combination of the $2\times2$ minors of the Jacobian matrix. Then (via a \textsc{Singular} computation \cite{DGPS}) $e(\mathrm{Jac}(X_0))=\dim_{\mathbb{C}} \mathcal{O}_{X_0,0}/(f_0) = 8.$

To compute $\mathrm{cid}(X_0)$ we use the complete intersection
$Z_0=\mathbb{V}(f_1+f_2,f_1+f_3)$, which gives
a generic complete intersection, such that $e(J(Z_0),\mathcal{O}_{X_0}^2)=e(\mathrm{Jac}(X_0))$, where $\mathcal{O}_{X_0}^2/J(Z_0)$ has presentation matrix $[J(Z_0)]:=\begin{pmatrix} 1 & 1 & 0 \\ 1 & 0 & 1 \end{pmatrix}[J(X_0)]$. By a quotient ideal computation we find $$W_0=\mathbb{V}(x+y+z,y+z+x^2)
\ \text{and} \ 
I_{0}(X_0, W_0)=\dim_{\mathbb{C}}\mathcal{O}_{\mathbb{C}^3,0}/I(X_0)+I(W_0)=2.$$
Thus 
$$\mu (X_0) = e(\mathrm{Jac}(X_0)) - I_{0}(X_0,W_0) - m +1=8-2-3+1 = 4.$$
Now choose $Z'_0=\mathbb{V}(f_1,f_3)$. Denote by $J(Z_0)'$ the submodule in $\mathcal{O}_{X_0}^2$ generated by the columns of the matrix obtained from the Jacobian matrix of $X_0$ by erasing the second row.
Then $$e(J(Z_0)',\mathcal{O}_{X_0}^2)=9>e(J(Z_0),\mathcal{O}_{X_0}^2).$$ But $W_0'=(y,z)$. So $I_{0}(X_0, W_0')=3$. 
A smoothing of $X_0$ is given by the $2 \times 2$ minors of
$\begin{pmatrix} x & y & z \\ y & z & x^2-t \end{pmatrix}.$ Then $$X \times _{\mathbb{C}^{4}} W' = \mathbb{V}(x,y,z) \cup \mathbb{V}(y,z,x^2-t)$$ and $X_t$ and $W'_t$ meet locally transversally at $3$ points
for $t \neq 0$. As predicted by \cite[Thm.\ 3.9]{BR24} we have  $$e(J(Z_0),\mathcal{O}_{X_0}^2) - I_{0}(X_0,W_0)=e(J(Z_0)',\mathcal{O}_{X_0}^2)-I_{0}(X_0, W_0')=6.$$

Finally, observe that for the special choice $Z''_0 = \mathbb{V}(f_1,f_2)$, in the induced deformation of $Z''$ by the smoothing of $X_0$ above, we have $W''=\mathbb{V}(z,x^2-t)$ and $X \times _{\mathbb{C}^{4}} W'' = \mathbb{V}(z,y^2,x^2-t)$. For $t\neq 0$, one finds that $W_t''$ consists of two lines each tangent to $X_t$, i.e. $X_t$ and $W''_t$ do not intersect locally transversally. As $t \rightarrow 0$, these two lines degenerate to the double line $W''_{0}=\mathbb{V}(z,x^2)$.}
\end{example}

\subsection{The relative Nash blowup}
Preserve the setup from the beginning of Sct.\ \ref{proof mpt}. The relative Nash blowup $N_h(X)$ is defined as the closure in $X \times \mathrm{Gr}(1,n)$ of the pairs $(x,T_xX_{h(h)})$ where $x$ is a smooth point in $X_{h(x)}$ and $T_{x}X_{h(x)}$ is the tangent space to $X_{h(x)}$ at $x$. Denote by $\mathrm{Jac}_h(Z)$ the relative Jacobian ideal of $Z$ in $\mathcal{O}_{X,0}$, i.e.\ the ideal of $(n-1) \times (n-1)$ minors of $J_h(Z)$. In \cite[Prop.\ 1.2.1 c)]{Teissier} it is shown that $N_h(X) \cong \mathrm{Bl}_{\mathrm{Jac}_h(Z)}(X)$.
Denote by $D$ the exceptional divisor of $B:=\mathrm{Bl}_{\mathrm{Jac}_h(Z)}(X)$ and denote by $D_{\mathrm{vert}}$ the union of the components of $D$ that surject onto $0 \in Y$. 
Denote by $B_{y}$ the fiber of $B$ over $y \in Y$. Set $B(y)=\mathrm{Bl}_{\mathrm{Jac}(Z_y)}(X_y)$. For a general $y \in Y$ we have $B(y)=N(X_y)$. Set $l:=c_{1}\mathcal{O}_{B}(1)$. Denote by $\int l^{r}[D_{\mathrm{vert}}]$ the degree of $D_{\mathrm{vert}}$.

\begin{proposition}\label{vertical} The following holds. 
\begin{enumerate}
\item[\rm{(i)}] Assume $\dim Y=1$. For $y \neq 0$ close enough to $0$ we have
$$e(\mathrm{Jac}(X_0)) - \mathrm{cid}(X_0)-\bigl( e(\mathrm{Jac}(X_y))-\mathrm{cid}(X_y)\bigl)=\int l^{r}[D_{\mathrm{vert}}].$$
\item[\rm{(ii)}] $D_{\mathrm{vert}}$ is empty if and only if we have an equality of fundamental cycles $[B_0]=[B(0)]$ if and only if $y \mapsto e(\mathrm{Jac}(X_y))-\mathrm{cid}(X_y)$ is constant for $y$ in a small enough neighborhood of $0$.
\end{enumerate}
\end{proposition}
\begin{proof} By the Excess--Degree Formula (see \cite[Thm.\ 2.1 (i)]{Rangachev}) we have
$$e(\mathrm{Jac}(Z_0))-e(\mathrm{Jac}(Z_y))=\int l^{r}[D_{\mathrm{vert}}].$$
The rest follows from $(\ref{MPT-CID})-(\ref{flatness-cid})$. Part \rm{(ii)} follows from \cite[Thm.\ 2.1 (ii)]{Rangachev}.
\end{proof}
\textbf{Gorenstein curves}. Following \cite[Prop.\ 1, p.\ 508]{Pi} we can define the relative $\omega$-jacobian ideal $J_h$ as
$$J:=\mathrm{Ann}(\mathrm{Coker}(\Omega^1_{X/Y} \xrightarrow{} \omega_{X/Y}))$$
where $\Omega^1_{X/Y}$ denotes the sheaf of relative holomorphic $1$-forms of $X$ over $Y$ and $\omega_{X/Y}$ is the relative dualizing sheaf associated with the family $X \rightarrow Y$. 
As usual, set $W:=\overline{Z\setminus X}$. Denote by $I_W$ the ideal of $W$ in $\mathcal{O}_{X,0}$. 
By \cite[Prop.\ 4.2]{BR24} one sees that 
$$J_h=(\Jac_h(Z):_{\mathcal{O}_{X,0}} I_W).$$
Clearly, $J_h$ is supported over the relative singular locus of $h: X \rightarrow Y$ defined by the ideal $\mathrm{Jac}_h(X)$. Suppose $(X,0) \rightarrow (Y,0)$ is a one-parameter family of Gorenstein curves. By \cite[Thm.\ 2, p. 516]{Pi} we have $N_h(X) \cong \mathrm{Bl}_{J_h}(X)$. Denote the image of $J_h$ in $\mathcal{O}_{X_y}$ by $J_{h}(y)$ and denote by $D(y)$ the exceptional divisor of the blowup of $X_y$ with center the ideal $J_{h}(y)$. Each $D(y)$ is a zero-dimensional cycle $D(y)=\sum_{}m_{p_y}[p_y]$ in  $A_0(\mathrm{Bl}_{J_{h}(y)}(X_y))$ (the group of zero cycles modulo rational equivalence). Its degree is defined as $\mathrm{deg}(D(y)):=\sum_p m_{p_y}[k(p_y):\mathbb{C}]=\sum_{p_y} m_{p_y}$(see \cite[Dfn.\ 1.4]{Ful}).
Denote by $D$ the exceptional divisor of $\mathrm{Bl}_{J_h}(X)$ and set $\mathrm{deg}(D):=\int l^{r}[D_{\mathrm{vert}}]$ where $l:=c_{1}\mathcal{O}_{\mathrm{Bl}_{J_h}(X)}(1)$ and $D_{\mathrm{vert}}$ is the union of irreducible components of $D$ that surject onto $0 \in Y$.  By \cite[Cor.\ 4.3]{BR24} we have $$\mathrm{deg}(D(y))= e(\mathrm{Jac}(X_y))-\mathrm{cid}(X_y).$$
Thus, for families of Gorenstein curves, the identity in Prop.\ \ref{vertical} \rm{(i)} reads as 
$$\boxed{\mathrm{deg}(D(0))-\mathrm{deg}(D(y))=\mathrm{deg}(D_{\mathrm{vert}})}$$
with $y \neq 0$.

\section{A Riemann--Hurwitz formula for singular curves}

In this section we prove a Riemann--Hurwitz formula for a singular curve
projecting to a disc. A Riemann--Hurwitz formula for finite
separable morphisms $f: C\to D$, where $C$ and $D$ are complete irreducible curves and $D$ is nonsingular was proved in \cite[Thm.\ 2]{Ch84}. Chiarli relates the arithmetic genera of $C$ and $D$ with the degree of $f$ and the valuations of the ramification divisor of $f$ in $D$. Another Riemann--Hurwitz-type formula, stated in terms of ramification modules of differential forms, was obtained in \cite{MVS}.

Let $\mathcal{X}$ be a curve defined by the vanishing of some complex analytic functions in a Euclidean neighborhood $\mathcal{V}$ of $0$ in $\mathbb{C}^n$. Consider an open ball  $\mathring{\mathbb{B}}=\mathring{\mathbb{B}}(0,\epsilon)\subset(\C^n,0)$ contained in $\mathcal{V}$. Denote by $\mathbb{B}$ the closure of $\mathring{\mathbb{B}}$ in the Euclidean topology. Set $\partial\mathcal{X}=\mathcal{X} \cap \partial\mathbb{B}$. Assume $\epsilon$ is chosen so that $\mathcal{X}_{\mathrm{sing}} \cap \partial \mathcal{X} = \emptyset$ and $\partial\mathbb{B}$ intersects $\mathcal{X}$ transversally. By a result of \cite{Loj}, $\mathcal{X}$, viewed as a real--analytic set, is triangulable. Thus, $\mathcal{X} \cap \mathbb{B}$ has a well-defined Euler characteristic, which we denote by $\chi(\mathcal{X})$. Identify $\mathcal{X}$ with $\mathcal{X} \cap \mathring{\mathbb{B}}$. 

Let $\mathbb{D}=\mathbb{D}(0,\delta)\subset(\C,0)$ be a closed disk.
Fix local coordinates $x_1,\ldots,x_n$ of $\mathbb{B}$ and $t$ of $\mathbb{D}$. Let $F:\mathbb{B}\to
\mathbb{D}$ with $F(x_1, \ldots, x_n)=\sum_{i=1}^n\alpha_ix_i$ be a linear functional with  $\alpha_i$ 
general, so that $F$ is surjective,  $F|_{\mathcal{X}_{\mathrm{sm}}}$ has only non-degenerate (quadratic) singularities, and $F|\mathcal{X}$ has no critical points on $\partial X$. Denote the number of critical points of $F|_{\mathcal{X}_{\mathrm{sm}}}$ by $\#\mathrm{Crit}(F|_{\mathcal{X}_{\mathrm{sm}}})$. Shrink $\epsilon$  and $\delta$ if necessary so that $F|_{\mathcal{X}}: \mathcal{X} \rightarrow \mathbb{D}$ is finite. Set $d:=\mathrm{deg}(F|\mathcal{X})=\#(F^{-1}(t) \cap \mathcal{X})$ for $t$ generic. Denote the multiplicity of $\mathcal{X}$ at $x$ by $m_x$.


\begin{proposition}\label{prop:Euler_char}
Suppose $\mathcal{X}$ is a reduced curve. Then for a general linear functional $F:\mathbb{B}\to\mathbb{D}$, we have
\[\chi(\mathcal{X})=d-\Big(\#\mathrm{Crit}(F|_{\mathcal{X}_{\mathrm{sm}}})
+\sum_{x\in\mathcal{X}_{\mathrm{sing}}}(m_x-1)\Big).\]
\end{proposition}

\begin{proof}
Denote by $\nu:\overline{\mathcal{X}}\to\mathcal{X}$  the normalization morphism. Any triangulation of the Euclidean closure of $\mathcal{X}$ can be refined to a triangulation such that each singular point and each critical point of $F|\mathcal{X}$ is a vertex of a triangle, and such that the resulting triangulation gives a triangulation of $\mathcal{X}$ and its boundary. Such a triangulation can be lifted to a triangulation of the Euclidean closure of $\overline{\mathcal{X}}$ so that the number of triangles and edges is preserved but each singular point $x$ is replaced by $r_x:=|\nu^{-1}(x)|$ vertices (this procedure separates some of the triangles with a common vertex at a singular point). The result is the addition in the triangulation of $\overline{\mathcal{X}}$ of $r_x-1$ new vertices for each $x \in \mathcal{X}_{\mathrm{sing}}$. Therefore,
\begin{equation}\label{eq:RH_norm}
\chi(\overline{\mathcal{X}})=\chi(\mathcal{X})+\sum_{x\in\mathcal{X}_{\mathrm{sing}}}(r_x-1).
\end{equation}
Consider the decomposition of
$\mathcal{X}=\bigcup_{j=1}^{R}\mathcal{X}_j$ into irreducible components. The normalization
is given by $\overline{\mathcal{X}}=\bigsqcup_{j=1}^{R}\overline{\mathcal{X}_j}$, where
$\overline{\mathcal{X}_j}$ is the normalization of $\mathcal{X}_j$. Each
$\overline{\mathcal{X}_j}$ is connected and nonsingular and its closure in the Euclidean topology is compact. Set $f:=F|_{\mathcal{X}}$ and
$\nu_j:=\nu|_{\overline{\mathcal{X}_j}}$ for the restrictions, and set $G:=f\circ\nu,\:
G_j:=f\circ\nu_j$. These maps fit into the following diagram:
\[\begin{tikzcd}
\overline{\mathcal{X}}\ar[r,"\nu"] \ar[rd,"G"'] &
    \mathcal{X} \ar[r,"\iota",hook] \ar[d,"f"] & \mathbb{B}\ar[ld,"F"]\\
& \mathbb{D}.
\end{tikzcd}\]
For any $p\in\overline{\mathcal{X}_j}$,
let $q=G_j(p)$, and let $s_p,t_q$ be local uniformizing parameters at $p$ and $q$.
Then $G_j^*(t_q)=us_p^{e_p}$ for some unit $u$ in $\Or_{\overline{\mathcal{X}_j},p}$.
The natural number $e_p$ is called the {\it ramification index} of $G_j$ at $p$. Set $d_j:=\#(\mathcal{X}_j \cap f^{-1}(t))$ for generic $t$. Then $d:=\#(\mathcal{X} \cap f^{-1}(t))=\sum_{j=1}^rd_j$.
Furthermore, because $\nu$ is an isomorphism away from finitely many points we have $d_j=\#(\overline{\mathcal{X}_j} \cap f^{-1}(t))$ and thus $d=\#(\overline{\mathcal{X}} \cap f^{-1}(t))$ for generic $t$. Denote by $\chi(\overline{\mathcal{X}_j})$ the Euler characteristic of the Euclidean closure of $\overline{\mathcal{X}_j}$. Since $\mathbb{D}$
is contractible, $\chi(\mathbb{D})=1$ and the Riemann--Hurwitz formula ($G_j$ is simplicial by construction) gives
\begin{equation}\label{eq:RH_partial}
\chi(\overline{\mathcal{X}_j})=d_j-\sum_{p\in\overline{\mathcal X_j}}(e_p-1) \ \ \text{for} \ \ j=1,\ldots, R.
\end{equation}


Combining \eqref{eq:RH_norm} and \eqref{eq:RH_partial}, we obtain
\begin{align}\label{eq:sums}
\chi(\mathcal{X}) &= \chi(\overline{\mathcal{X}})-
    \sum_{x\in\mathcal{X}_{\mathrm{sing}}}(r_x-1) =
    \sum_{j=1}^R\chi(\overline{\mathcal{X}_j})-
    \sum_{x\in\mathcal{X}_{\mathrm{sing}}}(r_x-1) \nonumber\\
&= \sum_{j=1}^R\Big(d_j-\sum_{p\in\overline{\mathcal X_j}}(e_p-1)\Big)-
    \sum_{x\in\mathcal{X}_{\mathrm{sing}}}(r_x-1) \\
&= d-\sum_{p\in\overline{\mathcal{X}}}(e_p-1)-
    \sum_{x\in\mathcal{X}_{\mathrm{sing}}}(r_x-1).\nonumber
\end{align}
Rewrite $\sum_{p\in\overline{\mathcal{X}}}(e_p-1)$ as
$\sum_{x\in\mathcal{X}}\sum_{p\in\nu^{-1}(x)}(e_p-1)$. We separate this double sum according to
whether $x$ is smooth or singular.

Suppose $x\in\mathcal{X}_{\mathrm{sm}}$. There exists an open neighborhood
$U\subset\mathcal{X}$ of $x$ such that $\nu$ induces an isomorphism $\nu^{-1}(U)
\xrightarrow{\simeq}U$ and thus the ramification index
of $G_j$ at $p$ is identical to the ramification index of $f$ at $x$.
By Morse theory, the restriction of a general $F$ to $\mathcal{X}_{\mathrm{sm}}$ has only
non-degenerate (or quadratic) critical points. This amounts to saying that for any critical
value $q\in\mathbb{D}$ of $F$, the hyperplane $F^{-1}(q)$ is tangent to $\mathcal{X}$ at $x$
and $\dim_\C\Or_{\mathcal{X},x}/(\mathcal{I}_x)=2$, where $\mathcal{I}_x$ is the image of
the ideal of the hyperplane $F^{-1}(q)$ inside $\Or_{\mathcal{X},x}$. Since $F(x)=q$,
the ideal $\mathcal{I}_x$ is generated by $f^*(t_q)$ and thus $e_x=2$. We conclude that
\begin{equation}\label{eq:sum_ep_sm}
\sum_{x\in\mathcal{X}_{\mathrm{sm}}}\sum_{p\in\nu^{-1}(x)}(e_p-1)=
\#\mathrm{Crit}(F|_{\mathcal{X}_{\mathrm{sm}}}).
\end{equation}

Suppose $x\in \mathcal{X}_{\mathrm{sing}}$. We claim that for general $F$ we have 
\begin{equation}\label{ram-mult}
\sum_{p\in\nu^{-1}(x)}e_p=m_x.
\end{equation}

Fix $p\in\nu^{-1}(x)$ and write $q=F(x)=G(p)$.
We abbreviate the local maps between stalks to $F^*,G^*,\nu^*$ and $\iota^*$. Let $s_p$
and $t_q$ be respective local uniformizers for the DVRs $\Or_{\overline{\mathcal{X}},p}$
and $\Or_{\mathbb{D},q}$. Set the coordinates $x=(z_1,\ldots,z_n)$, so that
$x_1-z_1,\ldots,x_n-z_n$ are local regular parameters of $\mathbb{B}$ at $x$. Thus
$F^*(t_q)=\sum_{i=1}^na_i({x}_i-z_i),\:a_i\in\C$.
Define $m_p$ as the multiplicity of the ideal $\nu^*(\m_{\mathcal{X},x})$,
i.e.\ $m_p=\ord_{s_p}\nu^*(\m_{\mathcal{X},x})$.

Observe that $\m_{\mathcal{X},x}$ is generated by $\iota^*(x_1-z_1),\ldots,\iota^*(x_n-z_n)$.
Thus
\begin{equation}\label{eq:local_mult}
m_p=\ord_{s_p}\nu^*(\m_{\mathcal{X},x})=
\min_{1\leqslant i\leqslant n}\ord_{s_p}\nu^*\iota^*(x_i-z_i).
\end{equation}
By definition we have $G^*(t_q)=\nu^*f^*(t_q)=\nu^*\Big(\sum_{i=1}^na_i\iota^*(x_i-z_i)\Big)=
\sum_{i=1}^na_i\nu^*\iota^*(x_i-z_i)$. Hence, for general $a_i$ we have
\begin{equation}\label{eq:minimum}
e_p=\ord_{s_p}G^*(t_q)=\min_{1\leqslant i\leqslant n}\ord_{s_p}\nu^*\iota^*(x_i-z_i).
\end{equation}
Note that algebraically the genericity condition on the $\alpha_i$ is equivalent to asking that the ideal $(\sum_{i=1}^n\alpha_i(x_i-z_i))$ is a reduction of $\mathfrak{m}_{\mathcal{X},x}$ (cf.\ \cite[A13, pg.\ 122]{Mumford}).
Combining \eqref{eq:local_mult} with \eqref{eq:minimum}, we obtain that for general
$F$, for every $x\in\mathcal{X}_{\mathrm{sing}}$ and every $p\in\nu^{-1}(x),\,e_p=m_p$.

We conclude by applying the projection formula for multiplicities \cite[Thm.\ 5]{Kl17}
to $\nu$. The relative degree of $\nu$ is $1$ (since $\nu$ is birational) and the residue
field extensions are trivial since the base field is algebraically closed.
Thus $m_x=\sum_{p\in\nu^{-1}(x)}m_p$.

We pursue the computation in \eqref{eq:sums}
\begin{align*}\chi(\mathcal{X}) &=
d-\sum_{x\in\mathcal{X}}\sum_{p\in\nu^{-1}(x)}(e_p-1)-
    \sum_{x\in\mathcal{X}_\mathrm{sing}}(r_x-1) \\
&= d-\sum_{x\in\mathcal{X}_\mathrm{sm}}\sum_{p\in\nu^{-1}(x)}(e_p-1)-
    \sum_{x\in\mathcal{X}_\mathrm{sing}}\Big((r_x-1)+\sum_{p\in\nu^{-1}(x)}(e_p-1)\Big).
\end{align*}
The middle term is equal to $\#\mathrm{Crit}(F|_{\mathcal{X}_{\mathrm{sm}}})$ by
\eqref{eq:sum_ep_sm}. For the last term (\ref{ram-mult}) gives
\[(r_x-1)+\sum_{p\in\nu^{-1}(x)}(e_p-1)=(r_x-1)+(m_x-r_x)=m_x-1,\]
which finishes the proof.
\end{proof}
We derive an inequality between the number of local and global irreducible components of $\mathcal{X}$. As an application we characterize the connected curves $\mathcal{X}$ with a vanishing first Betti number. Also, we derive an inequality between the critical points of a linear function and the number of its zeroes, which in the case when $\mathcal{X}$ is smooth and contractible resembles Rolle's theorem from real analysis. 

Denote by $R$ the number of irreducible components of $\mathcal{X}$. When $R \geq 2$ denote by $\{p_1, \ldots, p_k\}$ the points of intersection of the irreducible components of $\mathcal{X}$. Define the {\it frame} $(\mathcal{X}^{\mathrm{fra}},\{p_1, \ldots, p_k\})$ of $\mathcal{X}$ to be the $k$-pointed curve formed by removing those irreducible components of $\mathcal{X}$ that pass through exactly one point of intersection. To $\mathcal{X}^{\mathrm{fra}}$ one can naturally associate a hypergraph $G(\mathcal{X}^{\mathrm{fra}})$ whose vertices are $\{p_1, \ldots, p_k\}$ and whose edges represent irreducible components of $\mathcal{X}^{\mathrm{fra}}$. Note that $G(\mathcal{X}^{\mathrm{fra}})$ is empty if  $R=1$.


\begin{proposition}\label{nonneg}
Suppose $\mathcal{X} \subset \mathring{\mathbb{B}}$ is a reduced and connected curve. 
\begin{enumerate}
    \item[\rm{(i)}] We have 
    \begin{equation}\label{inq-comps}
    1+\sum_{x \in \mathcal{X}_{\mathrm{sing}}}(r_x-1)-R\geq 0
    \end{equation}
    with an equality if and only if $\nu_{j}:\overline{\mathcal{X}_j}\rightarrow \mathcal{X}_j$ is a homeomorphism ($\mathcal{X}_j$ is analytically irreducible) for $j=1, \ldots, R$ and either $G(\mathcal{X}^{\mathrm{fra}})$ is empty, or it is a tree. In particular, $1-\chi(\mathcal{X}) \geq  0$ with an equality if and only if $\mathcal{X}$ viewed as a CW-complex is contractible.
    \item[\rm{(ii)}] If $\mathcal{X}$ passes through 
$0 \in \mathbb{C}^n$, then for a general $F:\mathbb{B}\to\mathbb{D}$, we have
$$\#\mathrm{Crit}(F|_{\mathcal{X}_{\mathrm{sm}}})+\sum_{x \in \mathcal{X}, x \neq 0} (m_x-1) \geq \#(F^{-1}(0)\cap \mathcal{X} \setminus \{0\}).$$
\end{enumerate}
\end{proposition}
\begin{proof} Consider $\rm{(i)}$. Denote by $R_{p_i}$ for $i=1, \ldots, k$ the number of irreducible components of $\mathcal{X}$ passing through $p_i$. Trivially, $r_{p_{i}} \geq R_{p_i}$ with an equality if and only if each irreducible component of $\mathcal{X}$ passing through $p_i$ is analytically irreducible. Therefore,
$$1+\sum_{x \in \mathcal{X}_{\mathrm{sing}}}(r_x-1) \geq 1+\sum_{i=1}^{k} (r_{p_i}-1) \geq 1+\sum_{i=1}^k (R_{p_i}-1) \geq R$$
where the last equality follows trivially by induction on $R$ noting that we can always remove an irreducible component of $\mathcal{X}$ so that the resulting curve is connected. 

Assume we have equalities in the above sequence of inequalities.  Then $r_x-1=0$ for each $x \in \mathcal{X}_{\mathrm{sing}}$ and $x \not \in \{p_1, \ldots, p_k\}$ and $r_{p_i}=R_{p_i}$ for each $i=1, \ldots, k$. Thus each $\mathcal{X}_j$ is analytically irreducible and so $\nu_j: \overline{\mathcal{X}_j} \rightarrow \mathcal{X}_j$ is a homeomorphism. Note that removing an irreducible component $\mathcal{X}_j$ that passes through only one intersection point $p_i$ decreases the total number of irreducible components and $r_{p_i}$ by $1$. Thus 
\begin{equation}\label{fra.eq.}
1+\sum_{x \in \mathcal{X}_{\mathrm{sing}}^{\mathrm{fra}}}(r_{\mathcal{X}^{\mathrm{fra}},x}-1)
=R_{\mathcal{X}^{\mathrm{fra}}}
\end{equation}
where $r_{\mathcal{X}^{\mathrm{fra}},x}$ is the number of irreducible components of $\mathcal{X}^{\mathrm{fra}}$ passing through $x$ and $R_{\mathcal{X}^{\mathrm{fra}}}$ is the number of irreducible components of $\mathcal{X}^{\mathrm{fra}}$. If $R=1$, then clearly $G(\mathcal{X}^{\mathrm{fra}})$ is empty. Assume that $R \geq 2$ and that there exists a cycle in $G(\mathcal{X}^{\mathrm{fra}})$. Denote  by $\mathcal{C}$ the curve in $\mathcal{X}^{\mathrm{fra}}$ corresponding to it. Denote by $R_{\mathcal{C}}$ the number of irreducible components of $\mathcal{C}$ and denote by $r_{\mathcal{C},p_i}$ the number of irreducible components of $\mathcal{C}$ passing through $p_i$. Clearly $r_{\mathcal{C},p_i}=2$ or $r_{\mathcal{C},p_i}=1$ in case $p_i$ lies on a component of $\mathcal{C}$ joining $p_jp_l$ with $i$ from $j$ and $l$. Thus
\begin{equation}\label{cycle id}
1+\sum_{p_i \in \mathcal{C}}(r_{\mathcal{C},p_i}-1) =1+R_\mathcal{C}.
\end{equation}
Let $\mathcal{X}^{\mathrm{fra}}_1,\ldots, \mathcal{X}^{\mathrm{fra}}_m$ be the  connected components of $\mathcal{X}^{\mathrm{fra}}\setminus \mathcal{C}$. Because of connectivity, $\mathcal{C}$ and $\mathcal{X}^{\mathrm{fra}}_u$ are connected by an irreducible components of $\mathcal{X}$ for each $u=1, \ldots, m$. Denote by $R_{\mathcal{X}^{\mathrm{fra}}_u}$ the number of irreducible components of $\mathcal{X}^{\mathrm{fra}}_u$. Fix $u$. Let $p_j \in \mathcal{C} \cap \mathcal{X}^{\mathrm{fra}}_u$. Denote by $r_{\mathcal{X}^{\mathrm{fra}}_u,p_j}$ the number of irreducible components of $\mathcal{X}^{\mathrm{fra}}_u$ passing through $p_j$. Then 

\begin{equation}\label{local ineq}
r_{\mathcal{X}^{\mathrm{fra}},p_j}-1 \geq (1+(r_{\mathcal{X}^{\mathrm{fra}}_u,p_j}-1))+(r_{\mathcal{C},p_j}-1)
\end{equation}
Applying (\ref{inq-comps}) to each connected component of $\mathcal{X}^{\mathrm{fra}}\setminus \mathcal{C}$  and summing up, and then applying (\ref{cycle id}) and (\ref{local ineq}), we obtain 
$$1+\sum_{x \in \mathcal{X}_{\mathrm{sing}}^{\mathrm{fra}}}(r_x-1)
>1 + R_{\mathcal{C}}+ \sum_{u=1}^{m} R_{\mathcal{X}^{\mathrm{fra}}_u}>R_{\mathcal{X}^{\mathrm{fra}}}$$ 
contradicting (\ref{fra.eq.}). Thus $G(\mathcal{X}^{\mathrm{fra}})$ is a tree. The opposite side of the statement follows trivially by induction, observing that if $G(\mathcal{X}^{\mathrm{fra}})$ is a tree one can always remove a vertex (intersection point) and a edge (irreducible component) so that hypergraph corresponding the resulting curve is a tree. To prove the second statement in $\rm{(i)}$, observe that (\ref{eq:RH_norm}) gives us

\begin{equation}
1-\chi(\mathcal{X})=1+\sum_{x\in\mathcal{X}_{\mathrm{sing}}}(r_x-1)-\sum_{j=1}^{R}\chi(\overline{\mathcal{X}_j}). 
\end{equation}

Denote by $b(\overline{\mathcal{X}_j})$ the number of boundary components of $\overline{\mathcal{X}_j}$ and by $g(\overline{\mathcal{X}_j})$ the genus of $\overline{\mathcal{X}_j}$. Then $\chi(\overline{\mathcal{X}_j})=2-2g(\overline{\mathcal{X}_j})-b(\overline{\mathcal{X}_j}) \leq 1$ because $g(\overline{\mathcal{X}_j}) \geq 0$ and $b(\overline{\mathcal{X}_j}) \geq 1$. Thus
$$1-\chi(\mathcal{X}) \geq 1+\sum_{x \in \mathcal{X}_{\mathrm{sing}}}(r_x-1)-R\geq 0$$
with equalities if and only if $g(\overline{\mathcal{X}_j})=0$, $b(\overline{\mathcal{X}_j})=1$ ($\overline{\mathcal{X}_j}$ is a disk),  $\nu_{j}:\overline{\mathcal{X}_j}\rightarrow \mathcal{X}_j$ is a homeomorphism, and either $R=1$ or $G(\mathcal{X}^{\mathrm{fra}})$ is a tree. Thus, $1-\chi(\mathcal{X})=0$ implies that up to a homeomorphism, $\mathcal{X}$ is a union of disks that intersect at points, forming a tree-like configuration. Contract each disk that passes through exactly one point of intersection $x_j$ to $x_j$. Because $G(\mathcal{X}^{\mathrm{fra}})$ is a tree, we can continue the process until there is one point left. Thus $\mathcal{X}$ is contractible. 

Consider $\rm{(ii)}$. By Prop.\ \ref{prop:Euler_char} and $\rm{(i)}$ we have
$$\#\mathrm{Crit}(F|_{\mathcal{X}_{\mathrm{sm}}})
+\sum_{x\in \mathcal{X}, x \neq 0}(m_x-1)-(d-m_0)=1-\chi(\mathcal{X}) \geq 0.$$
The inequality follows from the fact that $d-m_0=\#(F^{-1}(0)\cap \mathcal{X} \setminus \{0\})$ because $\mathcal{X}$ is Cohen--Macaulay. An illustration is provided in the figure below. 
\begin{figure}[ht]
\includegraphics[scale=0.12]{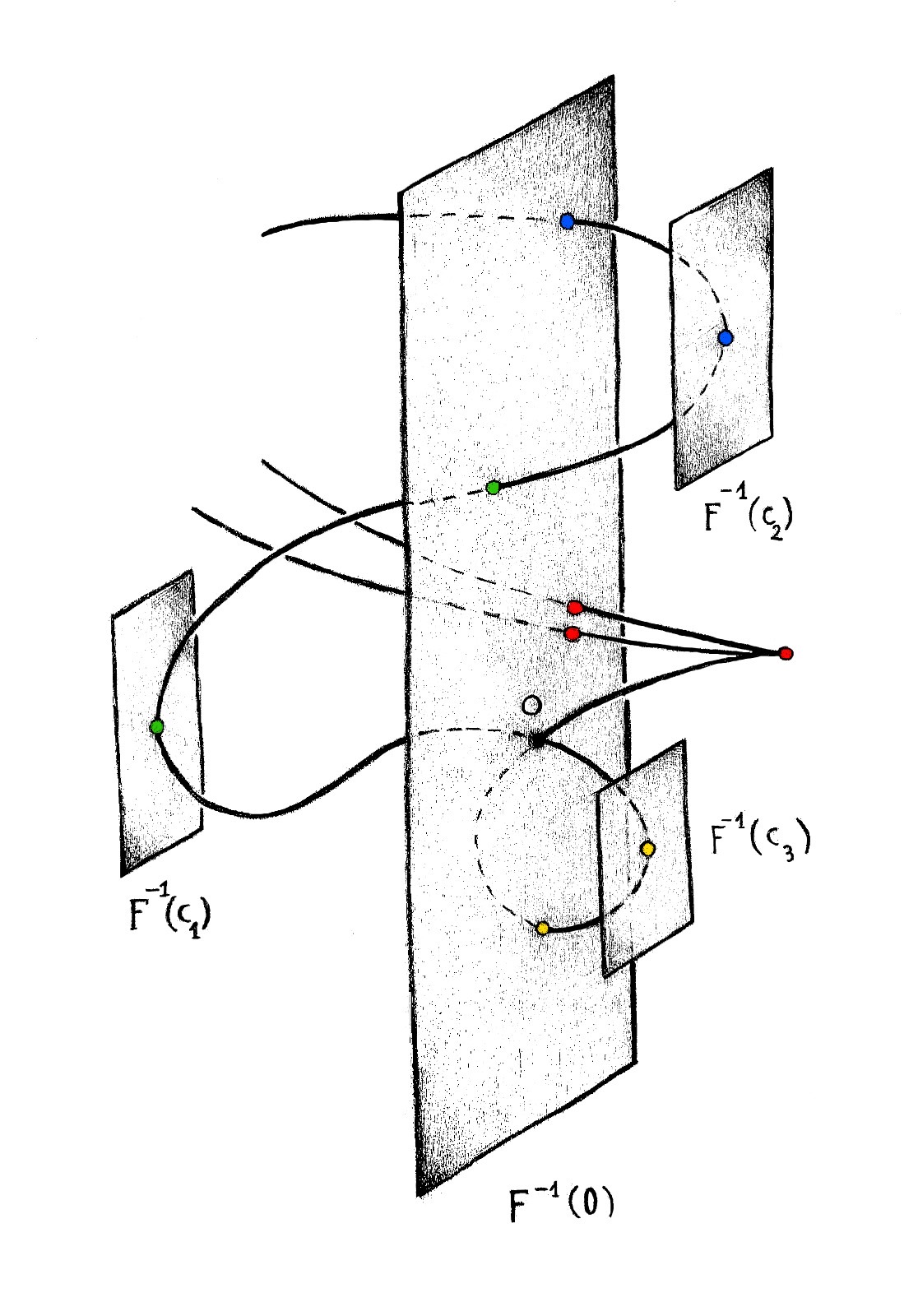}
\caption{}\label{otherfig1}
\end{figure}
\end{proof}
The implication $1=\chi(\mathcal{X}) \Rightarrow \mathcal{X}$ is contractible was already proved by Buchweitz and Greuel in \cite[Thm.\ 4.2.4]{BG80}. In their setting $\mathcal{X}$ has vanishing hypercohomology $H^1(\mathcal{X},\mathbb{C})$. They show that the equality $H^1(\mathcal{X},\mathbb{C})=H^1(\overline{\mathcal{X}},\mathbb{C})=0$ and classification of open Riemann surfaces yields that $\overline{\mathcal{X}}$ is a union of disks. Then by a Mayer-Vietoris argument they conclude that $\mathcal{X}$ up to a homeomorphism  has a tree-like configuration of disks which implies contractibility. Our contribution in Prop.\ \ref{nonneg} \rm{(i)} is that the tree-like configuration of $\mathcal{X}$ is a consequence of the equality in (\ref{inq-comps}),  which is weaker than $1=\chi(\mathcal{X})$.

\section{Zariski upper semicontinuity}
In this section we establish the Zariski upper semicontinuity of several invariants with no assumption on the dimension of the parameter space $Y$. We will say that a function $\alpha:(Y,0) \rightarrow \mathbb{Z}_{\geq 0}$ is {\it Zariski upper semicontinuous} if $\alpha(0) \geq \alpha(y)$ for $y$ in a Zariski neighborhood of $0 \in Y$ and if there exists a Zariski open subset $U$ of $Y$ such that $\alpha(y)$ is constant for $y \in U$. Throughout this section we adopt the family setup of Thm.\ \ref{main mpt}. We make extensive use of fibre products; for their construction in the complex analytic setting, as well as their basic properties, we refer to \cite[Chps.\ 0 and 3]{Fisch}. For the basic theory of Hilbert-Samuel functions and multiplicities in the complex analytic setting, see \cite{HIO88}.

\begin{proposition}\label{Zariski-ups-cid}
    The functions $y \mapsto e(\mathrm{Jac}(X_y))-\mathrm{cid}(X_y)$ and $y \mapsto \mathrm{cid}(X_y)$ are Zariski upper semicontinuous.
\end{proposition}
\begin{proof} Because Thm.\ \ref{main mpt} holds for $y \in U$, where $U$ is Zariski open subset of $Y$, the function $y \mapsto  e(\mathrm{Jac}(X_y))-\mathrm{cid}(X_y)$ is constant for $y \in U$. Again, by Thm.\ \ref{main mpt} $$e(\mathrm{Jac}(X_0)) - \mathrm{cid}(X_0) \geq  e(\mathrm{Jac}(X_y))-\mathrm{cid}(X_y)$$ for $y \in U$ because
$\mathrm{deg}_{Y} \Gamma_{h}^{k}(X) \geq 0$. To prove the full strength of Zariski upper semicontinuity we need to show that the inequality above holds for any $y  \in (Y,0)$. We proceed in the spirit of \cite[Sct.\ 5]{Rangachev}. Fix $y' \in Y$. Because $Y$ is smooth, we can find a smooth curve $Y'$ passing through $y'$ such that $Y' \setminus\{y'\}$ lies in $U$. Consider the family $h':X':=X \times_{Y}Y' \rightarrow Y'$. Let $x_1, \ldots, x_w$ be the singular points of $X'_{y'}=X_{y'}$. Let $X_i' \rightarrow Y'$ be the induced embedded deformation of $(X'_{y'},x_i)$ from $X'\rightarrow Y'$. Let $y \in Y'$ with $y' \neq y$ be sufficiently small with respect to all families $X_i' \rightarrow Y'$ so that Thm.\ \ref{main mpt} holds for each one of them.

Denote by $S_h$ the relative singular locus of $h: (X,0) \rightarrow (Y,0)$.
By shrinking the representatives of $(X,0)$ and $(Y,0)$ we can ensure that $S_h \rightarrow Y$ is finite. Thus $S_{h'}: =S_h \times_{Y}Y' \rightarrow Y'$ is finite. By shrinking $Y'$ if necessary, set-theoretically, we have $(S_{h'})_{y'}=\{x_1, \ldots, x_w \}$ and each singular point of $X_y$ with $y \in Y' \setminus \{y'\}$ is contained in $(X'_i)_y$ for some $i \in \{1, \ldots, w\}$. Applying Thm.\ \ref{main mpt} for $i=1, \ldots, w$, we get 

\begin{equation}\label{mpt-Y'}
e(\mathrm{Jac}(X_{y'},x_i)) - \mathrm{cid}(X_{y'},x_i)-\bigl( e(\mathrm{Jac}((X'_{i})_y))-\mathrm{cid}((X_{i}')_y)\bigl)=\mathrm{deg}_{Y'}  \Gamma_{h'}^{1}(X_i').
\end{equation}

In the proof of Thm.\ \ref{MPT}, which is used in the proof of Thm.\ \ref{main mpt}, we make use of a reduction of the module $J_h(Z)$ along $S_h$. The choice of this reduction defines $\Gamma_{h}^{k}(X)$ and the relative polar curves $\Gamma_{h'}^{1}(X_i',x_i)$ for $i=1, \ldots, w$. Thus the branches of each $\Gamma_{h'}^{1}(X_i',x_i) \times_{Y'} (Y' \setminus \{y'\})$ are contained in the branches of $\Gamma_{h}^{k}(X) \times_{Y} U$ that merge at $x_i$. In particular, 

$$\mathrm{deg}_{Y} \Gamma_{h}^{k}(X) \geq \sum_{i=1}^{w}\mathrm{deg}_{Y'}  \Gamma_{h'}^{1}(X_i').$$
Note that 
\begin{align*}\label{system}
&\sum_{i=1}^{w}(e(\mathrm{Jac}(X_{y'},x_i)) - \mathrm{cid}(X_{y'},x_i))=e(\mathrm{Jac}(X_{y'}))-\mathrm{cid}(X_{y'}), \\
&\sum_{i=1}^w(e(\mathrm{Jac}((X'_{i})_y))-\mathrm{cid}((X_{i}')_y))=e(\mathrm{Jac}(X_y))-\mathrm{cid}(X_y).
\end{align*}
Thus, by summing up the identities (\ref{mpt-Y'}), we get

$$e(\mathrm{Jac}(X_{y'}))-\mathrm{cid}(X_{y'})-\bigl( e(\mathrm{Jac}(X_{y}))-\mathrm{cid}(X_{y})\bigl)=\sum_{i=1}^{w}\mathrm{deg}_{Y'}  \Gamma_{h'}^{1}(X_i').$$
Subtracting the last identity from (\ref{mpt-cid}), we obtain

$$e(\mathrm{Jac}(X_0)) - \mathrm{cid}(X_0)-\bigl( e(\mathrm{Jac}(X_{y'}))-\mathrm{cid}(X_{y'})\bigl)=\mathrm{deg}_{Y}  \Gamma_{h}^{k}(X)-\sum_{i=1}^{w}\mathrm{deg}_{Y'}  \Gamma_{h'}^{1}(X_i') \geq 0.$$
This proves the Zariski upper semicontinuity of the first function. 

By (\ref{flatness-cid}) we have $\mathrm{cid}(X_0) \geq \mathrm{cid}(X_y)$ for $y \in (Y,0)$. We will show that there exists a Zariski open subset $U'$ of $Y$ such that $y \mapsto e(\mathrm{Jac}(X_y))$ is constant for $y \in U'$. Denote by $B$ the blowup of $X$ with center the relative Jacobian ideal $\mathrm{Jac}_h(X)$ and by $D$ the exceptional divisor of the blowup. Because $(S_h,0) \rightarrow (Y,0)$ is finite, the map $d: D \rightarrow Y$  is proper. Denote by $D_{\mathrm{vert}}$ the union of components of $D$ whose images under $d$ are proper closed subsets of $Y$. Set $U':=Y \setminus d(D_{\mathrm{vert}})$. Set $B(y):=\mathrm{Bl}_{\mathrm{Jac}(X_y)}X_y$. Denote by $D(y)$ the exceptional divisor of $B(y)$ and by $B_y$ and $D_y$ the fibers of $B$ and $D$ over $y$, respectively. Next, we follow the analysis in the proof of the Excess-Degree Formula \cite[Thm.\ 2.1]{Rangachev}. Denote by $[B(y)]$ and $[D(y)]$ the fundamental cycles of $B(y)$ and $D(y)$. We have $D \cap B(y)=D(y)$; so the intersection product $D.[B(y)]$ is equal to $[D(y)]$. But $B(y)$ is a subscheme of $B_y$ and $B_y \setminus D_y  = B(y) \setminus D(y)$. By assumption $\mathrm{codim}(D_y,B_y)=1$ for each $y \in U'$. Thus $[B_y]=[B(y)]$ for $y \in U'$ and so $D.[B(y)]=[D_y]$. Therefore, $[D_y]=[D(y)]$. But $e(\mathrm{Jac}(X_y))=\int [D(y)]$ (\cite[Ex.\ 4.3.4]{Ful} and \cite{Ramanujam}). Applying conservation of number \cite[Prop.\ 10.2]{Ful} (cf.\ \cite[App.\ A]{Massey} in the complex analytic setting) to the $k$-cycle $[D \times_{Y} U']$ and the proper morphism $D \times_{Y} U' \rightarrow U'$, we get that $y \mapsto \int [D_y]$ is constant and hence $y \mapsto e(\mathrm{Jac}(X_y))$ is constant for $y \in U'$. By (\ref{main mpt}) it follows that $y \mapsto \mathrm{cid}(X_y)$ is constant for all $y \in U \cap U'$.
\end{proof}
We recall the definition of the Milnor number of a linear function on $X_0$ (\cite{MvS01}). Let $F \colon (\mathbb{C}^n,0) \rightarrow (\mathbb{C},0)$ be a general linear functional. Set $f:=F|_{X_0}$. Let $n \colon \overline{(X_0,0)} \rightarrow (X_0,0)$ be the normalization morphism. Denote by $\omega_{X_0,0}$ the dualizing module of Grothendieck. 
Consider the composition of the maps $\mathcal{O}_{X_0,0} \rightarrow \Omega_{X_0,0}^{1}$ given by $df$ and the map formed from the compositions $\Omega_{X_0,0}^{1}\rightarrow n_{*}\Omega_{\overline{X_0,0}}^1 \cong n_{*}\omega_{\overline{X_0,0}} \rightarrow \omega_{X_0,0}$.
This produces an injective map $\mathcal{O}_{X_0,0} \rightarrow \omega_{X_0,0}$ with cokernel $\mathcal{M}_{f}$:
$$0 \rightarrow \mathcal{O}_{X_0,0} \rightarrow \omega_{X_0,0} \rightarrow \mathcal{M}_f \rightarrow 0.$$
Note that the module $\mathcal{M}_{f}$ is of finite length.
\begin{definition}
The Milnor number of $f:=F|_{X_0}:X_0 \rightarrow (\mathbb{C},0)$ is defined as
$$\mu_0(f)=\mu(F|X_0):=\dim \mathcal{M}_f.$$
\end{definition}
The definition in \cite{MvS01} is valid more generally for a finite function germ $f:X_0 \rightarrow (\mathbb{C},0)$. In the family setup
define $\mu_{y}(f)$ to be the sum of the Milnor numbers of $f$ locally at each $(X_y,x)$ with $x \in (X_y)_{\mathrm{sing}}$. In \cite[Cor.\ 3.2]{NT08} it is shown that $\mu_y(f)=\mu(X_y,x)+m(X_y,x)-1$. Thus by \cite[Thm.\ 3.9]{BR24}, we obtain that 
\begin{equation}\label{algebraic MF}
\mu_y(f)=e(\mathrm{Jac}(X_y)) - \mathrm{cid}(X_y).
\end{equation}
This formula can be used to compute $\mu_y(f)$ in case the parametrization of $(X_y,x)$ with $x \in (X_y)_{\mathrm{sing}}$ is not known. Thm.\ \ref{main mpt} gives 
\begin{equation}\label{change in MF}
\mu_0(f)-\mu_y(f)=\mathrm{deg}_{Y}  \Gamma_{h}^{k}(X).
\end{equation}
Alternatively, for one-parameter families one can establish (\ref{change in MF})  by noting that $\mu_0(f)=\mu(F|X_y)$ by \cite[Prop.\ 2.2]{MvS01} and $\mu(F|X_y)=\mu_y(f)+\mathrm{deg}_{Y}  \Gamma_{h}^{k}(X)$ by definition and \cite[Rmk.\ 2.3]{MvS01}. As an immediate corollary to Prop.\ \ref{Zariski-ups-cid}, we obtain
\begin{corollary}
The function $y \mapsto \mu_y(f)$ is Zariski upper semicontinuous.
\end{corollary}

Using the generic smoothness of $\Gamma_{h}^{k}(X)$ (see Sct.\ \ref{polar}), Pablo Portilla Cuadrado and the third author proved the following result. 

\begin{proposition}\label{relative Morse}
    Let $F:(\mathbb{C}^n,0) \rightarrow (\mathbb{C},0)$ be a general linear functional. There exists a Zariski open subset $U_F\subset Y$ such that $F|_{(X_y)_{\mathrm{sm}}}$ with $y \in U_F$ has only Morse critical points. 
\end{proposition}
\begin{proof}
Adopt the setting of Sct.\ \ref{polar}. The relative conormal space $C_h(X)$ comes with two projection maps to $X$ and $\check{\mathbb{P}}^{n-1}$ which we denote by $c_h$ and $\lambda$, respectively. Recall that $\Gamma_h^k(X)$, the relative polar variety of dimension $k$, is defined as $\Gamma_h^k(X):=c_h(\lambda^{-1}(H))$ where $H\in\check{\mathbb{P}}^{n-1}$ is a general point. Set $H:=[\alpha_1: \cdots : \alpha_n]$. Define  $F:(\mathbb{C}^n,0) \rightarrow (\mathbb{C},0)$ as $F(x_1, \ldots, x_n): = \sum_{i=1}^n \alpha_ix_i$. Then set-theoretically $\Gamma_h^k(X)$ is precisely the closure of the critical points of $F |_{(X_y)_{\mathrm{sm}}} \rightarrow (\mathbb{C},0)$ as $y$ varies through $Y$. Set $f:=F|_{X}=F \circ \Phi$ where $\Phi$ is the composition of maps $X \hookrightarrow \mathbb{C}^n \times Y \rightarrow \mathbb{C}^n$. The equations for the subset $\Gamma_h^k(X)$ of $X$ are given by the augmented relative Jacobian ideal $\mathrm{Jac}_h(X,f)$ in $\mathcal{O}_X$. 

Furthermore, by Kleiman's transversality, a general $H$ guarantees that $\Gamma_h^k(X)$ is reduced as explained in Sct.\ \ref{polar}. Moreover, again by Kleiman's transversality result \cite[Rmk.\ 6 and 7]{Kl74} for a general $F$, the subspace of $X$ defined by $\mathrm{Jac}_h(X,f)$ is generically reduced. 

Note that $\codim(c_h^{-1}(S_h), C_h(X))=1$. Thus for a general $H$ we have $\dim c_h(c_h^{-1}(S_h) \cap \lambda^{-1}(H))=k-1$. So $\dim(\Gamma_h^k(X) \cap S_h) = k-1$. By generic smoothness applied to $\Gamma_h^k(X)$ and the morphism $\Gamma_h^k(X) \rightarrow Y$, there exists a Zariski open subset $U_F$ of $Y$ such that $\Gamma_{U_F}:= \Gamma_{h}^{k}(X) \times_{Y}U_F \rightarrow U_F$ is smooth with smooth fibers and $\Gamma_{U_F} \cap S_h = \emptyset$. We can further shrink $U_F$ to ensure that $f|_{\Gamma_{U_F}}: \Gamma_{U_F} \rightarrow \mathbb{C}$ is smooth. Let $x_0 \in \Gamma_{U_F}$.  Set $y_0:=h(x_0)$. Let $V$ be a small Euclidean neighborhood of $x_0$ in $X\setminus S_h$ with $h(V) \subset U_F$. Pick local coordinates $(u,t_1, \ldots, t_k)$ for $V$, where $u$ is a uniformizing parameter for $(X_{y_0},x_0)$. Consider the map $(f,h): X \rightarrow \mathbb{C} \times Y$ defined by $x \mapsto (f(x),h(x))$. Let $g: V \rightarrow \mathbb{C} \times U_F$  be the restriction of $(f,h)$ to $V$. Because the formation of Fitting ideals commutes with base change, the ideal sheaf $C_{g,x_0}$ of the critical locus $(C_{g},x_0)$ of $g$ is defined by $\mathrm{Jac}_{h}(X,f) \otimes_{\mathcal{O}_X} \mathcal{O}_{V,x_0}$ (see \cite[Chp.\ 4 A]{Loo84}), which is a radical ideal. Therefore, by \cite[Prop.\ 4.2]{Loo84} after a change of coordinates in $V$ and $\mathbb{C} \times U_F$, the function $g$ is given by $g(u,y_1, \ldots, y_k)=(u^2,y_1, \ldots, y_k)$, which proves that $F|_{(X_y)_{\mathrm{sm}}}$  with $y \in U_F$ has only Morse critical points.
\end{proof}
Note that the proof above reveals that  Prop.\ \ref{relative Morse} is valid more generally for families $X \rightarrow Y$ with equidimensional and generically reduced fibers of arbitrary positive dimension. 


As in Prp.\ \ref{Euler}, we choose representatives for $(X,0)$ and $(Y,0)$ as follows. Assume $(X_0,0)$ is contained in an open ball $\mathring{B_0}=\mathring{B}(0,\epsilon)\subset(\C^n,0)$. Identify $Y$ with an open $k$-ball. View $(X,0)$ as analytic subspace of $\mathring{B_0} \times (Y,0)$.  Write $\chi(X_y)$ for the Euler characteristic of the Euclidean closure of $X_y$ in $B_0 \times \{y\}$.
Set $\mu_y:=\mu(X_y)=\sum_{x \in (X_y)_{\mathrm{sing}}}\mu(X_y,x)$.




\begin{theorem}\label{ZU of Milnor}
For $\mathring{B_0}$ and $Y$ sufficiently small we have 
\begin{equation}\label{Gen-BG}
\mu_0-\mu_y = 1-\chi(X_y)
\end{equation}
for $y$ in a Zariski open subset $U_\chi$ of $Y$. Moreover, the function $y \mapsto \mu_y$ is Zariski upper semicontinuous.
\end{theorem}
\begin{proof} Denote by $m(X_y,x)$ the multiplicity of $x \in X_y$. Set $m_y:=m(X_y,0)$. Thm.\ \ref{main mpt} and \cite[Thm.\ 3.9]{BR24} give us

\begin{equation}\label{Milnor diff.}
\mu_0-\mu_y = \mathrm{deg}_{Y}  \Gamma_{h}^{k}(X) + \sum_{x \in (X_y)_{\mathrm{sing}}} (m(X_y,x)-1) - (m_0-1).
\end{equation}
for $y$ in a Zariski open subset $U_{\Gamma}$  of $Y$. 

{\bf Generic constancy.} Consider the ideal  $J':=\sqrt{\mathrm{Jac}_{h}(X)}$ in $\mathcal{O}_{X,0}$. Set $S_h^r:=\mathbb{V}(J').$ Denote by $J'(y)$ the image of $J'$ in $\mathcal{O}_{X_y}$. Because $S_h^r$ is reduced, by generic smoothness (\cite[\href{https://stacks.math.columbia.edu/tag/01V9}{Tag 01V9}]{Stacks}), we get that the fibers $(S_h^r)_y$ are smooth for $y$ in a Zariski open dense subset $U_{S_h^r}$ of $Y$. Thus, for  $y \in U_{S_h^r}$ we have $J'(y) = \cap_{x \in (X_y)_{\mathrm{sing}}}\mathfrak{m}_{y,x}$ where $\mathfrak{m}_{y,x}$ is the maximal ideal of $x$ in $\mathcal{O}_{X_y}$. Set $e(J'(y)):=\sum_{x \in (X_y)_{\mathrm{sing}}}e(J'(y)\mathcal{O}_{X_y,x})$ where  $e(J'(y)\mathcal{O}_{X_y,x})$ is the Hilbert-Samuel multiplicity of $J'(y)\mathcal{O}_{X_y,x}=\mathfrak{m}_{y,x}$ in $\mathcal{O}_{X_y,x}$. Following the proof of Prop.\ \ref{Zariski-ups-cid}, we obtain that $y \mapsto e(J'(y))$ is constant for $y$ in a Zariski open dense subset $U_{J'}$ of $Y$. Thus for $y \in U_m:=U_{S_h^r} \cap U_{J'}$ the function $y \mapsto \sum_{x \in (X_y)_{\mathrm{sing}}} (m(X_y,x)-1)$ is constant. Finally, we conclude that $y \mapsto \mu_y$ is constant for $y \in U_{\mu}:=U_{\Gamma} \cap U_m$.

{\bf Connectedness.}
Next, we show that there exists a Zariski open subset $U_c$ in $Y$ such that the fibers $X_y$ are connected. In fact, what is needed in the proof here is the existence of a single connected fiber $X_y$ for $y$ in some Zariski open subset of $U_\mu$. We give an outline of a proof that appears in \cite{PR25}. In the complex analytic setting, one can select $U_c$ to be the Zariski open subset over which $S_{h}^r\times_Y U_c \rightarrow U_c$ is a holomorphic covering and the Whitney conditions hold locally at the points $(S_{h}^r)_y$. This gives a local trivialization of $X \times_Y U_c \rightarrow U_c$. By a monodromy argument (\cite[Thm.\ 8.2]{Gre17}) one can find a smooth curve $(T,0)$ in $(Y,0)$ with $T\setminus \{0\} \subset U_c$ such that $X \times_Y T \rightarrow T$ has connected fibers. Thus $X_y$ with $y \in U_c$ has connected fibers.

In the algebraic setting one can proceed as follows. Let $\eta$ be the generic point of $Y$. By \cite[\href{https://stacks.math.columbia.edu/tag/054F}{Tag 054F}]{Stacks} there exists a DVR $R$ and a morphism $Y':=\mathrm{Spec}(R) \rightarrow (Y,0)$ that maps the generic point $\eta'$ and the special point $s$ of $Y'$ to $\eta$ and $0$, respectively. Base change to $X':=X \times_{Y}Y'$. By adapting the proof of \cite[\href{https://stacks.math.columbia.edu/tag/055J}{Tag 055J}]{Stacks} to our setting taking into account that residue field $\kappa(s)$ is algebraically closed, we deduce that $X_\eta'=X_\eta$ is geometrically connected. Applying \cite[\href{https://stacks.math.columbia.edu/tag/055G}{Tag 055G}]{Stacks} it follows that $X_y$ is geometrically connected and thus connected for $y$ in a Zariski open subset $U_c$ of Y.

{\bf Euler characteristic and upper semicontinuity.} Let $(\mathbf{x},\mathbf{y}):=(x_1,\ldots,x_n,y_1, \ldots, y_k)$ be coordinates for $\mathring{B_0} \times (Y,0)$. Let $F \colon \mathring{B_0} \rightarrow (\mathbb{C},0)$ be the linear functional $\mathbf{x} \mapsto \sum_{i=1}^{n} \alpha_i x_i$. Below we specify the genericity conditions on the $\alpha_i$s, and we also specify our choices for $\mathring{B_0}$ and a representative of $(Y,0)$:
\begin{itemize}

\item[(i)] The $\alpha_i$ are general so that $\Gamma_{h}^{k}(X)$ is reduced. Thus by Prop.\ \ref{relative Morse} there exists a Zariski open subset $U_F\subset Y$ such that $F_{|(X_y)_{\mathrm{sm}}}$ with $y \in U_F$, has only Morse critical points. 

\item[(ii)] Pick $y \in U_m$. Pick $\alpha_i$ general so that for each $x \in (S_{h}^r)_y$ we have $F^{-1}(F(x)) \cap E_{X_y,x} = \{x\}$ where $E_{X_y,x}$ is the tangent cone of $X_y$ at $x$. Algebraically, this condition means that $(\sum_{i=1}^n\alpha_i(x_i-z_i))$ generates a reduction of the maximal ideal of $\mathcal{O}_{X_y,x}$, where $x=(z_1,\ldots,z_n)$ and $x \in (S_{h}^r)_y$. Because the exceptional divisor of $\mathrm{Bl}_{J'}X$ has no vertical components whose images in $Y$ intersect $U_m$, by Teissier's Principle of Specialization of Integral Dependence \cite{T73}, it follows that $(\sum_{i=1}^n\alpha_i(x_i-z_i))$ will generate a reduction of the maximal ideals of $\mathcal{O}_{X_y,x}$ for each $y \in U_m$ and $x \in (S_{h}^r)_y$ after possibly shrinking $U_m$. Finally, assume the $\alpha_i$s are general so that $(\sum_{i=1}^n\alpha_ix_i)$ generates a reduction of the maximal ideal of $\mathcal{O}_{{X_0},0}$.

\vspace{.2cm}
Fix an $F$ satisfying $\rm{(i)}$ and $\rm{(ii)}$. Next we describe the conditions on $\mathring{B_0}$ and $(Y,0)$.
\vspace{.01cm}

\item[(iii)] The restriction of the map $\tilde{F}: \mathring{B_0} \times (Y,0) \rightarrow (\mathbb{C},0) \times (Y,0)$ to $X$ given by $\tilde{F}(\mathbf{x},\mathbf{y})=(F(\mathbf{x}),\mathbf{y})$ is a finite map after possibly shrinking $\mathring{B_0}$ and $Y$ and taking sufficiently small representative of $(\mathbb{C},0)$. Set $f:=\tilde{F}|X$ and $f_y:=F|_{X_{y}}$. Note that $\mathcal{O}_{{X},0}/f$ is Cohen--Macaulay and that our choices for $\mathring{B_0}$ and $Y$ ensured that $\mathcal{O}_{{X},0}/f$ is a finite $\mathcal{O}_{Y,0}$-module. Thus, by flatness $\dim_{\mathbb{C}}\mathcal{O}_{X_0,0}/(f_0)=\dim_{\mathbb{C}}\mathcal{O}_{X_y}/(f_y)$ for all $y \in Y$.

\item[(iv)] $\mathring{B_0}$ is sufficiently small and $Y$ is sufficiently small with respect to $\mathring{B_0}$ so that $(\mathring{B_0}\times \{0\}) \cap (X_0)_{\mathrm{sing}}=\{0\}$,  $(\partial B_0 \times \{y\}) \cap (X_y)_{\mathrm{sing}} = \emptyset$, and $S_h^r \rightarrow Y$ is finite.
Furthermore, one can take $\mathring{B_0}$ small enough so that $X_0$ is contractible, which by Prop.\ \ref{prop:Euler_char} and our choice of $F$ in \rm{(ii)} is equivalent to $\mathring{B_0}\times \{0\} \cap \Gamma_{h}^{k}(X) = \{0\}$. 

\item[(v)] Choose $\mathring{B_0}$ so that $\partial B_0$ intersects $X_0$ transversally; by openness of transversality after possibly shrinking $Y$ we have that $\partial B_0 \times \{y\}$ intersects $X_y$ transversally.

 \end{itemize}
Fix $y \in Y$. Consider the trivial family $X_y \times (\mathbb{C},0) \rightarrow (\mathbb{C},0)$, where for $(\mathbb{C},0)$ we choose the representative fixed in \rm{(iii)}.
We have $X_y \times (\mathbb{C},0) \subset \mathring{B_0} \times (\mathbb{C},0)$. Let $(x_1, \ldots,x_n,u)$ be coordinates on $\mathring{B_0} \times (\mathbb{C},0)$. View $g_u: = \sum_{i=1}^n \alpha_{i}x_i - u$ as an element in $\mathcal{O}_{X_y \times (\mathbb{C},0)}$. Set  $\mathbb{V}(g_u):=\mathrm{Specan}(\mathcal{O}_{X_y \times (\mathbb{C},0)}/(g_u))$. Because $X_y \times (\mathbb{C},0)$ is Cohen--Macaulay, so is $\mathbb{V}(g_u)$. Because $f_y: X_y \rightarrow (\mathbb{C},0)$ is finite by \rm{(iii)} we have that $\mathcal{O}_{X_y \times (\mathbb{C},0)}/(g_u)=\mathcal{O}_{X_y}[u]/(g_u)=\mathcal{O}_{X_y}$ is a finite $\mathcal{O}_{\mathbb{C},0}$-module. By Bertini's theorem $\dim_{\mathbb{C}}\mathcal{O}_{X_y}/g_c=\#(f^{-1}(c))=\#(F^{-1}(c) \cap X_y)$ for $c$ generic. Because  $\mathbb{V}(g_u)\rightarrow (\mathbb{C},0)$ is finite and flat, we get that 
$$\dim_{\mathbb{C}} \mathcal{O}_{X_y}/f_y=\dim_{\mathbb{C}}\mathcal{O}_{X_y}/g_0=\dim_{\mathbb{C}}\mathcal{O}_{X_y}/g_c=\# (F^{-1}(c) \cap X_y)=\mathrm{deg}(F|X_y)=:d_y$$
for generic $c \in (\mathbb{C},0)$. By our choices for $F$ and $(Y,0)$ in (ii) and (iii) we have



\begin{equation}\label{mult. diff.}
m_0=\dim_{\mathbb{C}}\mathcal{O}_{X_0,0}/(f_0)=\dim_{\mathbb{C}}\mathcal{O}_{X_y}/(f_y)=d_y.
\end{equation}
Combining (\ref{Milnor diff.}) with (\ref{mult. diff.}), we obtain
\begin{equation}\label{Milnor diff.2}
\mu_0-\mu_y = \mathrm{deg}_{Y}  \Gamma_{h}^{k}(X) + \sum_{x \in (X_y)_{\mathrm{sing}}} (m(X_y,x)-1) - (d_y-1).
\end{equation}
for $y \in U_\Gamma$. Further, restrict $y$ to $U_\chi:=U_\Gamma \cap  U_m \cap U_F$. By (i) and (ii), 
$F$ satisfies the genericity conditions specified in the proof of  Prop.\ \ref{prop:Euler_char}. Note that $\mathrm{deg}_{Y} \Gamma_{h}^{k}(X)=\#\mathrm{Crit}(F|_{(X_y)_{\mathrm{sm}}})$. Therefore, by Prop.\ \ref{prop:Euler_char} applied with $\mathcal{X}=X_y$, we can rewrite (\ref{Milnor diff.2}) as 
$$
\mu_0-\mu_y = 1-\Big(d_y-\Big(\#\mathrm{Crit}(F|_{(X_y)_{\mathrm{sm}}})
+\sum_{x\in(X_y)_{\mathrm{sing}}}(m_x-1)\Big)\Big)=1-\chi(X_y)
$$
proving (\ref{Gen-BG}). The inequality $\mu_0 \geq \mu_y$ with $y \in U_{\chi} \cap U_c$  follows from 
Prop.\ \ref{nonneg}. However, to prove the full strength of Zariski upper semicontinuity we need to show that $\mu_0 \geq \mu_y$ for any $y \in (Y,0)$. To do so, we need to allow more generally that $(Y,0)$ is reduced and irreducible. Assume that this is the case. 

First, we want to show that $y \mapsto \mu_y$ is constant for $y$ in a Zariski open subset of $Y$ following the above analysis. We will show that  $U_\mu=U_\Gamma \cap U_m$ is such a Zariski open subset. Note that $S_h^r$ over $U_m$ is a holomorphic covering. Write $S_h^r \times_{Y} U_m: = \bigsqcup_{i=1}^{l}S^{(i)}$. So $(S_h^r)_y=\{s^{(1)}_y, \ldots, s^{(l)}_y\}$. By construction of $U_m$, the multiplicities $y \mapsto m(X_y,s^{(i)}_y)-1$ are constant for each $i=1, \ldots l$. By construction of $U_{\Gamma}$ (see the proof of Thm.\ \ref{MPT}) we have that $c_h^{-1}(x)$ is of the minimal dimension $n-2$ for each $x \in X_y$ with $y \in U_{\Gamma}$, where $c_h: C_h(X) \rightarrow X$ is the structure morphism. Thus the $k$-dimensional polar variety $\Gamma^{k}_h(X,s^{(i)}_y)$ of $(X,s^{(i)}_y) \rightarrow (Y,y)$ is empty for each $i=1, \ldots, l$ and $y \in U_{\mu}$. Applying (\ref{Milnor diff.}) to each $(X,s^{(i)}_y) \rightarrow (Y,y)$, we conclude that $y \mapsto \mu(X,s^{(i)}_y)$ is constant for $y \in U_{\mu}$. But $\mu_y=\sum_{i=1}^{l}\mu(X,s^{(i)}_y)$. So $y \mapsto \mu_y$ is constant for $y \in U_\mu$.

Second, we will show that $\mu_0 \geqslant \mu_y$ for all $y\in U_{\mu}$. By prime avoidance we can find an analytically irreducible reduced curve $(Y',0)\subset(Y,0)$
such that $Y'\setminus\{0\}\subset U_\mu$ after possibly shrinking $Y$. Let
$n':\overline{Y'}\to (Y',0)$ be the normalization. It is a  homeomorphism because $(Y',0)$ is analytically irreducible. Set $y_0':=(n')^{-1}(0)$. Consider the flat family $h':X':=X\times_Y\overline{Y'}\to\overline{Y'}$.
The special fiber $(X_{y_0}',y_0')=(X_0,0)$ is reduced. By what we already proved we have $$\mu(X_{y_0}',y_0')-\mu(X_{y'}') = 1- \chi(X_{y'}') \geq 0$$ for $y' \in (\overline{Y'},y_0')$. Because $X_{y'}'=X_y$ for $y \in Y'$ with $(n')^{-1}(y)=y'$, we obtain $\mu_0=\mu(X_0,0) \geq \mu_y=\mu(X_y)$ for $y \in Y'$. But $y \mapsto \mu_y$ is constant for $y \in U_{\mu}$ and by construction $Y' \setminus \{0\} \subset U_{\mu}$. Thus $\mu_0 \geq \mu_y$ for $y \in U_\mu$.

Third, it remains to show that $\mu_0 \geqslant \mu_y$ for $y$ in a neighborhood of $0$.
We prove this by induction on $\dim Y$. The subset $V=Y\setminus U_\mu$ is an analytic proper
subset of $Y$. If $\dim Y=1$, then $V$ is a finite set of points and thus we only need to
shrink $Y$ to avoid points in $V$ that are different from $0$. If $\dim Y>1$, decompose $V$ into analytically irreducible
components $V=\bigcup_{j=1}^q V_j$ and equip each $V_j$ with its canonical reduced
structure. We have  $\dim V_j < \dim Y$. Apply the induction hypothesis to the families $h_j:X_j:=X\times_Y V_j\to V_j$. Clearly,  $(X_j)_v=X_v$ for $v \in V$. There are representatives $(Q_j,0)$ of $(Y,0)$ with $j=1,\ldots,q$, such that
 $\mu_0\geqslant\mu_v$ for $v\in Q_j\cap V_j$. Set $Q=\bigcap_{j=1}^q Q_j$. Then
$(Q,0)$ is a representative of $(Y,0)$. We already showed that $\mu_0 \geq \mu_y$ for $y \in U_\mu$. Since
$(Q \cap U_{\mu})\cup\bigcup_{j=1}^q(Q\cap V_j)= Q$
we conclude that for all $y\in  Q$ we have $\mu_0\geqslant\mu_y$. The proof is now complete. 
\end{proof}

{\bf A Topoligical Approach.} Below we provide a topological proof of the final part of the upper semicontinuity statement, namely that $\mu_0 \geq \mu_{y}$ for $y$ sufficiently close to $0$. We consider the case when $X_0$ and $X_y$, with $y \in U_\chi$, have the same number of irreducible components, and where the irreducible components of each $X_y$ meet at a single point and have no other intersections. This configuration arises in Thm.\ \ref{top. equis.}, which constitutes the main application of Zariski upper semicontinuity of the Milnor number in this paper. 

Assume $Y$ is smooth. Fix a point $y' \in Y$. Let $Y'$ be a smooth curve
passing through $y'$ such that $Y' \setminus\{y'\}$ lies in $U_{\chi} \cap U_c$. Set $h':X':=X \times_{Y}Y' \rightarrow Y'$. Let $x_1, \ldots, x_w$ be the singular points of $X'_{y'}=X_{y'}$. Assume each $(X'_{y'},x_i)$ is contained in a sufficiently small open ball $\mathring{B_i}$ in $\mathbb{C}^n$ centered at $x_i$ such that $B_i \cap B_j = \emptyset$ for each $i \neq j$ with $i,j \in \{0, \ldots, w\}$. Let $X_i' \rightarrow Y'$ with $X_{i}' \subset \mathring{B_i} \times (Y',y')$ be the induced embedded deformation of $(X'_{y'},x_i)$ from $X'\rightarrow Y'$. Let $y \in Y'$ with $y' \neq y$ be sufficiently small with respect to all families $X_i' \rightarrow Y'$ so that (\ref{Gen-BG}) is valid for each one of them. For $i=1, \ldots, w$ we have 
\begin{equation}\label{Milnor-germs}
\mu(X_{y'},x_i)-\mu((X_i')_y)=1-\chi((X_i')_y).
\end{equation}
Note that $\mu_{y'}=\sum_{i=1}^{w}\mu(X_{y'},x_i)$ and $\mu_y=\sum_{i=1}^{w}\mu((X_i')_y)$. Summing up the identities (\ref{Milnor-germs}), we obtain 
\begin{equation}\label{Milnor-germs-sum}
\mu_{y'}-\mu_y=w-\sum_{i=1}^{w}\chi((X_i')_y)
\end{equation}
Subtracting (\ref{Milnor-germs-sum}) from (\ref{Gen-BG}), we get
\begin{equation}\label{Milnor-comp.}
\mu_0-\mu_{y'}=1-w - \chi(X_y)+\sum_{i=1}^{w}\chi((X_i')_y).
\end{equation}
View $X_y$ and $(X_i')_y$ as real analytic surfaces and identify them with their closures in the Euclidean topology in $B_0 \times \{y\}$ and $B_i \times \{y\}$, respectively. Let $X_y^c$ be the closure of $X_y \setminus \bigcup_{i=1}^{w}(X_i')_y$. By (\rm{iii}), $S_h^r \rightarrow Y$ is finite. Thus $S_{h'}^r: =S_h^r \times_{Y}Y' \rightarrow Y'$ is finite. So by shrinking $Y'$ if necessary we can guarantee that set-theoretically $(S_{h'}^r)_{y'}=\{x_1, \ldots, x_w \}$ and each singular point of $X_y$ is contained in $\mathring{B_i} \times \{y\}$ for some $i \in \{1, \ldots, w\}$. Observe that the intersection of  $X_y^c$ with each $(X_i')_y$ is homeomorphic to a  disjoint union of circles. Thus $\chi(X_y)=\chi(X_y^c)+\sum_{i=1}^{w}\chi((X_i')_y).$
So we can rewrite (\ref{Milnor-comp.}) as 
\begin{equation}\label{Milnor-comp.2}
\mu_0-\mu_{y'}=1-w -\chi(X_y^c).
\end{equation}
Note that $X_y^c$ is a smooth, compact (possibly disconnected) surface. Write $X_y^c=\bigsqcup_{j=1}^m (X_y^c)_j$ where $(X_y^c)_j$ are the connected components of $X_y^c$. Denote by $b((X_y^c)_j)$ the number of boundary components of $(X_y^c)_j$. By applying the genus formula for each $(X_y^c)_j$ we get $$\chi(X_y^c)=\sum_{j=1}^m\chi((X_y^c)_j)=2m-2\sum_{j=1}^m  g((X_y^c)_j)-\sum_{j=1}^mb((X_y^c)_i).$$
Thus to prove  $1-w -\chi(X_y^c) \geq 0$ we have to show that 
 $$1-w-2m + \sum_{j=1}^mb((X_y^c)_i)+ 2\sum_{j=1}^m  g((X_y^c)_j) \geq 0.$$
Because $g((X_y^c)_j) \geq 0$, it is  enough to show that 
\begin{equation}\label{genus-components}
1+\sum_{j=1}^mb((X_y^c)_j) \geq 2m + w.
\end{equation}
Suppose $w=1$. This situation occurs, for example, when the relative singular locus does not split, i.e.\ $(S_h^r)_y$ is a single point for all $y \in (Y,0)$. In that case (\ref{genus-components}) holds because $b((X_y^c)_j) \geq 2$ for each $j$. 

Now assume that $X_0$ and $X_y$, with $y \in U_\chi$, have the same number of irreducible components, and assume that the irreducible components of each $X_y$ meet at a single point and do not have other intersections. Denote by $r_0$ the number of irreducible components of $(X_0,0)$. By Ehresmann's fibration theorem applied to $(\partial B_0 \times (Y,0)) \cap X \rightarrow (Y,0)$ we get that the number of boundary components of $X_y$ is $r_0$, which by assumption is the number of irreducible components of $X_y$. Thus the boundary of each irreducible component of $X_y$ is diffeomorphic to a circle that lies on the sphere $\partial B_0 \times \{y\}$. 

Identify each $B_i \times \{y\}$ with $B_i$. Note that intersecting each irreducible component of $X_y$ with a small ball $B_i$, and removing that intersection from the component, does not disconnect it. Therefore, the connected components $(X_y^c)_j$ correspond precisely to the irreducible components of $X_y$ with certain number of open disks (up to diffeomorphism) removed from them. Without loss of generality we can assume that the point of intersection of the irreducible components of $X_y$ is contained in $B_1$. Then each $(X_y^c)_j$ has at least two boundary components: one lies of $\partial B_0$ and the other one on $\partial B_1$. Because the generic fiber $(X_{i}')_y$ is connected, each ball $B_i$ with $i>1$, intersects exactly one irreducible component of $X_y$. Therefore, $w-1$ of the connected components $(X_y^c)_j$ will have at least an extra boundary component. Thus 
$$1+\sum_{j=1}^mb((X_y^c)_j) \geq 1+(w-1)+2m =w+2m$$
proving (\ref{genus-components}). An illustration is provided in the figure below.
\begin{figure}[ht]
\includegraphics[scale=0.38]{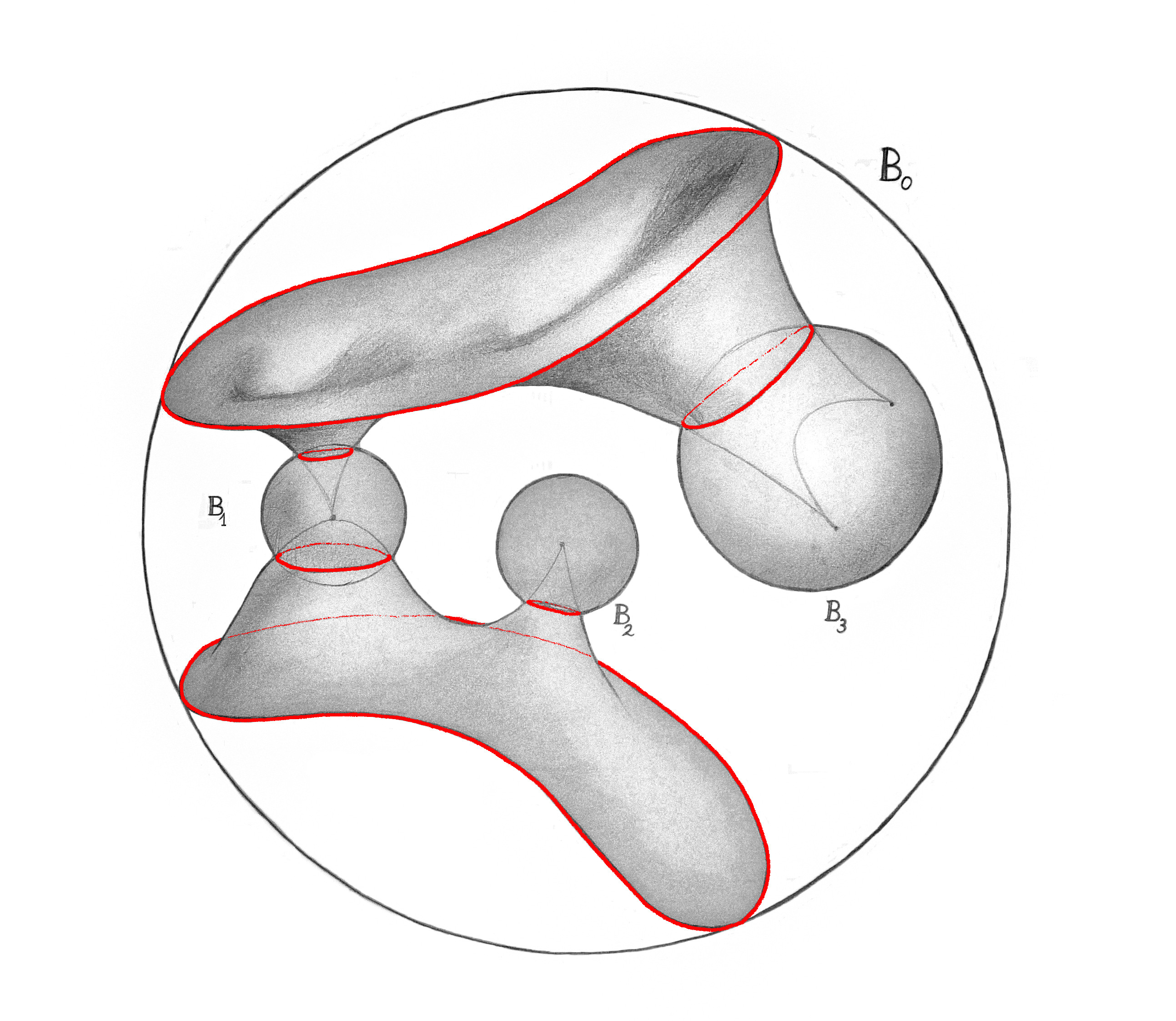}
\caption{Here $X_y$ has two irreducible components meeting at point. The boundary components of $(X_{y}^c)_1$ and $(X_{y}^c)_2 $ are colored with red.}\label{otherfig}
\end{figure}


\textbf{Remarks.} Observe that $y \mapsto \chi(X_y)$ is constant by restricting $y$ to the Zariski open subset $U_{\chi} \cap U_\mu$. We obtained this without using the standard technique of invoking the genericity of Whitney conditions and the Thom--Mather isotopy lemma to obtain a generic topological triviality, which yields constancy of the Euler characteristic over a Zariski open subset of $Y$.
For one-parameter families ($\dim Y=1$) the relation (\ref{Gen-BG}) was proved by Buchweitz and Greuel in \cite[Thm.\ 4.2.2 (2)]{BG80} using coherence of hypercohomology with respect to the direct image functor $h_{*}$, and making use of an associated hypercohomological Gysin sequence. Another proof of differential-topological nature was given in \cite[Thm.\ 1.10]{MVS}.

Denote by $\nu: \overline{X} \rightarrow X$ the normalization morphism. For each $y \in Y$ denote by $\nu_y: \overline{X_y} \rightarrow X_y$ the normalization morphism on the level of the fibers. Set $\delta_y:= \dim_{\mathbb{C}}(\nu_y)_{*}(\overline{\mathcal{O}_{X_y}})/\mathcal{O}_{X_y}$. Denote  by $r(X_y,x)$ the number of branches of $(X_y,x)$. Set $r_y:=\sum_{x \in (X_y)_{\mathrm{sing}}}r(X_y,x)$. 

If $(X,0) \rightarrow (Y,0)$ is a smoothing of a reduced and reducible curve, then $X_y$ with $y \neq 0$ is smooth and connected and $r_0>r_y=0$ for $y \neq 0$. When $(X,0) \rightarrow (Y,0)$ is the degeneration of a node into a cusp, then $1=r_0<r_y=2$ for $y \neq 0$.  Thus, the  function $y  \mapsto r_y$ is neither upper nor lower semicontinuous.
\begin{corollary}
    There exists a Zariski open subset $U$ of $Y$ such that $y \mapsto r_y$ is constant.
\end{corollary}
\begin{proof}
Note that $$r_y = 2 \delta_y-\mu_y+\#(X_y)_{\mathrm{sing}}. $$ The Zariski upper semicontinuity of the Milnor number $\mu_y$ was established above. Because $\overline{X}$ is smooth in codimension one, the image in $Y$ of the relative singular locus of $\overline{X} \rightarrow (Y,0)$ is of  codimension at least one. Thus for generic $y$ we have $\overline{X}_y=\overline{X_y}$. By \cite[Thm.\ 5.6]{CL06}) $y \mapsto \delta_y$ is constant for $y$ in a Zariski open subset $U_\delta$ of $Y$. The constancy of $y \mapsto \#(x \in (X_y)_{\mathrm{sing}})$ over a Zariski open subset of $Y$ follows from the generic smoothness of $S_h^r \rightarrow Y$. Combining these results with the identity above finishes the proof.
\end{proof}
\begin{corollary}[Buchweitz-Greuel]\label{BG ineq}
    We have $\mu_0-\mu_y \geq \delta_0-\delta_y$ for $y$ in a Zariski open subset of $Y$.
    \end{corollary}
\begin{proof} Consider the Zariski open subset $U_\delta \cap U_{\mu}$ of $Y$ over which $\delta_y$ and $\mu_y$ are both constant. Let $Y'$ be a smooth curve in $Y$ such that $Y' \setminus \{0\} \subset U_\delta \cap U_\mu$.  \cite[Thm.\ 4.2.2 (3)]{BG80} 
(which is essentially (\ref{Gen-BG}) applied to $\overline{X \times_{Y} Y'} \rightarrow Y'$) yields $\mu_0-\mu_y \geq \delta_0-\delta_y$ for $y \in Y'$ which is valid more generally for $y \in U_{\delta} \cap U_{\mu}$.
\end{proof}

\section{Equisingularity of curves}
\subsection{Strong simultaneous resolution}\label{simultaneous}
We begin with a result about the delta invariant. 

\begin{proposition}[Chiang-Hsieh--Lipman]\label{delta trivial}
 Let $(X,0) \rightarrow (Y,0)$ be a flat family of reduced curves with $Y$ smooth. Suppose $\delta_0 = \delta_y$ for $y$ in a Zariski open subset $U$ of $Y$. Then $y \mapsto \delta_y$ is constant after possibly shrinking $Y$.
\end{proposition}
\begin{proof} The proof is essentially the same as the second half of the proof of \cite[Thm.\ 5.6]{CL06}. Denote by $w:=h\circ \nu$, where $\nu: \overline{X} \rightarrow X$ is the normalization map. Denote by $K_y$ the fraction field of $\mathcal{O}_{Y,y}$. Then $y \mapsto \dim_{K_y}(w_{*}(\overline{\mathcal{O}_X}/\mathcal{O}_X)_y \otimes_{\mathcal{O}_{Y,y}}K_y)$ is constant, where $(\overline{\mathcal{O}_X}/\mathcal{O}_X)_y$ is the stalk of $w_{*}(\overline{\mathcal{O}_X}/\mathcal{O}_X)$ at $y$. There exists a sequence of points $y_1, y_2, \ldots$ approaching $0$ such that $\overline{X}_{y_i}=\overline{X_{y_i}}$ locally at $y_i$ in $U$. In particular, 
$$\delta_{y_i}=\dim_{K_{y_i}}(w_{*}(\overline{\mathcal{O}_X}/\mathcal{O}_X)_{y_i} \otimes_{\mathcal{O}_{Y,y_i}}K_{y_i})=\dim_{K_{0}}(w_{*}(\overline{\mathcal{O}_X}/\mathcal{O}_X)_{0} \otimes_{\mathcal{O}_{Y,0}}K_{0}).$$
For $i \gg0$ we have $y_i \in U$, and so $\delta_0=\delta_{y_i}$. Thus $\delta_0=\dim_{K_{0}}(w_{*}(\overline{\mathcal{O}_X}/\mathcal{O}_X)_{0} \otimes_{\mathcal{O}_{Y,0}}K_{0})$ which by the algebraic \cite[Thm.\ 4.1]{CL06} implies that $\overline{X}_y=\overline{X_y}$ for all $y$ close enough to $0$. Again, by \cite[Thm.\ 5.6]{CL06} this is equivalent to $y \mapsto \delta_y$ constant for all $y \in Y$ after possibly shrinking $Y$.
\end{proof}
Adopt the family setup with a section. Denote by $r_y:=|\nu_y^{-1}(0)|$ the \textit{number of branches at} $0 \in X_y$. Let $m_y:=m(X_y,0)$ be the multiplicity of $X_y$ at $0$.

\begin{theorem}\label{equimult} Suppose that $\Gamma_{h}^{k}(X)$ is empty, and suppose that the union of singular points of the fibers $X_y$ is $Y$. Then $\delta_y$, $r_y$ and $m_y$ are constant for all $y$ sufficiently close to $0$. In particular, $(X,0) \rightarrow (Y,0)$ admits strong simultaneous resolution.
\end{theorem}
\begin{proof} 
Because $\Gamma_{h}^{k}(X)$ is empty, Thm.\ \ref{main mpt} implies that for $y$ in a Zariski open set $U_{\Gamma}$ of Y we have
$$e(\mathrm{Jac}(X_0)) - \mathrm{cid}(X_0)- (e(\mathrm{Jac}(X_y))-\mathrm{cid}(X_y))=0.$$
By \cite[Thm.\ 3.9]{BR24}, we get 
$$\mu_0-\mu_y+m_0-m_y=0$$
for $y \in U_{\Gamma}$. By Thm.\ \ref{ZU of Milnor} $\mu_0 \geq \mu_y$ for $y$ in a Zariski open subset $U_\mu$ of $Y$. Thus, $m_0=m_y$ for $y \in U_{\Gamma} \cap U_{\mu}$. But $y \mapsto m_y$ is a Zariski upper semicontinuous function (follows from the upper semicontinuity of the Hilbert-Samuel function $y \mapsto e(I_y(y),\mathcal{O}_{X_y})$ where $I_Y$ is the ideal of $Y$ in $\mathcal{O}_X$). Therefore, $m_0=m_y$ for all $y$ close enough to $0$.

Below we present an alternative proof that does not use Thm.\ \ref{ZU of Milnor}. Consider
\begin{equation}\label{m-r identity}
2(\delta_0-\delta_y)+(m_0-m_y)+(r_y-r_0)=0.
\end{equation}

Given that set-theoretically $\mathrm{Sing}(X)_y = \mathrm{Sing}(X_y)$ and $\overline{X}_y$ is smooth for $y$ in a Zariski open subset $U_\delta$ of $Y$, it follows that $y \mapsto \delta_y$ is constant for $U_\delta$. By selecting a smooth curve in $U_{\delta} \cup \{0\}$ and applying the results of \cite[Sct.\ 3.3]{T77}, we obtain $\delta_0 \geq \delta_y$. Denote by $U_m$ the Zariski open subset of $Y$ over which $y \mapsto m_y$ is constant. To get the constancy of the three functions in (\ref{m-r identity}) for all $y$ close enough to $0$ it is enough to show that $r_y \geq r_0$ along a smooth curve in $U_{\delta} \cap U_{m}\cup\{0\}$.
So in what follows, we may assume that $\dim Y=1$.

Consider the normalization morphism $\nu:\overline{X}\to (X,0)$. Assume that for each  $x' \in \nu^{-1}(0)$ we have that $\overline{X}_0$ is smooth at $x'$ (this is equivalent to $\delta_0=\delta_y$ by \cite[Sct.\ 3.3]{T77}). Denote by $\nu^{-1}(Y)_{\mathrm{red}}$ the reduction of $\nu^{-1}(Y)$. The morphism $p:\nu^{-1}(Y)_{\mathrm{red}} \rightarrow Y$ is generically \'etale. We have $\#(p^{-1}(0))=r_0$ and $\#(p^{-1}(y))=r_y$ for $y$ generic. Some of the branches of $\nu^{-1}(Y)_{\mathrm{red}}$ might merge at $\nu^{-1}(0)$. Thus $r_0 \leq r_y$. Therefore, (\ref{m-r identity}) yields $r_0=r_y$, $m_0=m_y$ and $\delta_0=\delta_y$ establishing the first part of the theorem. 

Assume there exists $x' \in \nu^{-1}(0)$ such that $\overline{X}_0$ is singular at $x'$. We will show that $(\overline{X}_0,x')$ is an ordinary node, which yields $r_y \geq r_0$. Assume $(X,0) \subset (\mathbb{C}^{n+1},0)$. As usual embed $X_0$ in a small open $n$-ball $\mathring{B_0}:=\mathring{B}(0, \epsilon) \subset (\mathbb{C}^n,0)$. Replace $Y$ by an open disk centered at $0$. We may assume that $X$ is a closed analytic subset of $\mathring{B_0} \times Y$. Let $(x_1,\ldots,x_n,y_1)$ be coordinates for $(\mathbb{C}^{n+1},0)$. Let $F \colon (\mathbb{C}^{n+1},0) \rightarrow (\mathbb{C},0)$ be the linear functional $(x_1, \ldots, x_n, y_1) \mapsto (\sum_{i=1}^{n} \alpha_i x_i)$ where $\alpha_i$ are complex numbers subject to finitely many genericity conditions specified below. Set $f:=\sum_{i=1}^{n} \alpha_i x_i$. Recall that  $\Gamma_{h}^{1}(X)$ is $\overline{\Sigma_{X/Y}(F) \setminus \Sigma_{X/Y}}$ where $\Sigma_{X/Y}(F)$ is the union of the critical points of $F|X_y$ and $\Sigma_{X/Y}$ is the union of singular points of $X_y$. By assumption $\Gamma_{h}^{1}(X)$ is empty. So after $\mathring{B_0}$ is replaced by a smaller ball if necessary, we can assume that $\Sigma_{X/Y}(F)=\Sigma_{X/Y}$.



Consider the composition $h \circ \nu:\overline{X} \rightarrow (Y,0)$. Set $(X',x'):=(\overline{X},x')$. Set $\nu': (X',x') \rightarrow (X,0)$ and $h':=h \circ \nu'$.  Note that $X'\setminus\{x'\}$ is smooth because $X'$ is normal. Therefore, $h': X' \rightarrow Y$ is a smoothing of $X'_0$, i.e.\ there exists a Zariski open subset $U$ in $Y$ such that $X'_y$ is smooth for $y \in U$. 

Suppose $(X',x') \subset (\mathbb{C}^{N+1},x')=(\mathbb{C}^N,x') \times (Y,0)$ such that restriction of the projection $\mathbb{C}^{N+1} \rightarrow \mathbb{C}^{n+1}$ to $X'$ is the finite map $X' \rightarrow X$. Denote by $\tilde{F}: (\mathbb{C}^n,0) \rightarrow (\mathbb{C},0)$ the pullback of $F:(\mathbb{C}^n,0) \rightarrow (\mathbb{C},0)$. Because $F$ is general, $X_y \not \subset \mathrm{Ker}(F)$; so, $X'_y \not \subset \mathrm{Ker}(\tilde{F})$ for $y$ close to $0$. Assume $(X'_0,x')$ is embedded in a small open ball $\mathring{B}_{x'}:=\mathring{B}(x', \epsilon) \subset (\mathbb{C}^N,x')$. Replace $Y$ by a small enough open disk centered at $0$. We may assume that $(X',x')$ is a closed analytic subset of $\mathring{B}_{x'} \times Y$. Denote by $\Sigma_{X/Y}(\tilde{F})$ the union of critical points of $\tilde{F}|_{X_y}$. The fibers of $\Sigma_{X/Y}(\tilde{F}) \rightarrow (Y,0)$ are finite. Thus we can choose $\epsilon$ small enough so that $\mathring{B}_{x'} \cap \Sigma_{X'/Y}(\tilde{F})=\{x'\}$. In particular, for $y$ close enough to $0$, the functional $\tilde{F}_{|X'_y}$ does not have critical points on $\partial B_{x'}=S_{\epsilon}^{2N-1}$. Similarly to Prop.\ \ref{Euler} we have 
\begin{equation}\label{critical}
\#(\Sigma_{X'_y}(\tilde{F}))=\chi(\tilde{F}^{-1}(c) \cap X'_y)-\chi(X'_y) \ \text{for} \ y \in U \ \text{and} \ c \in (\mathbb{C},0) \ \text{generic},
\end{equation}
where $\chi(X'_y)$ is the Euler characteristic of the Euclidean closure of $X'_y$ in $B_{x'}$. By \cite[Sct.\ 3.3, pg.\ 608]{T77}) the germ $(X'_0,x')$ is reduced. By assumption $(X'_0,x')$ is singular. So by Prop.\ \ref{Euler}
$\chi(X'_y) \leq 0$. Assume that $\chi(X'_y) < 0$. By Bertini's theorem $\chi(\tilde{F}^{-1}(c) \cap X'_y)=\#(\tilde{F}^{-1}(c) \cap X'_y)$ for generic $c \in (\mathbb{C},0)$. Therefore, $\#(\Sigma_{X'_y}(\tilde{F})) > \#(\tilde{F}^{-1}(c) \cap X'_y)$ with $c$ close enough to $0$. As in the proof of Prop.\ \ref{Euler} $\tilde{F}^{-1}(c) \cap (X',x') \rightarrow (Y,0)$ is Cohen--Macaulay. Thus  $ \#(\tilde{F}^{-1}(c) \cap X'_y) \geq  \#(\tilde{F}^{-1}(0) \cap X'_y)$ for $c \neq 0$ close enough to $0$.  Denote by $\nu_{y}'$ the restriction of $\nu'$ to $h'^{-1}(y)$.  Therefore, by (\ref{critical}), we get
\begin{equation}\label{crit-ineq}
\#(\Sigma_{X'_y}(\tilde{F})) > \#(\nu_{y}'^{-1}(0)).
\end{equation}
This implies that there exists a $x'_y \in X'_y$ such that $x'_y \in \Sigma_{X'_y}(\tilde{F})$ and $x_y:=\nu_{y}'(x'_y) \in (X_y)_{\mathrm{sm}}$. Because $\nu_{y}'$ induces an isomorphism between $(X'_y,x'_y)$ and $(X_y,x_y)$ it follows that $x_y \in \Sigma_{X_y}(F)$, which contradicts our assumption that $\Gamma_{h}^{1}(X)$ is empty.

Therefore, we must have $\chi(X'_y) =0$. This yields $2-2g(X'_y)-r_{x'}=0$, where $r_{x'}$ is the number of boundary components of $X'_y$, which is equal to the number of branches of $(X'_0,x')$. Thus $r_{x'}=2$. Set $f':=(\nu')^{*}(f)$. Denote by $f'_0$ the image of $f'$ in $\mathcal{O}_{X'_0,x'}$ and by $f'_y$ the image of $f'$ in $\mathcal{O}_{X'_y}$. Because $\mathcal{O}_{X',x'}$ is Cohen--Macaulay we have $\dim_{\mathbb{C}}\mathcal{O}_{X'_0,x'}/(f'_0)=\dim_{\mathbb{C}}\mathcal{O}_{X'_y}/(f'_y)$. Set $T':=\mathbb{V}(f')$. Set-theoretically, $T'=\tilde{F}^{-1}(0) \cap X'$. Because $r_{x'}=2$ we have $\dim_{\mathbb{C}}\mathcal{O}_{X'_0,x'}/(f'_0) \geq 2$. Thus $\mathcal{O}_{T'_y}$ is an Artinian ring of length at least $2$.
Suppose $\tilde{F}^{-1}(0) \cap X'_y$ consists of only one point. Because $\dim_{\mathbb{C}} \mathcal{O}_{T'_y} \geq 2$ by Bertini's theorem $\#(\tilde{F}^{-1}(c) \cap X'_y) \geq 2$. Thus there exists a $x'_y \in X'_y$ such that $x'_y \in \Sigma_{X'_y}(\tilde{F})$ and $x_y:=\nu_{y}'(x'_y) \in (X_y)_{\mathrm{sm}}$, which implies that $\Gamma_{h}^1(X)$ is nonempty. Suppose $\tilde{F}^{-1}(0) \cap X'_y$ consists of at least two points. If there exists $x'_y \in X'_y$ such that $x'_y \in \tilde{F}^{-1}(0) \cap X'_y$ and $x_y:=\nu_{y}'(x'_y) \in (X_y)_{\mathrm{sm}}$, then as above we establish that (\ref{crit-ineq}) holds. Thus, we can assume that $\nu^{-1}(Y)$ has at least two irreducible components that intersect at $x'$. This shows that $r_y \geq r_0$, which implies $\delta_0=\delta_y$, $m_0=m_y$ and $r_0=r_y$ for $y$ close enough to $0$.

We are left with proving the second part of the theorem. Because $\delta_0=\delta_y$ we have that $\overline{X}_y$ is smooth for all $y$ close enough to $0$.
Set $\nu_{y}^{-1}(y):=\{\bar{x}_y(1), \ldots, \bar{x}_{y}(r)\}$ for $y$ close enough to $0$. We have that $\nu^{-1}(Y)$ has $r$ irreducible components $Y_1, \ldots, Y_r$ such that each $Y_i$ passes through exactly one $\bar{x}_0(i)$ and $\bar{x}_y(i)$. Denote by $q_i$ the valuation of $\nu_{0}^{*}(f_0)$ viewed as an element in the discrete valuation ring $\mathcal{O}_{\overline{X_0},\bar{x}_0(i)}$. For $y \neq 0$ generic, denote by $l_i$ the order of  $\nu_{y}^{*}(f_y)$ viewed as an element in the discrete valuation ring $\mathcal{O}_{\overline{X_y},\bar{x}_y(i)}$. We have $\sum_{i=1}^{r}q_i=m_0$ and  $\sum_{i=1}^{r}l_i=m_y$. But by upper semi-continuity $q_i \geq l_i$ and by what we have already established above $m_0=m_y$. Thus $q_i=l_i$. 

Because $q_i=l_i$ we have $\dim_{\mathbb{C}} \mathcal{O}_{Y_i,\bar{x}_0(i)}=\dim_{\mathbb{C}} \mathcal{O}_{Y_i,\bar{x}_y(i)}=q_i=l_i$ for every $y$. Thus, by \cite[Cor.\ 18.17]{Eis95} $Y_i$ is Cohen--Macaulay. But $\codim(\nu^{-1}(Y),\overline{X})=1$ and $\overline{X}$ is smooth, so by \cite[\href{https://stacks.math.columbia.edu/tag/0BXH}{Tag 0BXH}]{Stacks}, we obtain that
 $\nu^{-1}(Y)$ is a Cartier divisor. 
Let $(u,y_1, \ldots, y_k)$ be local coordinates on $(\overline{X},\bar{x}_0(i))$. Denote by $\mathfrak{m}_y$ the maximal ideal of $\mathcal{O}_{X_y,0}$ and by $u$ the uniformazing parameter of $\mathcal{O}_{\overline{X_y},\bar{x}_y(i)}$.   Note that the ideal of $(Y_i)_y$ in $\mathcal{O}_{\overline{X_y},\bar{x}_y(i)}$ is $\nu_{y}^{*}(\mathfrak{m}_y)=(\nu_{y}^{*}(f_y))=(u^{l_i}).$  Thus locally at $\bar{x}_0(i)$ the ideal of $Y_i$ in $\mathcal{O}_{\overline{X},\bar{x}_0(i)}$ is generated by $u^{l_i}g(u,y_1, \ldots, y_k)$ where $g(u,y_1, \ldots, y_k)$ does not vanish in a small enough neighborhood of $\bar{x}_0(i)$ in $\overline{X}$. Therefore, $(Y_i,\bar{x}_0(i)) \cong ((Y_{i})_0, \bar{x}_0(i)) \times (Y,0)$. 
\end{proof}
\textbf{Remark.} Unlike the case of families of plane curves, the assumption that $\Gamma_{h}^{k}(X)$ is empty does not imply non-splitting of the singular locus of $(X,0) \rightarrow (Y,0)$ as the following example demonstrates (\cite[Ex.\ 7.2.5]{BG80}). Consider the one-parameter family with parametrization $\psi: (\mathbb{C},0)\times (\mathbb{C},0) \rightarrow (\mathbb{C}^3,0) \times (\mathbb{C},0)$ given by $\psi(s,y):=(s^3-3ys, s^4-2ys^2,s^5-7ys^3+16y^2s)$. The generic fiber $X_y$ for $y \neq 0$ has two singular points $s = \pm \sqrt{y}$ each isomorphic to the cusp $\mathbb{C}\{t^2,t^3\}$. One computes 
$$2\delta_0+m_0-r_0 = 2\delta_y+m_y-r_y=6$$
where $\delta_y$ is the sum of the delta invariants at the two singular points of $X_y$, $m_y$ is the sum of their multiplicities, and $r_y$ is the sum of the number of branches at the singular points of $X_y$. Thus by Thm.\ \ref{main mpt}, we get $\Gamma_{h}^{k}(X)=\emptyset$. Moreover, the Milnor numbers of $X_0$ and  $X_y$ ($y \neq 0$) are $4$. So, the singular locus may split even if the Milnor number remains constant and the  relative polar curve is empty.

Denote by $m(X,y)$ the multiplicity of $X$ at $y$. We record a result we will need later due to Hironaka and Schikoff, whose proof can be found in \cite[Scts.\ 4 and 5]{Lip}. 
\begin{theorem}[Hironaka-Schikoff]\label{HSch}
 The function $y \mapsto m_y$ is constant across $(Y,0)$ if and only if $y \mapsto m(X,y)$ is constant across $(Y,0)$.
\end{theorem}
\subsection{Topological equisingularity}\label{top. equising.}
Suppose $h \colon (X,0) \rightarrow (Y,0)$ is a flat family such that 
$h$ factors through the composition of maps 
\begin{tikzcd}[column sep=small]
(X,0) \arrow[hookrightarrow]{r} & (\mathbb{C}^n,0) \times (Y,0) \ar[r,"pr_2"] & (Y,0)
\end{tikzcd}
with $(h^{-1}(0),0)=(X_0,0)$. Assume  $Y$ is smooth. Assume $(X_0,0)$ is contained in an open ball $\mathring{B_0}=\mathring{B}_0(0,\epsilon)\subset(\C^n,0)$. Identify $Y$ with an open $k$-ball. We may assume that $X$ is an analytic subset of $B_0 \times Y$. Assume that $B_0$ and $Y$ are sufficiently small (see conditions \rm{(iv)} and \rm{(v)} in the proof of Thm.\ \ref{ZU of Milnor}). 

Recall that $(X,0) \rightarrow (Y,0)$ is {\it topologically equisingular} if there exists a homeomorphism $\phi: (X,0) \rightarrow (X_0,0) \times (Y,0)$ such that $h=pr_2 \circ \phi$. 
In this subsection, we will show that topological equisingularity is characterized by the constancy of the Milnor number, generalizing a result of Buchweitz and Greuel (see the equivalences of (1)-(4) with (6) in \cite[Thm.\ 5.2.2]{BG80}). 

Our treatment allows for a more general setup than the one in \cite[Thm.\ 5.2.2]{BG80}: we do not need to assume the existence of a section from $Y$ to $X$ that parametrizes the singular points of the fibers $X_y$. The assumption on the 
intersection of the irreducible components of the fibers $X_y$ is necessary to have a homeomorphism between $X_y$ and the germ $(X_0,0)$. From the analysis in Prop.\ \ref{nonneg} \rm{(i)} we see that the assumption $y \mapsto \mu_y$, which yields $1=\chi(X_y)$, does not alone impose the structure of the intersection of the irreducible components needed for the existence of a topological trivialization. However, the intersection points $x_y$ form an irreducible component $S_c$ of the relative singular locus of $X \rightarrow Y$ such that $(S_c,0)$, identified with its reduced structure, is analytically isomorphic to $(Y,0)$. Indeed, the map $(S_c,0) \rightarrow (Y,0)$ is finite and bimeromorphic. But $(Y,0)$ is normal, so $(S_c,0) \cong (Y,0)$. Thus our hypotheses allow for the existence of analytically irreducible singularities of the fibers $X_y$ away from $x_y$. We begin with a lemma. 

\begin{lm}\label{comps. ineq.}
Let $l:(Z,0) \rightarrow (T,0)$ be a flat family of reduced curves with $(T,0)$ smooth. Assume that $T$ is a curve or $\overline{Z}_t=\overline{Z_t}$ for all $t \in (T,0)$. Denote the number of irreducible components of $Z$ and $Z_t$ by $R(Z)$ and $R(Z_t)$, respectively. Then $R(Z)=R(Z_t)$ and $R(Z_0) \geq R(Z_t)$ for $t \neq 0$ generic.
\end{lm}
\begin{proof}
Let $Z_1, \ldots, Z_{R(Z)}$ be the irreducible components of $Z$. Denote by $n:\overline{Z} \rightarrow Z$ the normalization morphism. Then $\overline{Z} =\bigsqcup_{j=1}^{R(Z)}\overline{Z_j}$ and so $\mathcal{O}_{\overline{Z}}=\bigoplus_{j=1}^{R(Z)}\mathcal{O}_{\overline{Z_j}}$.

Suppose $T$ is a smooth curve. We have $\mathcal{O}_{\overline{Z}_0}=\bigoplus_{j=1}^{R(Z)}\mathcal{O}_{(\overline{Z_j})_0}$. By \cite[Sct.\ 3.3, pg.\ 608]{T77} $\mathcal{O}_{\overline{Z}_0}$ is a subring of $\overline{\mathcal{O}_{Z_0}}$, which is reduced. Then $(\overline{Z_j})_0$ is reduced and $\overline{Z_j} \rightarrow T$  is flat for each $j$.

Suppose  $\overline{Z}_t=\overline{Z_t}$ for all $t \in (T,0)$ (we also say that $l$ is equinormalizable). View $\mathcal{O}_{\overline{Z}}/\mathcal{O}_Z$ as a finite $\mathcal{O}_{T,0}$-module. By \cite[Thm.\ 5.6]{CL06} we have $$\delta_0=\dim_{\mathbb{C}}(\mathcal{O}_{\overline{Z}_{0}}/\mathcal{O}_{Z_0})=\dim_{\mathbb{C}}(\mathcal{O}_{\overline{Z}_{t}}/\mathcal{O}_{Z_t})=\delta_t$$
because $l$ is equinormalizable. In particular, $\mathcal{O}_{\overline{Z}}/\mathcal{O}_Z$ is a locally free $\mathcal{O}_{T,0}$-module.
Let $\mathfrak{n}$ be the maximal ideal of $\mathcal{O}_{T,0}$ and let $\mathfrak{m}$ be the maximal ideal of $\mathcal{O}_{Z,0}$. Identify $\mathfrak{n}$ with its image $l^{\#}(\mathfrak{n})$ in $\mathfrak{m}$. Consider the exact sequence
of local cohomology of finitely generated $\mathcal{O}_Z$-modules
$$\mathrm{H}_{\mathfrak{n}}^{i}(\mathcal{O}_{Z}) \rightarrow \mathrm{H}_{\mathfrak{n}}^{i}(\mathcal{O}_{\overline{Z}})\rightarrow \mathrm{H}_{\mathfrak{n}}^{i}(\mathcal{O}_{\overline{Z}}/\mathcal{O}_Z).$$
The two extreme terms vanish for $i \leq \dim T-1$. Thus $\mathrm{depth}_{\mathfrak{n}}(\mathcal{O}_{\overline{Z}}) \geq \dim T$. Because $l$ is equinormalizable, $\mathcal{O}_{\overline{Z}}/\mathfrak{n}\mathcal{O}_{\overline{Z}}=\mathcal{O}_{\overline{Z}_0}=\mathcal{O}_{\overline{Z_{0}}}$. In particular, $\mathcal{O}_{\overline{Z}_0}$ is reduced. Hence   $\mathrm{depth}_{\mathfrak{m}}(\mathcal{O}_{\overline{Z}}) = \dim T+1$ and thus $\overline{Z}$ is Cohen--Macaulay. Therefore, the morphism is $\overline{Z} \rightarrow T$ is flat and so are the morphisms $\overline{Z_j} \rightarrow T$. 

In both cases we obtained that $(\overline{Z_j})_0$ is reduced and $\overline{Z_j} \rightarrow T$  is flat for each $j$. Therefore, for $t\neq 0$ the fiber $(\overline{Z_j})_t$ is connected (see connectedness in the proof of Thm.\ \ref{ZU of Milnor}) and normal because $\overline{Z}_t = \overline{Z_t}$. Thus $(\overline{Z_j})_t$ is irreducible for $t \neq 0$. So, $\overline{Z}_t = \bigsqcup_{j=1}^{R(Z)}(\overline{Z_j})_t$ is a decomposition of $\overline{Z}_t$ into irreducible components. Because the normalization morphism $\overline{Z_t} \rightarrow Z_t$ induces a bijection between the irreducible components of the source and the target, we conclude that $R(Z)=R(Z_t)$. Furthermore, under our hypothesis $\overline{Z}_0 \rightarrow Z_0$ is a finite birational map with $\overline{Z}_0 = \bigsqcup_{j=1}^{R(Z)}(\overline{Z_j})_0$. Therefore, $R(Z_0) \geq R(Z)$.
\end{proof}

\vspace{.2cm}
\begin{center}
{\textbf{Proof of Theorem \ref{top. equis.}}}
\end{center}
\vspace{.2cm}

Suppose $y \mapsto \mu_y$ is constant. By Cor.\ \ref{BG ineq} $\delta_0 = \delta_y$ for $y$ in a Zariski open subset of $Y$. Therefore, by Prop.\ \ref{delta trivial} $\delta_0=\delta_y$ for each $y \in Y$ after possibly shrinking $Y$.

By assumption the irreducible components of each $X_y$ meet at a single point of intersection which we will denote by $x_y$. As usual, denote by $r(X_y,x)$ the number of branches of the germ $(X_y,x)$. 
By Prop.\ \ref{nonneg} \rm{(i)} for $y \in U_\chi$, each irreducible component of $X_y$ is analytically irreducible.  So, the number of irreducible components $R(X_y)$ of $X_y$ is equal to $r(X_y,x_y)$. Because $\mu_0=\mu_y$ and $\delta_0 = \delta_y$ we have $r(X_0,0)=r(X_y,x_y)$ for $y \in U_\chi$.
We will show that for  all $y \in Y$ each irreducible component of $X_y$ is analytically irreducible, $R(X_y)=r(X_y,x_y)$ and furthermore, $y \mapsto R(X_y)$ is constant. 

We proceed as in the proofs of Prop.\ \ref{Zariski-ups-cid} and Thm.\ \ref{ZU of Milnor}.
Fix $y' \in Y$. Because $Y$ is smooth, we can find a smooth curve $Y'$ passing through $y'$ such that $Y' \setminus\{y'\}$ lies in $U_{\chi}$. Set $h':X':=X \times_{Y}Y' \rightarrow Y'$. Let $x_1, \ldots, x_w$ be the singular points of $X'_{y'}=X_{y'}$. Assume each $(X'_{y'},x_i)$ is contained in a sufficiently small open ball $\mathring{B_i}$ in $\mathbb{C}^n$ centered at $x_i$ such that $B_i \cap B_j = \emptyset$ for each $i \neq j$ with $i,j \in \{0, \ldots, w\}$. Let $X_i' \rightarrow Y'$ with $X_{i}' \subset \mathring{B_i} \times (Y',y')$ be the induced embedded deformation of $(X'_{y'},x_i)$ from $X'\rightarrow Y'$. As in the proof of Thm.\ \ref{ZU of Milnor}, shrink $(Y',y')$ so that $((X'_i)_y)_{\mathrm{sing}} \subset \bigcup_{i=1}^w \mathring{B_i} \times \{y\}$. Let $y \in Y'$ with $y' \neq y$ be sufficiently small with respect to all families $X_i' \rightarrow Y'$ so that (\ref{Gen-BG}) is valid for each one of them. By upper semicontinuity and our assumption that $y \mapsto \mu_y$ is constant, for $i=1, \ldots, w$ we have 
$$
\mu(X_{y'},x_i) \geq \mu((X_i')_y) \ \text{and} \ \mu_{y'}=\sum_{i=1}^w\mu(X_{y'},x_i)=\mu_y=\sum_{i=1}^w\mu((X_i')_y).
$$
Therefore, $\mu(X_{y'},x_i)= \mu((X_i')_y)$.
Similarly, because $\delta(X_{y'})=\delta(X_y)$, and $\delta(X_{y'},x_i) \geq \delta((X_i')_y)$, we have $\delta(X_{y'},x_i)= \delta((X_i')_y)$. Thus, for each $i=1, \ldots, w$ we have $$r(X_{y'},x_i)-1=\sum_{x \in ((X_i')_y)_{\mathrm{sing}}}(r((X_i')_y,x)-1).$$
If $x_i \neq x_{y'}$ where $x_{y'}$ is the intersection point of the irreducible components of $X_{y'}$, then $r((X_i')_y,x)=1$ for each $x \in ((X_i')_y)_{\mathrm{sing}}$. This is because the points of intersection of the irreducible components of the fibers of $X \rightarrow Y$ form an irreducible component of the relative singular locus $S_h$ which is in bijection with $Y$. If $x_i=x_{y'}$, we obtain from the above identity that $r(X_{y'},x_{y'})=r(X_y,x_y)$. By Lem.\ \ref{comps. ineq.} $r(X_{y},x_{y})=R(X_y)$ is equal to  the number of irreducible components  $R(X')$ of $X'$.  Thus $R(X')=r(X_{y'},x_{y'})$. But $r(X_{y'},x_{y'}) \geq R(X'_{y'})$ with an equality if and only if each irreducible component of $X_{y'}$ is analytically irreducible. By Lem.\ \ref{comps. ineq.} $R(X'_{y'}) \geq R(X')$. So, $R(X'_{y'})=R(X_{y'})=r(X_{y'},x_{y'})$. Again, by Lem.\ \ref{comps. ineq.} we have $R(X_y)=R(X)$ because $\overline{X}_y=\overline{X_y}$ for all $y \in (Y,0)$. Therefore, $r(X_y,x_y)=R(X_y)=R(X)$ for all $y \in (Y,0)$ and the irreducible components of each $X_y$ are analytically irreducible.


Set $r:=R(X)$ and $\nu_{0}^{-1}(0):=\{\bar{x}_0(1), \ldots, \bar{x}_{0}(r)\}$ Denote by $S_c$ the irreducible component of the relative singular locus of $X \rightarrow Y$ formed by the intersection points $x_y$. Let $(\nu^{-1}(S_c))_{\mathrm{red}}=\bigsqcup_{j=1}^{r}\tilde{S_j}$ be the decomposition of the reduced $\nu^{-1}(S_c)$ into irreducible components. Because $\overline{X}$ is smooth and $\codim((\nu^{-1}(S_c))_{\mathrm{red}}, \overline{X})=1$, we get, as in the proof of Thm.\ \ref{equimult}, that $(\nu^{-1}(S_c))_{\mathrm{red}}$ is a Cartier divisor in $\overline{X}$. In particular, each $\tilde{S_j}$ is Cohen--Macaulay, and so the finite map $(\tilde{S_j}, \bar{x}_0(j))\rightarrow (Y,0)$ is flat. Its general fiber is a reduced point. Therefore, by Nakayama's lemma $\mathcal{O}_{\tilde{S_j}, \bar{x}_0(j)}$ is isomorphic to $\mathcal{O}_{Y,0}$. Thus $(\tilde{S_j},\bar{x}_0(j))$ is smooth for $j=1, \ldots, r$. 

Next we proceed as in the proof of  \cite[Thm.\ 5.2.2]{BG80}. Let $X_1, \ldots, X_{r}$ be the irreducible components of $X$.
Denote by $n:\overline{X} \rightarrow X$ the normalization morphism. Then $\overline{X} =\bigsqcup_{j=1}^{r}\overline{X_j}$.
Because $\delta_0=\delta_y$ by \cite[Thm.\ 5.6]{CL06}) $\overline{X}$ is smooth and so are $\overline{X_j}$. Furthermore,
$\overline{(X_j)}_y=\overline{(X_j)_y}$ for all $y \in Y$. Moreover, $\overline{X_j} \rightarrow X_j$ is a homeomorphism for each $j$ because $X_j$ is locally analytically irreducible and so $\overline{(X_j)_y} \rightarrow (X_j)_y$ is a homeomorphism for all $y$.

Because $\partial B_0 \times \{y\}$ intersects $X_y$ transversally, the map from the closure in the Euclidean topology of $\overline{X_j}$ to $Y$ is a submersion. By Ehresmann's fibration theorem 
we can take a lift of the coordinate vector fields on $(Y,0)$ such that the lifts are tangent to $(\tilde{S_j},0)$.  These lifts determine a local flow, which yields a diffeomorphism
$\overline{X_j} \cong \overline{(X_j)_0} \times Y$ such that the point $(\tilde{S_j})_y$ is sent to the point $((\tilde{S_j})_0,y)$.
The composition of maps yields a trivializing homeomorphism $\phi_j: X_j \rightarrow (X_j)_0 \times Y$ such that $x_y$ is sent to the point $(x_0,y)$. These homeomorphisms agree on the intersection of the components $X_j$, which gives rise to a homeomorphism $\phi: X \rightarrow X_0 \times Y$ such that $h=pr_2 \circ \phi$.

To prove the converse, observe that if $X \rightarrow Y$ is topologically equisingular, then each $X_y$ is homeomorphic to $X_0$, which is contractible. Thus $\chi(X_y)=1$ for each $y$. Therefore, by Thm.\ \ref{ZU of Milnor} $\mu_0=\mu_y$ for $y \in U_{\chi}$. But $y \mapsto \mu_y$ is Zariski upper semicontinuous. Thus $\mu_0 \geq \mu_{y'} \geq \mu_y$ for each $y' \in Y$ and $y \in U_{\chi}$. Therefore, $y \mapsto \mu_y$ is constant across $Y$.
\hfill\qedsymbol 
\vspace{.1cm}

\textbf{Remark.} Thm.\ \ref{top. equis.} applied to the example considered in Sct.\ \ref{simultaneous} of a family of branches with a constant Milnor number shows that the family is topologically equisingular despite the fact that the relative singular locus is reducible. 


\subsection{Whitney equisingularity} 
Preserve the setup from Sct.\ \ref{simultaneous}.
Define the {\it conormal space} $C(X)$ of $X$ (sometimes referred as the absolute conormal space) to be the closure in $X \times \check{\mathbb{P}}^{n-1}$ of the set of pairs $(x,H)$ where $x$ is a smooth point of the fiber $X$ and $H$ is a (tangent) hyperplane in $\mathbb{C}^n$ at $x$ containing the tangent space $T_{x}X$. Note that $C(X) = \mathrm{Projan}(\mathcal{R}(J(X)))$ where $\mathcal{R}(J(X))$ is the Rees algebra of the Jacobian module $J(X)$.

Similarly to the way we defined the relative polar varieties in Sct.\ \ref{polar} in the absolute setting, we define the absolute polar varieties $\Gamma^{l}(X)$ of dimension $l$.
Note that $\Gamma^{l}(X)=X$ for $l= \dim X$.
Denote by $m(\Gamma^{k}(X),y)$ the multiplicity of $\Gamma^{k}(X)$ at $y$. Denote by $c: C(X) \rightarrow X$ the structure morphism and by $D_Y$ the exceptional divisor of $\mathrm{Bl}_{c^{-1}Y}C(X)$. Teissier \cite{Teissier} described the compatibility condition of limits of secants and tangent hyperplanes known as Whitney condition B geometrically as an equidimensionality condition of the fibers of $D_Y \rightarrow Y$. In turn, he showed that this can be controlled by the multiplicities of the polar varieties. Concretely,  Teissier \cite{Teissier} showed that the pair $(X_{\mathrm{sm}},Y)$
satisfies the Whitney conditions at $0$ if and only if $y \mapsto  m(\Gamma^{l}(X),y)$ is constant in a neighborhood of $0$ for $l=k, \ldots, \dim X$, where $k=\dim Y$. As the polar variety $\Gamma^k(X)$ meets $Y$ only at $0$,  $y \mapsto  m(\Gamma^{l}(X),y)$ constant in neighborhood of $0 \in Y$ is equivalent to $\Gamma^k(X) = \emptyset$. Thus,  Teissier's result specialized to families of curves reads as follows. 

\begin{theorem}[Teissier]\label{absolute} The pair $(X_{\mathrm{sm}},Y)$ satisfies the Whitney conditions at $0$ if and only if $\Gamma^k(X) = \emptyset$ and $y \mapsto m(X,y)$ is constant for all $y$ close to $0$. 
\end{theorem}

Adopt the setup of Sct.\ \ref{Conormal spaces}. Let $X$ be defined by the vanishing of some analytic functions $f_1, \ldots, f_p$ on a Euclidean neighborhood of $0$ in $\mathbb{C}^{n+k}$. Let $(y_1, \ldots, y_k)$ be coordinates on $(Y,0)$. For each $j=1, \ldots, k$ let $g_j$ be the column vector
\begin{equation}\label{section}
g_j :=
\begin{pmatrix}
\partial f_{1} / \partial y_{j}\\
\vdots \\
\partial f_{p} / \partial y_{j}\\
\end{pmatrix}.
\end{equation}
Denote the ideal of $Y$ in $\mathcal{O}_{X,0}$ by $m_{Y}$ and denote by $J_h(X) \subset \mathcal{O}_{X,0}^p$ the relative Jacobian module. View each $g_j$ as an element in $\mathcal{O}_{X,0}^p$. The following result \cite[Thm.\ 2.5]{Gaf1} characterizes
the Whitney conditions by the integral dependence of the $g_j$ on the module $m_{Y}J_{h}(X)$. 

\begin{theorem}[Gaffney]\label{Wht. int. dep.}
 The pair $(X_{\mathrm{sm}},Y)$ satisfies the Whitney conditions at $0$ if and only if $g_j$ is integrally dependent on $m_Y J_{h}(X) $ for each $j=1, \ldots, k$.
\end{theorem}

The two results above, due to Teissier and Gaffney, give an important connection between the absolute and relative conormal spaces in the presence of Whitney equisingularity. If $(X_{\mathrm{sm}},Y)$ satisfies the Whitney conditions at $0$ there is a finite map $C(X) \rightarrow C_{h}(X)$ because by Thm.\ \ref{Wht. int. dep.} $\mathcal{R}(J(X))$ is integral over $\mathcal{R}(J_h(X))$. In particular, by Thm.\ \ref{absolute} $\Gamma^{k}(X) = \emptyset$ implies that $\Gamma^{k}_h(X) = \emptyset$. We prove below that conversely $\Gamma^{k}_h(X) = \emptyset$ implies that $\Gamma^{k}(X) = \emptyset$.

\vspace{.2cm}
\begin{center}
{\textbf{Proof of Theorem \ref{Whitney}}}
\end{center}
\vspace{.2cm}

$\rm{(i)} \Leftrightarrow \rm{(ii)}$. Assume that $y \mapsto e(\mathrm{Jac}(X_y,0)) - \mathrm{cid}(X_y,0)$ is constant across $(Y,0)$. By Thm.\ \ref{main mpt} $$e(\mathrm{Jac}(X_0)) - \mathrm{cid}(X_0) \geq  e(\mathrm{Jac}(X_y))-\mathrm{cid}(X_y).$$ 
Thus $$\sum_{x \in (X_y)_{\mathrm{sing}}, x \neq 0}e(\mathrm{Jac}(X_y,x))-\mathrm{cid}(X_y,x) \leq 0.$$
But by \cite[Thm.\ 3.9]{BR24} $e(\mathrm{Jac}(X_y,x))-\mathrm{cid}(X_y,x) \geq 0$ for each $x \in X_y$  with equality if and only if  $(X_y,x)$ is smooth. Thus for each $y \in Y$ the only singular point of $X_y$ is $0$. Now assume that this is the case. By Thm.\ \ref{main mpt}  $y \mapsto e(\mathrm{Jac}(X_y,0)) - \mathrm{cid}(X_y,0)$ constant is equivalent to $\Gamma_{h}^k(X)=\emptyset$ which in turn is equivalent to asking that $c_{h}^{-1}(Y) \rightarrow Y$ has equidimensional fibers. Also, the constancy of $y \mapsto e(\mathrm{Jac}(X_y,0)) - \mathrm{cid}(X_y,0)$ is equivalent to $b_{h}^{-1}(Y) \rightarrow Y$ having equidimensional fibers by Prop.\ \ref{vertical}. 

$\rm{(ii)} \Leftrightarrow \rm{(iii)}$. Assume that for each $y \in Y$ the only singular point of $X_y$ is $0$ and that $\Gamma_{h}^k(X)$ is empty. By Thm.\ \ref{equimult} $y \mapsto m_y$ is constant which is equivalent to $y \mapsto m(X,y)$ constant by Thm.\ \ref{HSch}. Thus by Thm.\ \ref{absolute} it is enough to show that $\Gamma^k(X) = \emptyset$. Adopt the setup of Scts.\ \ref{Conormal spaces} and \ref{polar}. Consider the pair the modules $J_{h}(X) \subset J(X)\subset \mathcal{O}_{X}^p$ and the corresponding integral closures $\overline{J_{h}(X)} \subset \overline{J(X)}$ of  $J_{h}(X)$ and  $J(X)$ in $\mathcal{O}_{X}^p$. Set $V:=\mathrm{Supp}_{X}(\overline{J(X)}/\overline{J_{h}(X)})$. Because the union of the singular points of $X_y$ is $Y$ we have $V \subset Y$. By a result of Hironaka \cite{Hir76} $(X-Y,Y)$ satisfies the Whitney conditions at all points $y$ in a Zariski open subset of $Y$. Thus $V$ is a proper closed subset of $Y$.
Because $\Gamma_{h}^k(X)= \emptyset$, we have $\dim c_h^{-1}(0)=n-2.$ Thus
$$\dim c_h^{-1}(V) \leq \dim c_h^{-1}(0)+\dim V \leq n-2 + \dim Y-1=n+k-3 =\dim C_h(X)-2.$$
Therefore, by \cite[Cor.\ 10.7]{KT-Al} $V$ is empty. So $J_h(X)$ and $J(X)$ have the same integral closure which yields a finite morphism $C(X) \rightarrow C_h(X)$. Thus $c^{-1}(0)=n-2$ where $c: C(X) \rightarrow X$ is the structure morphism. This implies that $\Gamma^k(X) = \emptyset$. The converse follows from the explanation provided after Thm.\ \ref{Wht. int. dep.}.

$\rm{(ii)} \Rightarrow \rm{(iv)}$. This is the content of Thm.\ \ref{equimult}.

$\rm{(iv)} \Rightarrow \rm{(i)}$. Because $\overline{X}_y=\overline{X_y}$ for each $y \in Y$ we have $y \mapsto\delta_y$ is constant by a result of Teissier, Raynaud, Chiang-Hsieh and Lipman (\cite[Sct.\ 3.3]{T77} and \cite[Thm.\ 5.6]{CL06}). The constancy of $y \mapsto r_y$ follows from the topological triviality of  $\nu^{-1}(Y) \rightarrow Y$ where $\nu: \overline{X} \rightarrow X$ is the normalization. Thus 
by  \cite[Thm.\ 3.9]{BR24} it remains to show that $y \mapsto m_y$ is constant.
Denote the irreducible components of $\nu^{-1}(Y)$ by $Y_1, \ldots, Y_r$. For each $y \in Y$ set $\nu_{y}^{-1}(y)=\{\bar{x}_y(1), \ldots, \bar{x}_{y}(r)\}$.  Let $F \colon (\mathbb{C}^{n+k},0) \rightarrow (\mathbb{C},0)$ be the linear functional $(x_1, \ldots, x_n, y_1, \ldots, y_k) \mapsto (\sum_{i=1}^{n} \alpha_i x_i)$ where $\alpha_i$ are generic complex numbers. Set $f:=\sum_{i=1}^{n} \alpha_i x_i$. Denote by $f_y$ the image of $f$ in $\mathcal{O}_{X_y,0}$. One can choose the $\alpha_i$s generic enough so that the ideal generated by $f_y$ is a reduction of the maximal ideal $\mathfrak{m}_y$ of $\mathcal{O}_{X_y,0}$ for each $y$ in a small enough neighborhood of $0$. Denote by $q_i$ the valuation of $\nu_{0}^{*}(f_0)$ viewed as an element in the discrete valuation ring $\mathcal{O}_{\overline{X_0},\bar{x}_0(i)}$. For $y \neq 0$ generic, denote by $l_i$ the order of  $\nu_{y}^{*}(f_y)$ viewed as an element in the discrete valuation ring $\mathcal{O}_{\overline{X_y},\bar{x}_y(i)}$. We have $m_0=\sum_{i=1}^{r}q_i$ and $m_y=\sum_{i=1}^{r}l_i$. To prove $m_0=m_y$ it is enough to show that $q_i=l_i$ for each $i=1, \ldots, r$. Note that $\mathcal{O}_{Y_i,\bar{x}_y(i)}=\mathcal{O}_{\overline{X_y},\bar{x}_y(i)}/(\nu_{y}^{*}(f_y))$ for all $y \in Y$. By assumption, $(Y_i,\bar{x}_0(i)) \cong ((Y_{i})_0,\bar{x}_0(i)) \times (Y,0)$. Thus $\mathcal{O}_{Y_i,\bar{x}_0(i)} \cong \mathcal{O}_{Y_i,\bar{x}_y(i)}$ for $y \neq 0$ which yields $q_i=l_i$.

\begin{proposition}\label{jacobian mult.}
If $y \mapsto e(\mathrm{Jac}(X_y,0))$ is constant across $(Y,0)$,  then for each $y \in Y$ the only singular point of $X_y$ is $0$ and the family $h: (X,0) \rightarrow (Y,0)$ admits strong simultaneous resolution. 
\end{proposition}
\begin{proof} By upper semi-continuity and additivity of the Hilbert--Samuel function we have $$e(\mathrm{Jac}(X_0,0)) \geq e(\mathrm{Jac}(X_y)):=\sum_{x \in (X_y)_{\mathrm{sing}}}e(\mathrm{Jac}(X_y,x)).$$

Thus $e(\mathrm{Jac}(X_y,x))=0$ for each $x \in X_y$ with $x \neq 0$ showing that $X_y$ is smooth away from $0 \in X_y$. This shows that for each $y \in Y$ the only singular point of $X_y$ is $0$.

By Prop.\ \ref{constancy} we have $\mathrm{cid}(X_0) \geq \mathrm{cid}(X_y)$. Because $e(\mathrm{Jac}(X_0,0))=e(\mathrm{Jac}(X_y,0))$, by Thm.\ \ref{main mpt} we have $\Gamma^{k}_h(X) = \emptyset$. Thus by Thm.\ \ref{Whitney} we conclude that  $h: (X,0) \rightarrow (Y,0)$ admits strong simultaneous resolution.
\end{proof}
For one-parameter families of curves the fact that the constancy of $y \mapsto e(\mathrm{Jac}(X_y,0))$ implies $\Gamma_{h}^k(X)= \emptyset$ was discovered by Kleiman, Ulrich and Validashti in connection with their work on the epsilon multiplicity of a module. The converse to Prop.\ \ref{jacobian mult.} is not true in general as demonstrated by the following example taken from \cite[Sct.\ 7]{BG80}.

\begin{example}
\rm{Consider the one-parameter family of irreducible curves $(X,0) \rightarrow (D,0)$ given parametrically by  $\mathbb{C} \{u^4,u^7+tu^6,u^9,u^{10}\}$, where $t$ is the uniformazing parameter of $\mathcal{O}_D$. Using \textsc{Singular} we compute that the base space of miniversal deformations is irreducible and that $X_0$ is smoothable. Thus $X_t$ is smoothable for each $t$.
We compute that for each $t$ the delta invariant of $X_t$ is $5$ and so $\mu(X_t,0)=10$ (in \cite{BG80} it is wrongly claimed that $\mu(X_t,0) = 12$). Thus, for each $t \in D$ we have $$e(\mathrm{Jac}(X_t,0))-I_{0}(X_t,W_t)=\mu(X_t,0)+m(X_t,0)-1=10+4-1 = 13.$$ However, a computation with \textsc{Singular} reveals that $e(\mathrm{Jac}(X_0))=21$ and $I_{0}(X_0,W_0)=8$, whereas $e(\mathrm{Jac}(X_t,0))=19$ and $I_{0}(X_t,W_t)=6$ for $t \neq 0$. This computation shows that, outside the case of families of complete intersection curves, $e(\mathrm{Jac}(X_t,0))$ is not a Whitney equisingularity invariant in general. So the presence of the correcting term $I_{0}(X_t,W_t)$ to $e(\mathrm{Jac}(X_t,0))$ is necessary for obtaining numerical control of  Whitney equisingularity.}
\end{example}
\textbf{Remark.} Below we give a sketch of a topological proof of the fact that Whitney equisingularity for $(X,0) \rightarrow (Y,0)$ is equivalent to the constancy of $y \mapsto (\mu(X_y,0),m(X_y,0))$ across $(Y,0)$ using
Thm.\ \ref{top. equis.} together with results of L\^e, Teissier, Buchweitz and Greuel.

We say $(X,0) \rightarrow (Y,0)$ is topologically equisingular via an ambient homeomorphism if there exists a homeomorphism $g \colon (\mathbb{C}^{n+k},0) \rightarrow (\mathbb{C}^{n+k},0)$ whose restriction to $X$ induces the homeomorphism $(X,0) \cong (X_{0} \times Y,0)$. By  \cite[Scts.\ 2.2 and 2.3]{LT2}  $(X,0) \rightarrow (Y,0)$ is Whitney equisingular if it is topologically equisingular via an ambient homeomorphism and $y \mapsto m(X_y,0)$ is constant. 

Assume $y \mapsto(\mu(X_y,0),m(X_y,0))$ is constant across $(Y,0)$. By Thm.\ \ref{ZU of Milnor} the origin is the only singular point of $X_y$ for each $y \in Y$. By Thm. \ref{top. equis.} $(X,0) \rightarrow (Y,0)$ is topologically equisingular, which by \cite[Thm.\ 5.2.2, $(6) \Rightarrow (5)$]{BG80} and by \cite[Conj.\ 1]{Gre17} and the remark that follows implies that
 $(X,0) \rightarrow (Y,0)$ is topologically equisingular via an ambient homeomorphism and thus by the results of L\^e and Teissier, the family is Whitney equisingular. 

Suppose $(X,0) \rightarrow (Y,0)$ is Whitney equisingular. Then it is topologically equisingular and $y \mapsto m(X_y,0)$ is constant.  By Thm.\ \ref{top. equis.} $y \mapsto \mu(X_y,0)$ is constant.

\section{Appendix}

Let $h \colon (X,x_0) \rightarrow (Y,y_0)$ be a morphism with equidimensional fibers of positive dimension $d$ between equidimensional complex analytic varieties such that $X$ and the fibers of $h$ are generically reduced, and $Y$ is smooth of dimension $k$. Let $N\subset F$ be coherent $\mathcal{O}_{X}$-modules such that $F$ is free of rank $e$. Suppose $T:=\mathrm{Supp}_X(F/N)$ is finite over $Y$. Let $\mathcal{R}(N)$ be the Rees algebra of $N$ - it is the subalgebra of $\mathrm{Sym}(F)$ generated in degree one by the generators of $N$. 

For each $y \in Y$ denote by $F(y)$ the restriction of $F$ to $X_y$ and by $N(y)$ the image of $N$ in $F(y)$.  As before, define the Buchsbaum--Rim multiplicity $e(N(y),F(y))$ as the normalized leading coefficient of  $\dim_{\mathbb{C}}(F^l(y)/N^{l}(y))$ which is a polynomial of degree $r:=d+e-1$ for $l$ large enough, where $F^{l}(y)$ and $N^{l}(y)$ are the $l$th graded components of $\mathrm{Sym}(F(y))$ and the Rees algebra $\mathcal{R}(N(y))$, respectively. Note that $\mathrm{Projan}(\mathcal{R}(N)) \subset X \times \mathbb{P}^{g(N)-1}$ where $g(N)$ is the cardinality of a generating set for $N$ as an $\mathcal{O}_X$-module. Denote by $\pi \colon  \mathrm{Projan}(\mathcal{R}(N)) \rightarrow X$ the structure morphism. Consider the composition of maps

$$\pi^{-1} (T) \hookrightarrow  X \times \mathbb{P}^{g(N)-1} \xrightarrow{pr_2} \mathbb{P}^{g(N)-1}.$$
As $N$ is generically free of rank $e$, by Kleiman's Transversality Theorem \cite{Kl74}, the intersection of $\pi^{-1} (T)$ with a general plane $H_{r}$ from  $\mathbb{P}^{g(N)-1}$ of codimension $r$ is of dimension at most $\dim Y - 1$. Denote by $\Gamma^{k}(N)$ the projection of $\mathrm{Proj}(\mathcal{R}(N)) \cap H_{r}$ to $X$. This is what Gaffney \cite{GaffP} calls the $k$-{\it dimensional  polar variety} of $N$. For  $y$ in a Zariski open subset $U$ of $Y$, the fiber  of $\Gamma^{k}(N)$ over $y$ consists of the same number of points, each of them appearing with multiplicity one because $X$ is generically reduced, and because locally at each one of them $N$ is free. Denote this number by $\mathrm{deg}_{Y}\Gamma^{k}(N)$. The following is a special case of Gaffney's Multiplicity--Polar Theorem (see \cite{GaffMPT} and \cite[Sct.\ 2]{Rangachev} for generalizations).

\begin{theorem}[Gaffney]\label{MPT} Suppose $X$ is Cohen--Macaulay and $T$ is finite over $Y$. Then for each $y$ in Zariski open subset $U$ in $Y$ we have
$$
e(N(y_0),F(y_0))-
e(N(y),F(y))=\mathrm{deg}_{Y}\Gamma^{k}(N).$$
\end{theorem}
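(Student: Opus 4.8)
The plan is to transplant the proof of Theorem \ref{main} to this relative, higher-dimensional situation: the principal ideal $(g)$ used there is replaced by a determinantal subscheme of $X$ cut out by a generic reduction of $N$, and the one-parameter flatness is replaced by flatness of that subscheme over $Y$. First I would fix generators of $N$ and let $N'\subset N$ be the submodule generated by $r=d+e-1$ generic $\mathbb{C}$-linear combinations of them. Applying Kleiman's transversality theorem to $\mathrm{Projan}(\mathcal{R}(N))\subset X\times\mathbb{P}^{g(N)-1}$ exactly as in the construction of $\Gamma^{k}(N)$, I expect for a generic choice that: (i) $N'(y_0)$ is a reduction of $N(y_0)$ in $F(y_0)$, whence $e(N'(y_0),F(y_0))=e(N(y_0),F(y_0))$ by reduction-invariance of the Buchsbaum--Rim multiplicity; and (ii) $\Gamma^{k}(N)=\mathrm{Supp}_X(N/N')$ is the polar variety, finite over $Y$, meeting $T$ in dimension at most $k-1$, so that $\Gamma^{k}(N)_y\cap T_y=\emptyset$ for $y$ generic.

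The second step is to analyze the subscheme $\mathcal{C}\subset X$ defined by $\mathrm{Fitt}_{0}(F/N')$. Since $N'$ is generated by $r$ elements and $F$ is free of rank $e$, this ideal is generated by the $e\times e$ minors of an $e\times r$ matrix, and $r-e+1=d$, so $\mathcal{C}$ has the expected codimension $d$ of a generic maximal-minors locus; by generic determinantal theory together with the genericity of the combinations (as in Proposition \ref{dual}), $\mathcal{C}$ is Cohen--Macaulay of pure dimension $k$ and generically reduced, and set-theoretically $\mathcal{C}=\Gamma^{k}(N)\cup T$. As $\mathcal{C}\to Y$ is finite with $\mathcal{C}$ Cohen--Macaulay over the smooth base $Y$, it is flat over $Y$. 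Since Fitting ideals commute with base change, $\mathrm{Fitt}_{0}(F/N')\otimes\mathcal{O}_{X_y}=\mathrm{Fitt}_{0}(F(y)/N'(y))$, so the fiber degree
$$\delta(y):=\dim_{\mathbb{C}}\mathcal{O}_{X_y}/\mathrm{Fitt}_{0}(F(y)/N'(y))=\mathrm{deg}_{Y}\mathcal{C}$$
is independent of $y$.

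It then remains to evaluate $\delta$ at the two ends. At the special fiber $N'(y_0)$ is a reduction of $N(y_0)$, so $\mathrm{Supp}(F(y_0)/N'(y_0))=T_{y_0}$ and, $X_{y_0}$ being Cohen--Macaulay, the colength of the maximal-minors ideal computes the Buchsbaum--Rim multiplicity, giving $\delta(y_0)=e(N'(y_0),F(y_0))=e(N(y_0),F(y_0))$. At a generic fiber $\mathcal{C}_y=T_y\sqcup\Gamma^{k}(N)_y$ is a disjoint union: the $\mathrm{deg}_{Y}\Gamma^{k}(N)$ polar points lie off $T$, where $N(y)=F(y)$, and are simple for a generic reduction, each contributing $1$; while at each point $s$ of $T_y$ one has $s\notin\Gamma^{k}(N)$, so $N'(y)_s=N(y)_s$ is a parameter module minimally generated by the $r$ generic sections, and its local colength equals the local multiplicity $e_s(N(y),F(y))$, the sum over $T_y$ being $e(N(y),F(y))$. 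Hence $\delta(y)=e(N(y),F(y))+\mathrm{deg}_{Y}\Gamma^{k}(N)$, and comparing with $\delta(y_0)=e(N(y_0),F(y_0))$ through the constancy of $\delta$ yields the formula.

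The main obstacle is the identification, at both ends, of the flatness-controlled colength of $\mathrm{Fitt}_{0}(F/N')$ with the Buchsbaum--Rim multiplicity; this is the relative, higher-dimensional analogue of Proposition \ref{jacobian ideal} and of the Cohen--Macaulay colength formula of \cite{Buch}. The delicate point is that $N'$ is a reduction of $N$ only on the special fiber: along the polar variety it ceases to be one, so one must verify that over generic $y$ each point of $T_y$ is a generic point of a component of $T$, where $N$ is forced to be a parameter module and the colength genuinely computes the local multiplicity, the remaining degeneracy locus being exactly the simple polar points. Securing all of this---reducedness and Cohen--Macaulayness of $\mathcal{C}$, simplicity of the polar points, and disjointness of $\Gamma^{k}(N)$ from $T$ over generic $y$---is where the genericity furnished by Kleiman's transversality theorem and the reduction-invariance of the Buchsbaum--Rim multiplicity do the real work.
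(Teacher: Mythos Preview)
Your proposal is correct and follows essentially the same route as the paper's own proof: choose $N'\subset N$ generated by $r$ generic combinations so that $N'(y_0)$ is a reduction of $N(y_0)$, show the determinantal locus $T'=\mathbb{V}(\mathrm{Fitt}_0(F/N'))$ is Cohen--Macaulay of dimension $k$ and hence flat over $Y$, and then compare the constant fiber degree at $y_0$ (where it equals $e(N(y_0),F(y_0))$ by \cite{Buch}) with its value at generic $y$ (where $T'_y$ splits as $T_y\sqcup\Gamma^k(N)_y$, contributing $e(N(y),F(y))$ and $\mathrm{deg}_Y\Gamma^k(N)$ respectively). The only minor slip is in your final paragraph: at a point $t_y\in T_y$ with $t_y\notin\Gamma^k(N)$ it is not that $N$ is ``forced to be a parameter module'' but rather that $N'_{t_y}=N_{t_y}$, so the parameter module $N'$ computes $e_{t_y}(N(y),F(y))$ directly via \cite[4.5]{Buch}.
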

\begin{proof} We follow the proof of Theorem \ref{main}. Let $N'$ be a submodule of $N$ generated by $r$ generic linear combinations of generators for $N$ such that $N'(y_0)$ is a reduction of $N(y_0)$. Set  $\mathcal{I}:=\mathrm{Fitt}_{0}(F/N')$. Denote by $T'$ the subspace of $X$ defined by $\mathcal{I}$. Because $\mathrm{Supp}(F(y_0)/N'(y_0))$ is finite and because $T \subset \mathrm{Supp}(F/N')$, then $\dim T' = k$. Since the codimension of $\mathcal{O}_{X}/\mathcal{I}$ is right, then $T'$ is determinantal, and so it is Cohen--Macaulay because $X$ is. Because $Y$ is smooth, then $T' \rightarrow Y$ is flat. Thus $$\dim_{\mathbb{C}}\mathcal{O}_{X_{y_0}}/\mathcal{I}(y_0)=\dim_{\mathbb{C}}\mathcal{O}_{X_{y}}/\mathcal{I}(y)$$ for each $y \in Y$. Also, $h \colon (X,x_0) \rightarrow (Y,y_0)$ is flat, because $X$ is Cohen--Macaulay, $X$ and the fibers of $h$ are equidimensional, and $Y$ is smooth. Thus $X_{y}$ is Cohen--Macaulay for each $y \in Y$. By \cite[4.5, p.\ 223]{Buch} $e(N(y_0),F(y_0))=\dim_{\mathbb{C}}(\mathcal{O}_{X_{y_0}}/\mathcal{I}(y_0))$. It remains to interpret $\dim_{\mathbb{C}}(\mathcal{O}_{X_{y}}/\mathcal{I}(y))$ for generic $y \in Y$.

Set $T'':=\mathrm{Supp}(N/N')$. Note that set-theoretically, $T'=T \cup T''$. The generators of $N'$ give the ideal of the plane $H_r$ in  $\mathbb{P}^{g(N)-1}$ used to define $\Gamma^{k}(N)$. Denote by $U$ the complement in $Y$ of the Zariski closure of $ h \circ \pi (\pi^{-1}(T) \cap H_r)$. Then $T''_{y}= \Gamma^{k}(N)_y$ for $y \in U$. By construction $T_y$ and $T''_{y}$ are disjoint for $y \in U$. Thus $$\dim_{\mathbb{C}}(\mathcal{O}_{X_{y}}/\mathcal{I}(y)) =\sum_{t_y \in T_y} \dim_{\mathbb{C}}(\mathcal{O}_{X_{y},t_y}/\mathcal{I}(y,t_y))+\mathrm{deg}_{Y}\Gamma^{k}(N)$$
where $\mathcal{I}(y,t_y)$ is the image of $\mathcal{I}$ in $\mathcal{O}_{X_{y},t_y}$.
Because the formation of Fitting ideals commutes with base change and because $\mathcal{O}_{X_{y},t_y}$ is Cohen--Macaulay,
\cite[4.5, p.\ 223]{Buch} gives $e(N(y),F(y))=\sum_{t_y \in T_y} \dim_{\mathbb{C}}(\mathcal{O}_{X_{y},t_y}/\mathcal{I}(y,t_y))$. The proof of the theorem is now complete.
\end{proof}

\end{document}